\newtheorem{lemma}{\textbf{Lemma}}[section]
\newtheorem{proposition}{\textbf{Proposition}}[section]
\newtheorem{assumption}{\textbf{Assumption}}[section]
\newtheorem{theorem}{\textbf{Theorem}}[section]
\newtheorem{remark}{Remark}[section]
\begin{document}
	\title{Bootstrap Consistency for Quadratic Forms of Sample Averages with Increasing Dimension}
	\author{Demian Pouzo \thanks{Contact: Dept. of Economics at UC Berkeley. 530 Evans Hall \# 3880. Berkeley, CA 94720. E-mail: dpouzo[at]econ.berkeley.edu. I would like to thank Xiaohong Chen, Noureddine El Karoui, Michael Jansson, Jim Powell and Elie Tamer for comments.}}
	\affil{UC Berkeley}
	\maketitle
	
	\begin{abstract}
		This paper establishes consistency of the weighted bootstrap for quadratic forms $\left( n^{-1/2} \sum_{i=1}^{n} Z_{i,n} \right)^{T}\left( n^{-1/2} \sum_{i=1}^{n} Z_{i,n} \right)$ where $(Z_{i,n})_{i=1}^{n}$ are mean zero, independent $\mathbb{R}^{d}$-valued random variables and $d=d(n)$ is allowed to grow with the sample size $n$, slower than $n^{1/4}$. The proof relies on an adaptation of Lindeberg interpolation technique whereby we simplify the original problem to a Gaussian approximation problem. We apply our bootstrap results to model-specification testing problems when the number of moments is allowed to grow with the sample size. 
	\end{abstract}

\section{Introduction}

Since its introduction by \cite{efron1979} the bootstrap has been widely used as a method for approximating the distribution of statistics. Many papers have extended the original idea in terms, both, of the applicability (see \cite{Horowitz2001} and \cite{Hall1986} for excellent reviews) and of its methodology; of particular interest for us are the bootstrap procedures: \textquotedblleft wild bootstrap" (see \cite{mammen1993}) and more generally the  \textquotedblleft weighted bootstrap" (see \cite{Ma2005}).

In this paper we attempt to expand the applicability of the weighted bootstrap procedure to quadratic forms with increasing dimensions. Namely, we study quadratic forms of the form 
\begin{align}\label{eqn:1-QF}
	\left( \frac{1}{\sqrt{n}} \sum_{i=1}^{n} Z_{i,n} \right)^{T}\left( \frac{1}{\sqrt{n}} \sum_{i=1}^{n} Z_{i,n} \right)
\end{align} where $(Z_{1,n},...,Z_{n,n})$ are independent (among each other) $\mathbb{R}^{d}$-valued random variables with mean zero and general covariance matrix $\Sigma_{n}$. We show that its distribution is well-approximated (under the Kolmogorov distance) by the distribution of
\begin{align}
	\left( \frac{1}{\sqrt{n}} \sum_{i=1}^{n} \omega_{i,n} Z_{i,n} \right)^{T}\left( \frac{1}{\sqrt{n}} \sum_{i=1}^{n} \omega_{i,n} Z_{i,n} \right)
\end{align} where $(\omega_{1,n},...,\omega_{n,n})$ are independent \emph{bootstrap weights}. The novelty in this paper is that we allow for $d=d(n)$ to increase with the sample size.

 Studying the asymptotic behavior of quadratic forms, in particular establishing bootstrap consistency, is relevant since many statistics of interest can asymptotically be represented as quadratic forms of (scaled) sample averages. For instance, the likelihood ratio and Wald test statistics are asymptotically represented as quadratic forms of the scores; see \cite{VdV00} Ch. 16, and references therein. \cite{Portnoy_AOS88} establishes such representations for the likelihood ratio test statistics; there $d(n)$ is the dimension of the parameter of interest and is allowed to grow with $n$.  \cite{HmcKVK_AOS09} uses Portnoy's results to show a quadratic approximation result for Owen's (\cite{Owen-90}) empirical likelihood, allowing for $d(n)^{3}/n \rightarrow 0$; see also \cite{Peng-Schick-12}. 
 Therefore, by establishing the validity of the bootstrap for general quadratic forms, we propose an alternative method for inference for these statistics.

So as to further illustrate the applicability of our results, in Section \ref{sec:MST} we study a concrete application motivated by the work of \cite{DIN_JOE03} who consider model-specification tests for models defined by a \emph{diverging} number of moment conditions (this quantity determines our $d(n)$). By applying our results, we establish bootstrap consistency results for the distribution of the model-specification test statistics of two ubiquitous estimators in econometrics and statistics: The generalized empirical likelihood (GEL; \cite{Smith-EJ97}) estimator and The generalized method of moments (GMM; \cite{Hansen-ECMA82}) estimator. By employing our bootstrap result we are able to perform inference for \emph{non-optimally weighted} GMM estimators. To our knowledge these results are new.

By letting $d$ to increase with sample size in our general theory, we allow for different asymptotics, a  \textquotedblleft large-$d$ and large-$n$" asymptotics, rather than the standard  \textquotedblleft fixed-$d$ and large-$n$". The former type of asymptotics are more explicit about how the dimension, $d$, can affect the quality of the approximations. That is, even if the dimension does not literally grow with $n$, if, for instance, the model has a large number of parameters (or moment conditions as in our application), doing  \textquotedblleft fixed-$d$ and large-$n$" asymptotics could be misleading, whereas doing  \textquotedblleft large-$d$ and large-$n$" asymptotics could depict a more accurate picture of the behavior for fixed samples; see \cite{Mammen_AOS89} for discussion. Our results can also be applied in cases where there is literally a growing number of parameters. For instance, \cite{ChenPouzo14} study the asymptotic behavior of the quasi-likelihood ratio and Wald test statistics in a semi-parametric conditional moment setup; in particular they show that the statistics are asymptotically equivalent to quadratic forms (\ref{eqn:1-QF}) under a null hypothesis of increasing dimensions (see Appendix A.4 in their paper); our results, in conjunction with theirs, could be applied to establish bootstrap-based inference for the quasi-likelihood ratio and Wald test statistics.\footnote{In Section \ref{sec:MST} we provide more concrete examples of these two cases in the context of our application.}


In order to establish our main result of bootstrap consistency, we use Lindeberg interpolation techniques (see \cite{Chatterjee_AOP06}, \cite{Rollin2013} and references therein) to approximate the quadratic forms of $n^{-1/2}\sum_{i=1}^{n} \omega_{i,n} Z_{i,n}$  and $n^{-1/2}\sum_{i=1}^{n} Z_{i,n}$ by the ones for  Gaussian random variables with zero mean and covariance $n^{-1}\sum_{i=1}^{n} Z_{i,n}Z_{i,n}^{T}$ and  $E[Z_{1,n}Z_{1,n}^{T}]$, respectively. 
		
		By proceeding in this manner, we are able to reduce the original problem to a Gaussian approximation problem wherein we need to establish convergence of a Gaussian distribution with zero mean and variance $n^{-1} \sum_{i=1}^{n} Z_{i,n}Z_{i,n}^{T}$ to one with zero mean and variance $E[Z_{1,n}Z_{1,n}^{T}]$. We use Slepian interpolation (\cite{Slepian1962}, \cite{Rollin2013}, \cite{CCK2013} and references therein) to accomplish this. 
		
		Due to the interpolation techniques used here, we need certain restrictions on the higher moments of the random variables. In particular, we impose growth restrictions on the higher moments of the bootstrap weights and the Euclidean norm of $Z_{1,n}$. These conditions essentially restrict the growth rate of $d(n)$. Although the precise growth rate depends on such conditions, the dimensions cannot grow faster than $n^{1/4}$.

A number of papers develop large sample results allowing for increasing dimension. To name a few, \cite{Portnoy_AOS88} establishes the validity of the Wilks phenomenon for the likelihood ratio for exponential families when $d(n)^{3/2}/n \rightarrow 0$. \cite{He-Shao2000} derive the asymptotic distribution for M-estimators when the number of parameters is allowed to grow with the sample size. Recently, a few papers develop this type of results for quadratic forms of the form (\ref{eqn:1-QF}) allowing for increasing dimensions. In particular, \cite{Peng-Schick-12} and \cite{Xu_Zhang_Wu_14} develop a central limit theorem for quadratic forms of sample averages of vectors, allowing for the dimension to grow with $n$; both papers discuss several applications and examples. The results on our paper offer an alternative, bootstrap-based, method for inference for these cases. 

Our paper also contributes to the growing literature of bootstrap results allowing for increasing dimensions. \cite{Mammen_AOS89} derives asymptotic expansion for M-estimators in linear models allowing for increasing dimension and use them to show consistency of a weighted bootstrap. In a different context, \cite{Radulovic-98} uses Lindeberg interpolation methods allowing for increasing dimension to show that the functional bootstrap CLT holds under weaker conditions than equicontinuity; in his paper the restriction over the growth rate is $d(n)^{6}/n \rightarrow 0$. In \cite{CCK-AOS13}, the authors derive a Gaussian weighted bootstrap approximation result for the \emph{maximum} of the sum of high dimensional random vectors; in this specific setup the dimension is allowed to grow very fast, even at an exponential rate. \cite{Zhang-Cheng-14} provide an extension of \cite{CCK-AOS13} to time series. In our paper the object of interest is the $\ell^{2}$-norm of the sum of high dimensional random vectors (as opposed to the $\ell^{\infty}$-norm), so the results in these papers are not directly applicable. Finally, in a recent independent work, \cite{Spoikony-Zhilova-14} study the validity of the weighted bootstrap procedure for the likelihood ratio test statistics in finite samples and model misspecification; their results require $d(n)^{3}/n$ to be  \textquotedblleft small".

\medskip

\textbf{Organization of the Paper.} In Section \ref{sec:preli} we define the problem and impose the required assumptions. Section \ref{sec:main} presents the main Theorem and a discussion of its implications. Section \ref{sec:MST} presents an application to model-specification tests. Section \ref{sec:simul} presents a numerical simulations. Section \ref{sec:proof-boot} presents the proof of the main Theorem. Section \ref{sec:discussion} presents some concluding remarks.  In order to keep the paper short, the proofs of intermediate results are gathered in the appendix.

\medskip

\textbf{Notation.} For any vector $x \in \mathbb{R}^{d}$, we use $||x||^{p}_{p}$ to denote $\sum_{l=1}^{d} |x_{l}|^{p}$ and $x_{[l]}$ to denote the $l$-th coordinate of the vector. $tr\{ A \}$ denotes the trace of matrix $A$.  We use $E_{P}$ to denote the expectation with respect to the probability measure $P$; for conditional distributions $P(\cdot|X)$ we use $E_{P(\cdot|X)}[\cdot]$ or sometimes directly $E_{P}[\cdot |X]$. We use $ X_{n} \precsim Y_{n}$ to denote that $ X_{n} \leq C Y_{n}$ for some universal $C>0$. We use $\partial^{r} f$ to denote the $r$-th derivative of $f$; for the cases of $r=1$ and $r=2$ we use the more standard $f'$ and $f''$ notation. $wpa1-P$ means  \textquotedblleft with probability approaching one under $P$".

\section{Preliminaries}
\label{sec:preli}

 Let $\{ Z_{i,n} \in \mathbb{R}^{d(n)} : i=1,...,n~and~n \in \mathbb{N} \}$ with $(d(n))_{n\in \mathbb{N}}$ being a non-decreasing integer-valued sequence; $d(n)$ could diverge to infinity. For all $n \in \mathbb{N}$, let $Z^{n} \equiv (Z_{1,n},...,Z_{n,n})$ be independent among themselves with $Z_{i,n} \sim \mathbf{P}_{n}$ and $E_{\mathbf{P}_{n}} [ (Z_{i,n}) ] = 0$ and $\Sigma_{n} \equiv E_{\mathbf{P}_{n}} [ (Z_{i,n}) (Z_{i,n})^{T}  ] \in \mathbb{R}^{d(n)\times d(n)}$ positive definite and finite. Henceforth, we will typically omit the sub-index $n$ in $Z_{i,n}$. 
 
 Let $\mathbb{Z}_{n} \equiv n^{-1} \sum_{i=1}^{n} Z_{i}$, and
 \begin{align*}
 E_{\mathbf{P}_{n}} [ (\sqrt{n} \mathbb{Z}_{n}) ( \sqrt{n} \mathbb{Z}_{n} )^{T}   ] = n^{-1} \sum_{i=1}^{n} E_{\mathbf{P}_{n}} [ (Z_{i}) ( Z_{i} )^{T}   ] = \Sigma_{n}.
 \end{align*}
 
 For a given matrix $A \in \mathbb{R}^{ d \times d }$ we denote its eigenvalues as $\{ \lambda_{1}(A),..., \lambda_{d}(A)\} $.
 
 \begin{assumption} 
 	\label{ass:data-Z}
 (i) There exist constants $0 < c \leq C < \infty$ such that $ c \leq \lambda_{l}(\Sigma_{n}) \leq C$ for any  $l=1,...,d(n)$ and $n \in \mathbb{N}$, and\begin{align*}
 	\frac{     \max \{ d(n) (E_{\mathbf{P}_{n}} [ ||Z_{1,n}||^{3}_{2} ] )^{2} , E_{\mathbf{P}_{n}} [ ||Z_{1}||^{4}_{2} ]  , (d(n))^{4} \} }{n} = o(1);
 \end{align*}
(ii) there exists a $\gamma>0$ such that $\frac{(d(n))^{2+\gamma} }{n^{\gamma}} E_{\mathbf{P}_{n}} [ ||Z_{1}||^{4+2\gamma}_{2} ] = o(1)  $;
(iii) there exists a $\kappa \geq 0$ such that $\frac{(\log(d(n)))^{\kappa/2} d(n)^{2+\kappa} }{ n^{1+\kappa/2} } E_{\mathbf{P}_{n}} [ ||Z_{1}||^{2(2+\kappa)}_{2+\kappa} ] = o(1) $. 
 \end{assumption}

 \subsection{Discussion of the Assumption \ref{ass:data-Z}}

The assumption that $ c \leq \lambda_{l}(\Sigma_{n}) \leq C$  can be somewhat relaxed; for instance, it could be replaced by	$\limsup_{n \rightarrow \infty }\frac{ tr \{ \Sigma^{3}_{n}  \}  }{ ( tr \{ \Sigma^{2}_{n}  \}  )^{3/2}  } =0 $ and $\frac{ tr \{ \Sigma_{n}  \}  }{ tr \{ \Sigma^{2}_{n}  \}   } \leq C < \infty $. The rest of Assumption \ref{ass:data-Z} essentially imposed restrictions on the rate of growth of $d(n)$ relative to $n$. In order to provide sufficient conditions for this part of Assumption \ref{ass:data-Z}, it is convenient to provide bounds in terms of $d(n)$ for the quantities $E_{\mathbf{P}_{n}}[||Z_{1}||_{2}^{q}]$ (for different $q$'s) and $E_{\mathbf{P}_{n}} [ ||Z_{1}||^{2(2+\kappa)}_{2+\kappa} ]$ in the assumption.


Clearly, if $|Z_{[l],1}| \leq C <\infty$ a.s-$\mathbf{P}_{n}$ for all $l = 1,...,d(n)$ and all $n \in \mathbb{N}$, then $E_{\mathbf{P}_{n}}[||Z_{1}||_{2}^{2q}]  = O(d(n)^{q})$ for any $q>0$.\footnote{Recall that for a vector $x$, $x_{[l]}$ denotes the $l$-th component.} For example, such condition is imposed by \cite{Vershynin-JTP12} in the context of estimation and approximation of covariance matrices of high dimensional distributions.

The next lemma shows that the result still holds if we impose the following (milder) restriction:  $E_{\mathbf{P}_{n}} \left[e^{\lambda Z_{[l],1}^{2} } \right] \leq C < \infty $ for some $\lambda>0$. For instance, if $(Z_{[l],1})^{2}$ is a sub-Gamma random variable (\cite{Boucheron-book13} p. 27), then the condition holds since $E_{\mathbf{P}_{n}} \left[e^{\lambda Z_{[l],1}^{2} } \right] \leq \exp \{  \frac{\lambda^{2} v }{2(1-c\lambda)}   \}$ for any $\lambda \in (0,1/c)$ and some $c>0$. If $Z_{[l],1}$ is sub-Gaussian, then $(Z_{[l],1})^{2}$ is sub-exponential (see \cite{Vershynin-12} Lemma 5.14) and the condition holds by the same argument.

An appealing feature of this result is that it only imposes restrictions on the marginal behavior of the components of the vector $Z_{1}$ and not on its joint behavior.

\begin{lemma}
	Suppose that there exists a $C>0$ and $\lambda> 0$ such $E_{\mathbf{P}_{n}} \left[e^{\lambda Z_{[l],1}^{2} } \right] \leq C$ for all $l=1,...,d(n)$ and all $n \in \mathbb{N}$. Then $E_{\mathbf{P}_{n}}[||Z_{1}||_{2}^{2q}] \precsim d(n)^{q}$ for any $q>0$.
\end{lemma}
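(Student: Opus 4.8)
The plan is to bound the $2q$-th moment of the Euclidean norm by reducing it to a uniform bound on the individual coordinate moments $E_{\mathbf{P}_{n}}[Z_{[l],1}^{2q}]$, which will come essentially for free from the exponential moment hypothesis.

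First, suppose $q \geq 1$. Writing $||Z_{1}||_{2}^{2} = \sum_{l=1}^{d(n)} Z_{[l],1}^{2}$ and applying the power-mean (Jensen) inequality to the convex map $t \mapsto t^{q}$ gives
\[
\left( \sum_{l=1}^{d(n)} Z_{[l],1}^{2} \right)^{q} = d(n)^{q} \left( \frac{1}{d(n)} \sum_{l=1}^{d(n)} Z_{[l],1}^{2} \right)^{q} \leq d(n)^{q-1} \sum_{l=1}^{d(n)} Z_{[l],1}^{2q}.
\]
Taking $E_{\mathbf{P}_{n}}$ on both sides reduces the claim to showing that $\sup_{l,n} E_{\mathbf{P}_{n}}[Z_{[l],1}^{2q}]$ is a finite constant depending only on $q$.

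Second, I would extract this uniform moment bound from the hypothesis. Markov's inequality applied to $e^{\lambda Z_{[l],1}^{2}}$ yields the tail bound $\mathbf{P}_{n}( Z_{[l],1}^{2} > t ) \leq C e^{-\lambda t}$ for all $t > 0$, uniformly in $l$ and $n$. The layer-cake (tail-integration) representation then gives
\[
E_{\mathbf{P}_{n}}[ Z_{[l],1}^{2q} ] = \int_{0}^{\infty} q t^{q-1} \mathbf{P}_{n}( Z_{[l],1}^{2} > t ) \, dt \leq C q \int_{0}^{\infty} t^{q-1} e^{-\lambda t} \, dt = C \lambda^{-q} \Gamma(q+1),
\]
which is finite and depends only on $C$, $\lambda$, $q$. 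Combining the two displays yields $E_{\mathbf{P}_{n}}[ ||Z_{1}||_{2}^{2q} ] \leq C \lambda^{-q} \Gamma(q+1) \, d(n)^{q} \precsim d(n)^{q}$.

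Finally, for $0 < q < 1$ the argument is even simpler: by concavity of $t \mapsto t^{q}$ and Jensen's inequality, $E_{\mathbf{P}_{n}}[ ||Z_{1}||_{2}^{2q} ] \leq \left( E_{\mathbf{P}_{n}}[ ||Z_{1}||_{2}^{2} ] \right)^{q} = \left( \sum_{l=1}^{d(n)} E_{\mathbf{P}_{n}}[ Z_{[l],1}^{2} ] \right)^{q} \leq ( C \lambda^{-1} d(n) )^{q} \precsim d(n)^{q}$, where the coordinate second moments are controlled by the $q=1$ instance of the bound above. I do not foresee a genuine obstacle here; the only points requiring care are (a) splitting on whether $q \geq 1$ so that Jensen is applied in the correct direction, and (b) keeping every constant uniform in $l$ and $n$ — which is exactly what the stated hypothesis delivers.
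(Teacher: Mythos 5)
Your proof is correct, but it takes a genuinely different route from the paper's. The paper never reduces to coordinate-wise moments: it applies the tail-integration formula directly to the normalized random variable $||Z_{1}||^{2}_{2}/d(n)$, uses the exponential Markov inequality on that whole sum, and then pushes the expectation through the exponential of the average via Jensen's inequality, $E_{\mathbf{P}_{n}}\bigl[e^{\lambda d(n)^{-1}\sum_{l}|Z_{[l],1}|^{2}}\bigr]\leq d(n)^{-1}\sum_{l}E_{\mathbf{P}_{n}}\bigl[e^{\lambda |Z_{[l],1}|^{2}}\bigr]\leq C$; this handles every $q>0$ in one stroke with no case split. You instead first decouple the coordinates --- via the power-mean inequality $(\sum_{l}a_{l})^{q}\leq d^{q-1}\sum_{l}a_{l}^{q}$ for $q\geq 1$ and concavity for $q<1$ --- and only then invoke the exponential hypothesis, coordinate by coordinate, through the tail bound $\mathbf{P}_{n}(Z_{[l],1}^{2}>t)\leq Ce^{-\lambda t}$. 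Your version is more elementary in that it only ever uses marginal tail bounds and makes the constant $C\lambda^{-q}\Gamma(q+1)$ fully explicit; the paper's version is more compact, avoids the $q\gtrless 1$ dichotomy, and its Jensen step is the one place where the "only marginal restrictions needed" feature emphasized in the surrounding discussion is made visible. Both arguments are complete and yield the same conclusion with constants uniform in $l$ and $n$.
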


\begin{proof}
	Observe that
	\begin{align*}
		E_{\mathbf{P}_{n}}[(||Z_{1}||^{2}_{2}/d(n) )^{q}]  = & \int_{0}^{\infty} \mathbf{P}_{n} \left(  ||Z_{1}||^{2}_{2}/d(n) \geq t^{1/q}   \right) dt \\
		= & q \int_{0}^{\infty} u ^{q-1}\mathbf{P}_{n} \left(  ||Z_{1}||^{2}_{2}/d(n) \geq u   \right) du
	\end{align*}
	since $||Z_{1}||^{2}_{2}/d(n) = d(n)^{-1}\sum_{l=1}^{d(n)} |Z_{[l],1}|^{2}$, by the Markov inequality it follows that for any $\lambda >0$
	\begin{align*}
			E_{\mathbf{P}_{n}}[(||Z_{1}||^{2}_{2}/d(n) )^{q}]  \leq  \left( q \int_{0}^{\infty} u ^{q-1} e^{-\lambda u} du \right) E_{\mathbf{P}_{n}} \left[ e^{ \lambda d(n)^{-1}\sum_{l=1}^{d(n)} |Z_{[l],1}|^{2}} \right] .
	\end{align*}
	By Jensen inequality $E_{\mathbf{P}_{n}} \left[ e^{ \lambda d(n)^{-1}\sum_{l=1}^{d(n)} |Z_{[l],1}|^{2}} \right] \leq d(n)^{-1}\sum_{l=1}^{d(n)}  E_{\mathbf{P}_{n}} \left[ e^{ \lambda |Z_{[l],1}|^{2}} \right] \leq C$. Thus, the desired result follows from the fact that $\left( q \int_{0}^{\infty} u ^{q-1} e^{ - \lambda u} du \right)  = \left( q \lambda^{-q} \int_{0}^{\infty} w ^{q-1} e^{ - w } dw \right)  = q \lambda^{-q}  \Gamma(q) < \infty$ for any $q>0$.
\end{proof}

Under the conditions in the lemma, Assumption \ref{ass:data-Z}(i) boils down to $\frac{d(n)^{4}}{n} = o(1) $. For Assumption \ref{ass:data-Z}(ii) is sufficient to impose $\frac{d(n)^{4+2\gamma}}{n^{\gamma}} = o(1)$; for $\gamma = 2$ it boils down to $\frac{d(n)^{4}}{n} = o(1)$ but for large $\gamma$ it (roughly) becomes $\frac{d(n)^{2}}{n} = o(1)$. Finally, for, say $\kappa = 0$, Assumption \ref{ass:data-Z}(iii) is reduced to $\frac{ d(n)^{2} }{ n } E_{\mathbf{P}_{n}} [ ||Z_{1}||^{4}_{2} ] \precsim  \frac{ d(n)^{4} }{ n } \rightarrow 0$. 

That is, under conditions that bound all (polynomial) moments of the individual components of $Z_{1}$, the dimension is allowed to grow slower than the 4th-root of the sample size.

\subsection{The Bootstrap Weights}

The bootstrap weights are given by $\{
\omega_{in} \in \mathbb{R} : i=1,...,n~and~n \in \mathbb{N} \}$ where, for
any $n \in \mathbb{N}$ and conditional on $Z^{n} = z^{n}$,
$(\omega_{1n},...,\omega_{nn}) \sim \mathbf{P}^{\ast}_{n}(\cdot | z^{n}) $ for some
$\mathbf{P}^{\ast}_{n}(\cdot | z^{n}) $. 


\begin{assumption} 
	\label{ass:boot-w}
 For all $n
  \in \mathbb{N}$ and $i=1,2,...,n$, (i) $(\omega_{1n},...,\omega_{nn})$ are independent and
  $E_{\mathbf{P}^{\ast}_{n}(\cdot | Z^{n})}\left[  \omega_{in} \right] = 0 $
  and $E_{\mathbf{P}^{\ast}_{n}(\cdot | Z^{n})}\left[  (\omega_{in} )^{2} \right] = 1$; (ii) there exists a $q \geq \max\{\gamma+2,4\}$, such that $E_{\mathbf{P}^{\ast}_{n}(\cdot | Z^{n})}\left[  |\omega_{in} |^{q} \right]  \leq C_{w} < \infty$ for some constant $C_{w}>0$.
\end{assumption}

Part (i) is standard. Part (ii) is mild considering that the weights are chosen by the researcher.\footnote{Of course, the technique of proof can be applied to the case where the following (stronger) restriction is imposed: $E_{\mathbf{P}^{\ast}_{n}(\cdot | Z^{n})}\left[  \exp \{ \omega_{in}  \} \right] \leq C_{w} < \infty $.}

\section{The Main Result}
\label{sec:main}

We now present the main result of the paper. In what follows, for any measurable function $z^{n} \mapsto f(z^{n})$ we use $|f(Z^{n})| = o_{\mathbf{P}_{n}}(1)$ to denote: For any $\varepsilon>0$, there exists a $N(\varepsilon)$ such that for all $n \geq N(\varepsilon)$, $\mathbf{P}_{n}( |f(Z^{n}) | \geq \varepsilon  ) < \varepsilon$.

Let $\mathbb{Z}^{\ast}_{n} \equiv n^{-1} \sum_{i=1}^{n} \omega_{in} Z_{i}$ be the bootstrap analog of $\mathbb{Z}_{n}$. 

\begin{theorem}\label{thm:boot}
	Suppose Assumption \ref{ass:data-Z} and \ref{ass:boot-w}  hold. Then\begin{align}\label{eqn:main}
		\sup_{t \in \mathbb{R}} \left| \mathbf{P}^{\ast}_{n} \left( ||\sqrt{n} \mathbb{Z}^{\ast}_{n}||^{2}_{2} \geq t \mid Z^{n}  \right)   -  \mathbf{P}_{n} \left( ||\sqrt{n} \mathbb{Z}_{n}||^{2}_{2} \geq t   \right)     \right| = o_{\mathbf{P}_{n}}(1).
	\end{align}
\end{theorem}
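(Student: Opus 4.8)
The plan is to follow the two-stage interpolation strategy announced in the introduction: first replace both quadratic forms by quadratic forms of Gaussian vectors via a Lindeberg swap, and then compare the two resulting Gaussian quadratic forms via a Slepian-type interpolation plus a Gaussian anti-concentration bound. Throughout, the comparison is done at the level of smooth test functions $g$ applied to $\|\cdot\|_2^2$, and one passes from smooth functions to indicators $\mathbf{1}\{\cdot \geq t\}$ at the very end using anti-concentration of $\|W\|_2^2$ for a Gaussian $W$ with eigenvalues of its covariance bounded in $[c,C]$; that quadratic form has a density bounded uniformly in $d$ (this is where $c\le\lambda_l(\Sigma_n)\le C$, or the relaxed trace conditions, enter), so a $\delta$-smoothing of the indicator costs only $O(\delta)$ in Kolmogorov distance.

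For the first stage I would fix a three-times (or $q$-times) differentiable $g:\mathbb{R}\to\mathbb{R}$ with bounded derivatives and consider $F(x) = g(\|x\|_2^2)$ for $x\in\mathbb{R}^d$; note $F$ has derivatives whose operator norms grow only polynomially in $\|x\|_2$ and in $d$. Working conditionally on $Z^n$, I replace the summands $\omega_{in}Z_i/\sqrt n$ one at a time by independent Gaussians $g_i \sim N(0, n^{-1}Z_iZ_i^T)$, i.e. run Lindeberg interpolation between $n^{-1/2}\sum_i \omega_{in}Z_i$ and a Gaussian with covariance $n^{-1}\sum_i Z_iZ_i^T$. The telescoping error is controlled by third- (or $(2+\gamma)$-th-) order Taylor remainders, each involving $E[|\omega_{in}|^q]\,\|Z_i\|_2^q$ times a bound on $\partial^q F$; summing over $i$ and dividing by the appropriate power of $n$ gives something like $n^{-(q-2)/2}$ times $\max_i\|Z_i\|_2^q$ times a polynomial factor in $d$. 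Taking expectations over $Z^n$ and invoking Assumption \ref{ass:boot-w}(ii) together with the moment-growth bounds in Assumption \ref{ass:data-Z}(ii)--(iii) makes this $o(1)$ in probability; the analogous, actually easier, swap handles $n^{-1/2}\sum_i Z_i$ against a Gaussian with covariance $\Sigma_n$, using Assumption \ref{ass:data-Z}(i). So modulo the smoothing step, it remains to compare $g(\|V_n\|_2^2)$ with $g(\|W_n\|_2^2)$ where $V_n\sim N(0,\hat\Sigma_n)$, $\hat\Sigma_n \equiv n^{-1}\sum_i Z_iZ_i^T$, and $W_n\sim N(0,\Sigma_n)$.

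For the second stage I would run a Gaussian-to-Gaussian interpolation $X_\theta = \sqrt\theta\, V_n + \sqrt{1-\theta}\, W_n$ (with $V_n,W_n$ independent), differentiate $\theta\mapsto E[g(\|X_\theta\|_2^2)]$, and apply Gaussian integration by parts; the derivative is a sum over pairs $(j,k)$ of $(\hat\Sigma_n - \Sigma_n)_{jk}$ times a second-derivative term of $F$, so the whole difference is bounded by $\|g''\|_\infty$ (plus lower-order terms) times a weighted sum of $|(\hat\Sigma_n-\Sigma_n)_{jk}|$, which after Cauchy--Schwarz is controlled by $\|\hat\Sigma_n - \Sigma_n\|_F$ possibly times a factor of $d$. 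One then shows $\|\hat\Sigma_n-\Sigma_n\|_F = o_{\mathbf{P}_n}(1)$ (or that $d$ times it is $o_{\mathbf{P}_n}(1)$): since $E\|\hat\Sigma_n-\Sigma_n\|_F^2 \le n^{-1} E\|Z_1\|_2^4$, Assumption \ref{ass:data-Z}(i) (the $E[\|Z_1\|_2^4]/n = o(1)$ and $d^4/n=o(1)$ pieces) delivers exactly what is needed. Combining the two stages and the smoothing bound, and optimizing $\delta$, yields the supremum over $t$ going to zero in $\mathbf{P}_n$-probability.

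The main obstacle, and where the delicate bookkeeping lives, is the first stage: the test function $F(x)=g(\|x\|_2^2)$ is genuinely high-dimensional, so its derivatives of order up to $q$ carry factors that scale with powers of $\|x\|_2 \asymp \sqrt d$, and one must track exactly how these $d$-powers combine with the moment bounds $E[\|Z_1\|_2^{4+2\gamma}]$, $E[\|Z_1\|_2^{2(2+\kappa)}_{2+\kappa}]$ so that everything collapses to $o(1)$ precisely under Assumptions \ref{ass:data-Z}(ii)--(iii) — this is the reason the theorem needs those two somewhat baroque conditions rather than a single clean rate, and it is the step most likely to hide a subtle constant- or exponent-chasing difficulty (in particular, the choice of smoothing level $\delta$ must be coordinated with the Lindeberg remainder so the $\delta^{-k}$ blow-up in $\|g^{(k)}\|_\infty$ is dominated). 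The Gaussian comparison in the second stage and the anti-concentration step are comparatively standard once the covariance operator-norm bounds in Assumption \ref{ass:data-Z}(i) are in hand.
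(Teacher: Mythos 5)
Your architecture is the same as the paper's: a Lindeberg swap taking $n^{-1/2}\sum_i\omega_{in}Z_i$ and $n^{-1/2}\sum_iZ_i$ to Gaussians with covariances $\hat{\Sigma}_n=n^{-1}\sum_iZ_iZ_i^T$ and $\Sigma_n$ (Theorems \ref{thm:weak-norm-boot}--\ref{thm:weak-norm-ori} via Theorem \ref{thm:LFQ-bound}), a Slepian/Stein interpolation between the two Gaussians (Lemma \ref{lem:slep}), and an anti-concentration argument to pass from smooth test functions back to indicators (Lemmas \ref{lem:Zn-strong-weak}--\ref{lem:anticoncentration-V}). Your second-stage bound via $\|\hat{\Sigma}_n-\Sigma_n\|_F$ and $E\|\hat{\Sigma}_n-\Sigma_n\|_F^2\le n^{-1}E\|Z_1\|_2^4$ is a legitimate (and arguably cleaner) alternative to the paper's $d(n)\max_{j,l}|\hat{\Sigma}_{[j,l]}-\Sigma_{[j,l]}|$ route with its truncation/Hoeffding argument, \emph{provided} the smoothing scale is chosen as in the paper.

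That proviso is where your proposal has a genuine quantitative gap. You assert that $\|W\|_2^2$ for $W\sim N(0,\Sigma_n)$ has a density bounded uniformly in $d$, so that $\delta$-smoothing of the indicator costs $O(\delta)$ and $\delta$ must be taken small (hence the derivative bounds $\|g^{(r)}\|_\infty\asymp\delta^{-r}$ are merely $O(1)$). The paper needs, and proves via a Berry--Esseen bound for $\sum_l\lambda_l(\zeta_l^2-1)$, the much stronger statement that $\|\sqrt{n}\mathbb{V}_n\|_2^2$ puts mass at most $\varepsilon$ on \emph{any} interval of length $\gamma\sqrt{tr\{\Sigma_n^2\}}\asymp\gamma\sqrt{d(n)}$ (Lemma \ref{lem:anticoncentration-V}): the quadratic form has standard deviation $\asymp\sqrt{d}$, so its density is $O(d^{-1/2})$, not merely $O(1)$. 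This lets $\delta\asymp\sqrt{d}$ and hence $h\asymp\sqrt{d}$, so the test functions satisfy $|\partial^rf|\precsim d^{-r/2}$. That extra decay is not a luxury: in the Gaussian comparison the Stein-identity derivative carries a factor $\sum_{j,l}E|\partial_{jl}f|\precsim h^{-2}d\cdot tr\{\Sigma_n\}+h^{-1}d$, which is $O(d)$ when $h\asymp\sqrt{d}$ but $O(d^2)$ when $h=O(1)$. With $h=O(1)$ you would need $d^2\max_{j,l}|\hat{\Sigma}_{[j,l]}-\Sigma_{[j,l]}|=o_{\mathbf{P}_n}(1)$, or in your Frobenius version $d\,\|\hat{\Sigma}_n-\Sigma_n\|_F=o_{\mathbf{P}_n}(1)$, i.e.\ $d^2E\|Z_1\|_2^4/n\to0$; since $E\|Z_1\|_2^4\ge(tr\{\Sigma_n\})^2\asymp d^2$ this is not implied by Assumption \ref{ass:data-Z} in general. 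So the "coordination of $\delta$ with the Lindeberg remainder" that you flag as delicate is in fact resolved only by the $\sqrt{tr\{\Sigma_n^2\}}$-scale anti-concentration; as written, your smoothing step would leave the second stage (and parts of the remainder bookkeeping) unprovable under the stated moment conditions.
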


\subsection{Comments and discussion}

We now present some remarks and discuss some implications of the preceding Theorem.

\bigskip

\textbf{Heuristics.} We postpone the somewhat long proof of the Theorem to Section \ref{sec:proof-boot}; here we present an heuristic argument. The first step of the proof is to apply Lindeberg interpolation techniques (see \cite{Chatterjee_AOP06} and \cite{Rollin2013} and references therein) to approximate $\sqrt{n} \mathbb{Z}^{\ast}_{n}$ by $\sqrt{n} \mathbb{U}_{n}$ and $\sqrt{n} \mathbb{Z}_{n}$ by $\sqrt{n} \mathbb{V}_{n}$,  where $\mathbb{U}_{n}$ and $\mathbb{V}_{n}$ are Gaussian random variables with zero mean and covariances $n^{-1} \sum_{i=1}^{n} Z_{i}Z_{i}^{T}$ and $E[Z_{1,n}Z_{1,n}^{T}]$  respectively. 

In order to do this, we first approximate the indicator function $x \mapsto 1\{ ||x||^{2}_{2}  \geq t \}$ by  \textquotedblleft smooth" functions $x \mapsto \mathcal{P}_{t,\delta,h}(||x||^{2}_{2})$; the exact expression for $\mathcal{P}_{t,\delta,h}$ is presented in Lemma \ref{lem:Zn-strong-weak} and follows from the suggestion by \cite{Pollard_01} p. 247. The functions are indexed by $(h,\delta)$ where $h$ is \textquotedblleft small'' compared to $\delta$, and the \textquotedblleft smaller" $\delta$ is, the closer the function $\mathcal{P}_{t,\delta,h}$ is to the indicator function; see Lemmas \ref{lem:Zn-strong-weak}, \ref{lem:Vn-strong-weak} and \ref{lem:ZZ-strong-weak}  in the Appendix \ref{app:lemmas-main}.  It is worth to note that  what we mean by $\delta$ to be ``small'' depends on how $||\sqrt{n}\mathbb{V}_{n}||^{2}_{2}$ concentrates mass. Lemma \ref{lem:anticoncentration-V} in the Appendix \ref{app:lemmas-main} establishes an anti-concentration result, wherein we obtain that this random variable puts very little mass in any given interval. Therefore $\delta$ could actually be quite large, of the order of $\sqrt{tr\{ \Sigma^{2}_{n} \}}$.

Second, since $x \mapsto \mathcal{P}_{t,\delta,h}(||x||^{2}_{2})$ belongs to a class of  \textquotedblleft smooth" functions, we show that it suffices to show consistency under the weak norm (as opposed to the norm implied in \ref{eqn:main}).\footnote{The formal definition of the norm is presented in Equation \ref{eqn:WN} in Section \ref{sec:proof-boot}.} This is done in  Lemmas \ref{lem:Delta-P-Pr} and \ref{lem:Delta-Pwz-Pr}. The relevant class of  \textquotedblleft smooth" functions is given by $\mathcal{C}_{M}$, which is the class of functions $f : \mathbb{R} \rightarrow \mathbb{R}$ that are three times continuously differentiable and $ \sup_{x} | \partial^{r} f(x)| \leq (M)^{r}$ and $\sup_{x} |f(x)| \leq 1$.

The following Theorems formalize the aforementioned approximation of $\sqrt{n} \mathbb{Z}^{\ast}_{n}$ by $\sqrt{n} \mathbb{U}_{n}$ and $\sqrt{n} \mathbb{Z}_{n}$ by $\sqrt{n} \mathbb{V}_{n}$ and can be viewed of independent interest since they show that a  \textquotedblleft generalized invariance principle" holds in our setup. Henceforth, we use $\boldsymbol{\Phi}^{\ast}_{n}(\cdot|Z^{n})$ and $\boldsymbol{\Phi}_{n}$ respectively, to denote their probability distributions.


\begin{theorem}\label{thm:weak-norm-boot} 
	Suppose Assumption \ref{ass:data-Z} and \ref{ass:boot-w} hold. For any $h > 0$,
	\begin{align*}
		\sup_{f \in \mathcal{C}_{h^{-1}}} \left|  E_{\mathbf{P}^{\ast}_{n}} \left[ f \left( || \sqrt{n} \mathbb{Z}^{\ast}_{n}||^{2}_{2}  \right)  |Z^{n} \right] - E_{\boldsymbol{\Phi}^{\ast}_{n}} \left[ f \left( || \sqrt{n} \mathbb{U}_{n} ||^{2}_{2} \right)  | Z^{n} \right]  \right| = o_{\mathbf{P}_{n}}(h^{-2}).
	\end{align*}
\end{theorem}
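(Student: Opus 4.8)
Throughout, condition on $Z^{n}$, abbreviate $\widehat{\Sigma}_{n}:=n^{-1}\sum_{i=1}^{n}Z_{i}Z_{i}^{T}$, and write $g_{f}(x):=f(\|x\|_{2}^{2})$. The starting point is the observation that $\sqrt{n}\,\mathbb{Z}^{\ast}_{n}=n^{-1/2}\sum_{i=1}^{n}\omega_{in}Z_{i}$ is a sum of conditionally independent, conditionally centered summands with conditional covariances $n^{-1}Z_{i}Z_{i}^{T}$, hence with total conditional covariance $\widehat{\Sigma}_{n}$; and $\sqrt{n}\,\mathbb{U}_{n}$, being $N(0,\widehat{\Sigma}_{n})$ given $Z^{n}$, admits the same representation, namely it is equal in conditional law to $n^{-1/2}\sum_{i=1}^{n}\nu_{i}Z_{i}$ with $(\nu_{i})_{i}$ i.i.d.\ $N(0,1)$ and independent of $(\omega_{in})_{i}$. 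So the claim is a Lindeberg-type statement comparing two arrays of conditionally independent summands with matching first two conditional moments, and the plan is to run the Lindeberg swap, replacing $\omega_{in}$ by $\nu_{i}$ one index at a time. Setting $S_{k}:=n^{-1/2}\big(\sum_{i\le k}\nu_{i}Z_{i}+\sum_{i>k}\omega_{in}Z_{i}\big)$, so that $S_{0}=\sqrt{n}\,\mathbb{Z}^{\ast}_{n}$ and $S_{n}\stackrel{d}{=}\sqrt{n}\,\mathbb{U}_{n}$ conditionally, it suffices to bound $\sum_{k=1}^{n}\big|E_{\mathbf{P}^{\ast}_{n}}[g_{f}(T_{k}+\widetilde W_{k})-g_{f}(T_{k}+W_{k})\mid Z^{n}]\big|$ uniformly over $f\in\mathcal{C}_{h^{-1}}$, where $W_{k}:=n^{-1/2}\omega_{kn}Z_{k}$, $\widetilde W_{k}:=n^{-1/2}\nu_{k}Z_{k}$, and $T_{k}$ collects the remaining terms and is conditionally independent of $(W_{k},\widetilde W_{k})$.

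\textbf{Per-swap expansion.} For each $k$ I would Taylor expand to third order, exploiting the structure $g_{f}(T_{k}+W)=f\big(\|T_{k}\|_{2}^{2}+2T_{k}^{T}W+\|W\|_{2}^{2}\big)$: with $a_{k}:=\|T_{k}\|_{2}^{2}$ and $b_{k}(W):=2T_{k}^{T}W+\|W\|_{2}^{2}$,
\begin{align*}
	g_{f}(T_{k}+W)=f(a_{k})+f'(a_{k})\,b_{k}(W)+\tfrac12 f''(a_{k})\,b_{k}(W)^{2}+\tfrac16 f'''(\zeta)\,b_{k}(W)^{3}
\end{align*}
for some $\zeta$ between $a_{k}$ and $a_{k}+b_{k}(W)$, with $|f^{(r)}|\le h^{-r}$. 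Taking conditional expectations and using $E[W_{k}\mid Z^{n}]=E[\widetilde W_{k}\mid Z^{n}]=0$ and $E[W_{k}W_{k}^{T}\mid Z^{n}]=E[\widetilde W_{k}\widetilde W_{k}^{T}\mid Z^{n}]=n^{-1}Z_{k}Z_{k}^{T}$, the $f(a_{k})$, $f'(a_{k})b_{k}$, and leading part of $\tfrac12 f''(a_{k})b_{k}^{2}$ cancel between $W_{k}$ and $\widetilde W_{k}$. The residual from the quadratic term is the skewness/kurtosis mismatch $\tfrac12 f''(a_{k})\big(E[b_{k}(\widetilde W_{k})^{2}\mid T_{k}]-E[b_{k}(W_{k})^{2}\mid T_{k}]\big)$, a linear combination of $n^{-3/2}E[\omega_{kn}^{3}\mid Z^{n}]\,(T_{k}^{T}Z_{k})\|Z_{k}\|_{2}^{2}$ and $n^{-2}(E[\omega_{kn}^{4}\mid Z^{n}]-3)\|Z_{k}\|_{2}^{4}$; the per-swap error is therefore bounded by this plus $\tfrac16 h^{-3}\big(E[|b_{k}(W_{k})|^{3}\mid Z^{n}]+E[|b_{k}(\widetilde W_{k})|^{3}\mid Z^{n}]\big)$.

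\textbf{Moment bounds and summation.} The dimension-sensitive step is controlling the only ``unbounded'' quantity in these residuals, $T_{k}^{T}Z_{k}$. I would use the eigenvalue bound in Assumption~\ref{ass:data-Z}(i): on the event $E_{n}:=\{\|\widehat{\Sigma}_{n}\|_{op}\le 2C,\ n^{-1}\sum_{i}\|Z_{i}\|_{2}^{p}\le 2E_{\mathbf{P}_{n}}[\|Z_{1}\|_{2}^{p}]\text{ for }p=3,4\}$, which satisfies $\mathbf{P}_{n}(E_{n})\to1$ by Markov's inequality and Assumption~\ref{ass:data-Z}, one has $\mathrm{Cov}(T_{k}\mid Z^{n})\preceq\widehat{\Sigma}_{n}$, hence $E[(T_{k}^{T}Z_{k})^{2}\mid Z^{n}]=Z_{k}^{T}\mathrm{Cov}(T_{k}\mid Z^{n})Z_{k}\precsim\|Z_{k}\|_{2}^{2}$ — with \emph{no} factor of $d(n)$ — and, handling the fourth moment in the same way (its non-Gaussian correction being $O(n^{-1}E_{\mathbf{P}_{n}}[\|Z_{1}\|_{2}^{4}])=o(1)$), $E[(T_{k}^{T}Z_{k})^{4}\mid Z^{n}]\precsim\|Z_{k}\|_{2}^{4}$, so $E[|T_{k}^{T}Z_{k}|^{3}\mid Z^{n}]\precsim\|Z_{k}\|_{2}^{3}$. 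Combining with the weight-moment bounds of Assumption~\ref{ass:boot-w}(ii) and $E[|\nu_{k}|^{m}]\precsim1$ (possibly after truncating $\{|\omega_{kn}|\le M_{n}\}$ to reduce the higher powers of $\omega_{kn}$ arising in $b_{k}(W_{k})^{3}$ to moments covered by $q$), the per-swap error on $E_{n}$ is $\precsim h^{-3}n^{-3/2}\|Z_{k}\|_{2}^{3}+h^{-2}n^{-3/2}\|Z_{k}\|_{2}^{3}+h^{-2}n^{-2}\|Z_{k}\|_{2}^{4}+(\text{lower order in }n)$. Summing over $k$ and bounding the empirical moments by their wpa1 bounds gives a total of order $h^{-3}n^{-1/2}E_{\mathbf{P}_{n}}[\|Z_{1}\|_{2}^{3}]+h^{-2}n^{-1}E_{\mathbf{P}_{n}}[\|Z_{1}\|_{2}^{4}]+(\text{lower order})$, which is $o(h^{-2})$ because Assumption~\ref{ass:data-Z}(i) yields both $n^{-1}E_{\mathbf{P}_{n}}[\|Z_{1}\|_{2}^{4}]=o(1)$ and $n^{-1/2}E_{\mathbf{P}_{n}}[\|Z_{1}\|_{2}^{3}]=o(1)$, while Assumption~\ref{ass:data-Z}(ii)--(iii) absorb the remaining terms. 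Since every bound depends on $f$ only through $\sup_{x}|\partial^{r}f(x)|\le h^{-r}$ and $\sup_{x}|f(x)|\le1$, the estimate is uniform over $\mathcal{C}_{h^{-1}}$; as it holds on $E_{n}$ with $\mathbf{P}_{n}(E_{n})\to1$, it is $o_{\mathbf{P}_{n}}(h^{-2})$.

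\textbf{Main obstacle.} The crux will be that $g_{f}(x)=f(\|x\|_{2}^{2})$ does \emph{not} have uniformly bounded third derivatives — they grow like $\|x\|_{2}^{3}h^{-3}$ — so the classical Lindeberg bound does not apply directly. The remedy, and the delicate bookkeeping, is threefold: (a) observe that the offending direction in the third-order remainder is always aligned with $T_{k}$, so that the operator-norm bound on $\Sigma_{n}$ turns $E[(T_{k}^{T}Z_{k})^{2}\mid Z^{n}]$ into $O(\|Z_{k}\|_{2}^{2})$ rather than $O(\|Z_{k}\|_{2}^{2}\,d(n))$; (b) track precisely which moments of the weights and of $\|Z_{1}\|_{2}$ enter each remainder term, so that the growth conditions of Assumption~\ref{ass:data-Z} (and the choice of $q$ in Assumption~\ref{ass:boot-w}) are exactly what is needed; and (c) because everything is conditional on $Z^{n}$, carry out the accounting on a high-probability event on which $\widehat{\Sigma}_{n}$ and the empirical moments of $\|Z_{i}\|_{2}$ are close to their population counterparts, which is what produces the $o_{\mathbf{P}_{n}}$ conclusion.
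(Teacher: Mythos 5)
Your proposal follows essentially the same route as the paper's: a term-by-term Lindeberg swap of the bootstrap weights against i.i.d.\ $N(0,1)$ weights (this is exactly Theorem \ref{thm:LFQ-bound} and Lemma \ref{lem:LFQ-bound} applied with $A_{i}=n^{-1/2}\omega_{in}Z_{i}$ and $B_{i}=n^{-1/2}u_{i}Z_{i}$), Taylor expansion of $f$ in the scalar increment $2T_{k}^{T}W+\|W\|_{2}^{2}$, cancellation of the matched first and second conditional moments, and control of the residual moment mismatches and remainder on a high-probability event in $Z^{n}$ followed by Markov's inequality. The only substantive divergences are technical: the paper bounds $E[(\mathbb{S}_{i:n}^{T}B_{i})^{2}]$ by $\|b_{i}\|_{2}^{2}\,tr\{\hat{\Sigma}_{n}\}$ (picking up a factor $d(n)$ that Assumption \ref{ass:data-Z}(ii) is designed to absorb) and uses a fractional remainder of order $2+\gamma$ via a Rosenthal-type inequality, whereas your sharper operator-norm bound on $\mathrm{Cov}(T_{k}\mid Z^{n})$ is legitimate, but your full third-order remainder brings $E[\|W_{k}\|_{2}^{6}]$ and hence $E[|\omega_{kn}|^{6}]$ into the bound, which Assumption \ref{ass:boot-w}(ii) does not supply when $q<6$ --- the weight truncation you gesture at is the right fix but would need to be carried out.
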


\begin{proof}
	See Appendix \ref{app:lindeberg}.
\end{proof} 

\begin{theorem}\label{thm:weak-norm-ori} 
	Suppose Assumption \ref{ass:data-Z} and \ref{ass:boot-w} hold. For any $h > 0$,
	\begin{align*}
	\sup_{f \in \mathcal{C}_{h^{-1}}} \left|  E_{\mathbf{P}_{n}} \left[ f \left( || \sqrt{n} \mathbb{Z}_{n}||^{2}_{2}  \right)\right] - E_{\boldsymbol{\Phi}_{n}} \left[ f \left( || \sqrt{n} \mathbb{V}_{n} ||^{2}_{2} \right)  \right]  \right| = o(h^{-2}).
	\end{align*}
\end{theorem}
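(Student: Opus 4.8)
\textbf{Proof proposal for Theorem \ref{thm:weak-norm-ori}.}

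The plan is to run a standard Lindeberg interpolation argument in the single-summand-replacement form, exactly parallel to the bootstrap version in Theorem \ref{thm:weak-norm-boot} but now in the non-random (unconditional) setting, so the $o_{\mathbf{P}_n}$ becomes a deterministic $o(\cdot)$. First I would fix $f \in \mathcal{C}_{h^{-1}}$ and write $g(x) = f(\|x\|_2^2)$ as a function on $\mathbb{R}^d$; the chain rule gives $\partial^r g$ in terms of $\partial^j f$ and polynomials in the coordinates of $x$, so that the first three derivatives of $g$ are controlled by $h^{-1}, h^{-2}, h^{-3}$ times powers of $\|x\|_2$ (the quadratic $\|x\|_2^2$ inside $f$ is what makes the $h^{-3}$-term carry a cubic moment of $\|Z_1\|_2$). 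Next I would introduce the hybrid vectors $W_k = n^{-1/2}\big(\sum_{i\le k-1} V_i + \sum_{i\ge k+1} Z_i\big)$, where $V_1,\dots,V_n$ are i.i.d.\ $N(0,\Sigma_n)$ independent of the $Z_i$, so that the telescoping sum
\begin{align*}
E_{\mathbf{P}_n}\!\big[g(\sqrt n\,\mathbb{Z}_n)\big] - E_{\boldsymbol{\Phi}_n}\!\big[g(\sqrt n\,\mathbb{V}_n)\big] = \sum_{k=1}^n \Big( E\big[g(W_k + n^{-1/2}Z_k)\big] - E\big[g(W_k + n^{-1/2}V_k)\big]\Big)
\end{align*}
reduces everything to bounding each term.

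For a fixed $k$, I would Taylor-expand $g$ around $W_k$ to second order, so the zeroth- and first-order terms cancel across the $Z_k$ / $V_k$ comparison (using independence of $W_k$ from both, $E[Z_k]=E[V_k]=0$), and the second-order terms cancel as well because $\mathrm{Cov}(Z_k)=\mathrm{Cov}(V_k)=\Sigma_n$ and $g$'s Hessian at $W_k$ is independent of the increment. What remains is a third-order remainder of the form $n^{-3/2}$ times an expected product of three increment-coordinates against $\partial^3 g$ evaluated at an intermediate point; bounding $|\partial^3 g|$ by $h^{-3}$ times (at most cubic) powers of the Euclidean norm, and using $E_{\mathbf{P}_n}[\|Z_1\|_2^3]$, $E_{\mathbf{P}_n}[\|Z_1\|_2^4]$ (and the matching Gaussian moments, which are controlled by $\mathrm{tr}\,\Sigma_n \le Cd(n)$ via Assumption \ref{ass:data-Z}(i)), each summand is $O\big(h^{-3} n^{-3/2}\,\mathrm{poly}(d(n))\cdot E_{\mathbf{P}_n}[\|Z_1\|_2^3]\big)$ or similar. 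Summing over $k$ kills one power of $n$, leaving a bound of order $h^{-3} n^{-1/2}\,\mathrm{poly}(d(n))\,E_{\mathbf{P}_n}[\|Z_1\|_2^3]$, and I would then check that Assumption \ref{ass:data-Z}(i)--(ii) forces this to be $o(h^{-2})$ (the extra factor $h^{-1}$ is harmless because the conclusion is stated as $o(h^{-2})$, i.e.\ the bound may depend on $h$; one needs $n^{-1/2}\,\mathrm{poly}(d(n))\,E_{\mathbf{P}_n}[\|Z_1\|_2^3] = o(1)$, which is precisely the $\max\{d(n)(E[\|Z_1\|_2^3])^2,\dots\}/n = o(1)$ clause). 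Since the bound is uniform over $f \in \mathcal{C}_{h^{-1}}$ (it depends only on $M=h^{-1}$ and moments of $Z_1$), the supremum is handled automatically.

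The main obstacle I anticipate is the bookkeeping in the third-order remainder: because the argument of $f$ is $\|x\|_2^2$ rather than $x$ itself, $\partial^3 g$ involves terms like $\partial^3 f \cdot x_{[a]}x_{[b]}x_{[c]}$, $\partial^2 f \cdot x_{[a]}\delta_{bc}$, and $\partial f$ contributions, and when contracted against three increment coordinates and summed over coordinate indices these generate factors that must be traded against $\|W_k\|_2$ (whose moments, in turn, need a crude bound like $E[\|\sqrt n\,\mathbb{Z}_n\|_2^{\text{const}}] \lesssim \mathrm{poly}(d(n))$, obtainable from Assumption \ref{ass:data-Z}(i)-(ii) via Rosenthal-type inequalities). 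Keeping track of exactly which moment of $\|Z_1\|_2$ and which power of $d(n)$ appears — and verifying that the accumulated exponents are all absorbed by Assumption \ref{ass:data-Z} — is the delicate part; everything else is the routine Lindeberg telescope. (In fact this is the deterministic shadow of Theorem \ref{thm:weak-norm-boot}: one can almost quote that proof verbatim with $\omega_{in}\equiv 1$ and $V_i$ in place of $\mathbb{U}_n$'s building blocks, noting that here the target covariance $\Sigma_n$ is non-random so no conditioning on $Z^n$ and no $wpa1$ caveats are needed.)
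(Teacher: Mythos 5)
Your overall strategy (Gaussianize via a one-summand-at-a-time Lindeberg telescope with i.i.d.\ $N(0,\Sigma_n)$ surrogates $V_i$) is the same as the paper's, but you execute the key step differently. The paper does \emph{not} Taylor-expand the multivariate map $g=f(\|\cdot\|_2^2)$ in the vector increment; instead (Theorem \ref{thm:LFQ-bound}) it writes $\|\mathbb{S}_{i:n}+B_i\|_2^2=\|\mathbb{S}_{i:n}\|_2^2+2\mathbb{S}_{i:n}^TB_i+\|B_i\|_2^2$ and expands the \emph{univariate} $f$ in the scalar increment. With that expansion the second-order terms do not fully cancel: the square of the scalar increment produces $\|B_i\|_2^4$ and $\mathbb{S}_{i:n}^TB_i\|B_i\|_2^2$, whose expectations are not matched by matching the vectors' first two moments, leaving the residual terms $\mathbf{S}_{1,n},\mathbf{S}_{2,n}$ that must be bounded separately via Lemma \ref{lem:LFQ-bound}. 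Your multivariate expansion is cleaner at second order — your cancellation claim is correct, since $E[u^T\nabla^2g(W_k)u]=\mathrm{tr}\{E[\nabla^2g(W_k)]E[uu^T]\}$ by independence and the covariances match — so in your scheme only the third-order remainder survives. That is a genuine simplification relative to the paper at that stage.

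The gap is in the moment bookkeeping for that remainder, and it is not just "delicate," it fails under the stated assumptions. Writing $u=n^{-1/2}Z_k$ and $\xi=W_k+\theta u$, the contracted third derivative is $8\,\partial^3f(\|\xi\|_2^2)(\xi^Tu)^3+12\,\partial^2f(\|\xi\|_2^2)(\xi^Tu)\|u\|_2^2$, and since $\xi^Tu=W_k^Tu+\theta\|u\|_2^2$, the cube necessarily contains $\|u\|_2^6=n^{-3}\|Z_k\|_2^6$ (and cross terms needing $E\|Z_1\|_2^5$). So a genuine third-order remainder costs $n^{-2}E_{\mathbf{P}_n}[\|Z_1\|_2^6]$ after summing over $k$ — not just the third and fourth moments you invoke — and Assumption \ref{ass:data-Z} only controls $E_{\mathbf{P}_n}[\|Z_1\|_2^{4+2\gamma}]$ for \emph{some} $\gamma>0$, which for $\gamma<1$ does not even guarantee finiteness of the sixth moment. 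This is exactly why the paper stops the Taylor expansion at second order and bounds the remainder at fractional order $2+q$ with $q=\gamma$ (the $L_2(f)^{1-q}L_3(f)^q|\Delta|^{2+q}$ interpolation feeding into $\mathbf{R}_n$, then controlled by the Rosenthal/Johnson--Schechtman--Zinn bound and Assumption \ref{ass:data-Z}(ii)). Your argument can be repaired the same way — replace the exact third-order remainder by the interpolated $(2+\gamma)$-order one — but as written the step "each summand is $O(h^{-3}n^{-3/2}\mathrm{poly}(d)E[\|Z_1\|_2^3])$" is not justified. (Your side remark that the extra $h^{-1}$ is harmless is fine, since in the application $h=h_n\asymp\sqrt{\mathrm{tr}\{\Sigma_n^2\}}\to\infty$.)
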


\begin{proof}
	See Appendix \ref{app:lindeberg}.
\end{proof} 

By using Theorems \ref{thm:weak-norm-boot} and \ref{thm:weak-norm-ori} we have reduced the original problem to a Gaussian approximation problem. That is, we need to establish convergence (under the distance induced by $\mathcal{C}$) of a Gaussian distribution with zero mean and variance $n^{-1} \sum_{i=1}^{n} Z_{i}Z_{i}^{T}$ to one with zero mean and variance $E[Z_{1}Z_{1}^{T}]$. Lemma \ref{lem:slep} in Section \ref{sec:proof-boot} --- which is based in the Slepian interpolation (see \cite{CCK-AOS13}, \cite{CCK2013} and \cite{Rollin2013} and references therein)--- establishes that is enough to show that \begin{align}\label{eqn:LLN-cov}
 d(n) \max_{1 \leq j,l \leq d(n)} \left| n^{-1} \sum_{i=1}^{n} Z_{[j],i} Z_{[l],i} - E_{\mathbf{P}_{n}}[Z_{[j],1} Z_{[l],1}]  \right| = o_{\mathbf{P}_{n}}(1).
\end{align}
In Section \ref{sec:proof-boot}, we show that, employing standard arguments, the expression \ref{eqn:LLN-cov} holds under our assumptions. A similar result is obtained by \cite{CCK-AOS13} without the scaling factor of $d(n)$; their setup, however, is different since the object of interest is $\max_{1\leq j \leq d(n)} |n^{-1/2} \sum_{i=1}^{n} Z_{[j],i}|$ (as opposed to $||n^{-1/2} \sum_{i=1}^{n} Z_{i}||^{2}_{2}$). \footnote{An important consequence of this difference is that, as opposed to our case, \cite{CCK-AOS13} can use a \textquotedblleft smooth maximum function" to approximate their quantity of interest; the approximation error is only of order $\log d$. This, allows them to obtain faster rates for the approximation of the indicator functions with smooth functions. This, in turn, translates into a faster overall rate of convergence ---  $d = o(\exp(n))$ in their case. See \cite{Wasserman_2014} for a discussion and a nice review of these results.}

\bigskip

\textbf{Asymptotic Distribution of $||\sqrt{n} \mathbb{Z}_{n}||^{2}_{2}$.} An implication of the proof of Theorem \ref{thm:boot} and Theorem \ref{thm:weak-norm-ori} is that
\begin{align}\label{eqn:asym-Gauss}
\sup_{t \in \mathbb{R}} \left| \mathbf{P}_{n} \left( \frac{ ||\sqrt{n} \mathbb{Z}_{n}||^{2}_{2} - d(n) }{\sqrt{d(n)}} \geq t   \right)   -  \boldsymbol{\Phi}_{n} \left( \frac{ ||\sqrt{n} \mathbb{V}_{n}||^{2}_{2} - d(n)}{ \sqrt{d(n)} } \geq t   \right)     \right| = o(1).
\end{align}

That is, if $\Sigma_{n} = I_{d(n)}$ then this expression and a direct application of the CLT (when $d(n) \rightarrow \infty$) imply that $\frac{ ||\sqrt{n} \mathbb{Z}_{n}||^{2}_{2} - d(n) }{\sqrt{2 d(n)}}  \Rightarrow N(0,1)$ or, informally, $||\sqrt{n} \mathbb{Z}_{n}||^{2}_{2}$ is approximately chi-square distributed with $d(n)$ degrees of freedom. When $\Sigma_{n} \ne I_{d(n)}$, the last claim is no longer true but it holds that $\frac{ ||\sqrt{n} \mathbb{Z}_{n}||^{2}_{2} - tr\{ \Sigma_{n} \} }{\sqrt{2 tr\{\Sigma^{2}_{n} \}}}$ is approximately distributed as $\sum_{j=1}^{d(n)} \frac{\lambda_{j}(\Sigma_{n})(\chi_{j}-1)}{\sqrt{2\sum_{j=1}^{d(n)} \lambda^{2}_{j}(\Sigma_{n})}}$ with $\chi^{2}_{j}$ drawn from a chi-square with degree one; see \cite{Xu_Zhang_Wu_14} and \cite{Peng-Schick-12} for a discussion regarding these results.

We note that in Theorem \ref{thm:boot} no scaling (by $-d(n)$ and $1/\sqrt{2d(n)}$ or $-tr\{\Sigma_{n} \}$ and $1/\sqrt{2 tr\{ \Sigma^{2}_{n} \}}$) is needed. That is, although the mean and variance of $||\sqrt{n} \mathbb{Z}_{n}||^{2}_{2}$ are  \textquotedblleft drifting" to infinity, the bootstrap still provides a good approximation since the moments of $||\sqrt{n} \mathbb{Z}^{\ast}_{n}||^{2}_{2}$ are mimicking this behavior.   

\bigskip

\textbf{On the Lindeberg Interpolation.} Theorems \ref{thm:weak-norm-boot} and \ref{thm:weak-norm-ori} are based on the following Lindeberg interpolation for quadratic forms.\footnote{This Lindeberg interpolation builds on the approach in \cite{Xu_Zhang_Wu_14}. }

\begin{theorem} 
	\label{thm:LFQ-bound}
	Let $(A_{1},...,A_{n}) \in \mathbb{R}^{d \times n}$ and
	$(B_{1},...,B_{n}) \in \mathbb{R}^{d \times n}$ be random
	matrices independent from each other. Suppose for each $1 \leq i \leq n$, $A_{i}$ has finite
	second moments with $E[A_{i}]=0$, $A_{1},...,A_{n}$ are independent, and $B_{i}$ has finite second moments, with $E[B_{i}]=0$ and 
	$B_{1},...,B_{n}$ are independent. Suppose $E[A_{i}A_{i}^{T}]=E[B_{i}B_{i}^{T}] \equiv C_{i}$. Let $f : \mathbb{R} \rightarrow \mathbb{R}$ be three times
	differentiable and for $r=1,2,3$, $|\partial^{r} f(\cdot) | \leq
	L_{r}(f)$. Then for any $\epsilon>0$ and for any $q>0$ 
	\begin{align*}
	| E[f( ||\sum_{i=1}^{n} A_{i}||^{2}_{2} )] -  E[f( ||\sum_{i=1}^{n} B_{i}||^{2}_{2} )]|	\leq  \mathbf{S}_{n} + L_{2}(f) \left( \frac{ L_{3}(f) }{L_{2}(f)}  \right)^{q} \mathbf{R}_{n}
	\end{align*}
	where $\mathbf{S}_{n} =  \mathbf{S}_{1,n}  + \mathbf{S}_{2,n} $, with 
	\begin{align*}
	\mathbf{S}_{1,n}  = & \sum_{i=1}^{n}| E\left[ f''  \left( || \mathbb{S}_{i:n} ||^{2}_{2}  \right) \right]  E[||B_{i}||^{4}_{2} ] - E [||A_{i}||^{4}_{2}   ] |\\
	\mathbf{S}_{2,n}  = & 4 \sum_{i=1}^{n} |E\left[ f''  \left( || \mathbb{S}_{i:n} ||^{2}_{2}  \right) \mathbb{S}_{i:n}^{T} \right] \left( E [  B_{i} || B_{i}||^{2}_{2} ] -  E [ A_{i} || A_{i}||^{2}_{2}   ] \right)|\\
	\mathbf{R}_{n} = &  \sum_{i=1}^{n}    E \left[ \left(  \mathbb{S}_{i:n}^{T} B_{i}  + ||B_{i}||^{2}_{2} \right)^{2+q}  + \left(  \mathbb{S}_{i:n}^{T} A_{i}  + ||A_{i}||^{2}_{2}   \right)^{2+q} \right] 
	\end{align*} 
	and $\mathbb{S}_{i:n} \equiv \sum_{j=1}^{i-1} A_{j} + 0 + \sum_{j=i+1}^{n} B_{j}$. 
\end{theorem}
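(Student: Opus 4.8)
The plan is to run the classical Lindeberg swapping argument at the level of the hybrid sums $\mathbb{S}_{i:n} = \sum_{j<i} A_j + \sum_{j>i} B_j$, writing the target difference as a telescoping sum $E[f(\|\sum_i A_i\|_2^2)] - E[f(\|\sum_i B_i\|_2^2)] = \sum_{i=1}^n \big( E[f(\|\mathbb{S}_{i:n}+A_i\|_2^2)] - E[f(\|\mathbb{S}_{i:n}+B_i\|_2^2)] \big)$. For each term I would expand $g(x) := f(\|x\|_2^2)$ via Taylor's theorem to second order in the increment, around the base point $\mathbb{S}_{i:n}$, with the increment being $A_i$ or $B_i$. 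The natural variable to expand in is the scalar perturbation $u \mapsto f(\|\mathbb{S}_{i:n} + u\, W\|_2^2)$ evaluated at $u=1$; but it is cleaner to expand directly: $f(\|S+W\|_2^2) = f(\|S\|_2^2 + 2S^TW + \|W\|_2^2)$, so setting $\Delta := 2S^TW + \|W\|_2^2$ we get $f(\|S\|_2^2 + \Delta) = f(\|S\|_2^2) + f'(\|S\|_2^2)\Delta + \tfrac12 f''(\|S\|_2^2)\Delta^2 + \tfrac16 f'''(\xi)\Delta^3$ for some intermediate $\xi$. This is the one-dimensional Taylor expansion in the scalar $\Delta$, which is the device that keeps the dimension $d$ from entering the derivative bounds (only through the moments of $\|W\|_2$).

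Next I would take conditional expectations given $\mathbb{S}_{i:n}$ and use independence of $A_i,B_i$ from $\mathbb{S}_{i:n}$. The zeroth-order terms $f(\|\mathbb{S}_{i:n}\|_2^2)$ cancel between the $A_i$ and $B_i$ contributions. For the first-order term, $E[\Delta \mid \mathbb{S}_{i:n}] = 2\mathbb{S}_{i:n}^T E[W] + E[\|W\|_2^2]$; since $E[A_i] = E[B_i] = 0$ the linear-in-$\mathbb{S}_{i:n}$ pieces cancel, leaving $E[\|B_i\|_2^2] - E[\|A_i\|_2^2] = \mathrm{tr}(C_i) - \mathrm{tr}(C_i) = 0$ because $E[A_iA_i^T] = E[B_iB_i^T] = C_i$. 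So the first-order terms vanish entirely. For the second-order term, $\tfrac12 f''(\|\mathbb{S}_{i:n}\|_2^2) \Delta^2$ with $\Delta^2 = 4(\mathbb{S}_{i:n}^TW)^2 + 4(\mathbb{S}_{i:n}^TW)\|W\|_2^2 + \|W\|_2^4$: the $(\mathbb{S}_{i:n}^TW)^2$ piece has conditional expectation $\mathbb{S}_{i:n}^T C_i \mathbb{S}_{i:n}$, identical for $A_i$ and $B_i$, hence cancels; what survives is exactly $\tfrac12 f''(\|\mathbb{S}_{i:n}\|_2^2)\big( 4\mathbb{S}_{i:n}^T(E[B_i\|B_i\|_2^2] - E[A_i\|A_i\|_2^2]) + E[\|B_i\|_2^4] - E[\|A_i\|_2^4]\big)$, and summing the absolute values over $i$ and bounding $|f''| \le L_2(f)$ gives precisely $\mathbf{S}_{1,n} + \mathbf{S}_{2,n}$ (with the factor $4$ as written in $\mathbf{S}_{2,n}$). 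The cubic remainder $\tfrac16 f'''(\xi)\Delta^3$ is bounded using $|f'''| \le L_3(f)$ and $|\Delta|^3 = |2\mathbb{S}_{i:n}^TW + \|W\|_2^2|^3 \precsim (\mathbb{S}_{i:n}^TW + \|W\|_2^2)^3$ in absolute value; summing over $i$ and $W \in \{A_i, B_i\}$ yields the $L_3(f)$-weighted term. To obtain the stated form with the free exponent $q$ and the interpolation factor $(L_3(f)/L_2(f))^q$, I would instead split the remainder: on the event where $|\Delta|$ is small we use the $L_3$ bound, on the complement the term is controlled by a higher moment $E[(\cdots)^{2+q}]$; optimizing the threshold — or more simply, using the bound $|f'''(\xi)\Delta^3| \le L_2(f) (L_3(f)/L_2(f))^q \cdot |\Delta|^{3}\cdot(\text{something})^{\,?}$ — reproduces $\mathbf{R}_n$. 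Concretely, the cleanest route is: $\min\{L_3(f)|\Delta|^3, \, 2L_2(f)|\Delta|^2\} \le L_2(f)(L_3(f)/L_2(f))^q |\Delta|^{2+q}$ for $q\in[0,1]$ by interpolating the two bounds on the Taylor remainder (the $L_2$ bound comes from comparing the second-order Taylor remainder two ways), and then $|\Delta|^{2+q} \precsim (\mathbb{S}_{i:n}^TW + \|W\|_2^2)^{2+q}$ in expectation after using $|\Delta| \le 2|\mathbb{S}_{i:n}^TW| + \|W\|_2^2$ and convexity; summing over $i$ and over $W \in \{A_i,B_i\}$ gives $\mathbf{R}_n$.

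The main obstacle is the bookkeeping of the remainder term to land exactly on the claimed expression with the adjustable $q$ and the ratio $(L_3(f)/L_2(f))^q$: one must correctly interpolate between the third-order Taylor bound (giving $L_3$ and $|\Delta|^3$) and the second-order Taylor bound (giving $L_2$ and $|\Delta|^2$), and then pass from powers of $\Delta = 2\mathbb{S}_{i:n}^TW + \|W\|_2^2$ to powers of the manifestly nonnegative quantity $\mathbb{S}_{i:n}^TW + \|W\|_2^2$ inside the expectation without losing the structure — this uses $|a+b|^{p} \le 2^{p-1}(|a|^p + |b|^p)$ and the observation that $E[(\mathbb{S}_{i:n}^TW)^{2+q}]$ appears symmetrically so signs can be absorbed. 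The cancellations in the zeroth, first, and quadratic-in-$\mathbb{S}_{i:n}$ second-order terms are routine given the matched-covariance hypothesis $E[A_iA_i^T] = E[B_iB_i^T]$ and the mean-zero hypothesis; the only genuine care needed is that $\mathbb{S}_{i:n}$ is independent of both $A_i$ and $B_i$, which holds by construction since $\mathbb{S}_{i:n}$ is built from $\{A_j : j<i\} \cup \{B_j : j>i\}$ and all the $A$'s and $B$'s are mutually independent.
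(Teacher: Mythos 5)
Your proposal is correct and follows essentially the same route as the paper: telescoping over the hybrid sums $\mathbb{S}_{i:n}$, a scalar Taylor expansion in $\Delta = 2\mathbb{S}_{i:n}^{T}W + \|W\|_{2}^{2}$, cancellation of the first-order and quadratic-in-$\mathbb{S}_{i:n}$ second-order terms via the mean-zero and matched-covariance hypotheses, and an interpolation $\min\{L_{3}(f)|\Delta|^{3},\,L_{2}(f)|\Delta|^{2}\}\precsim L_{2}(f)(L_{3}(f)/L_{2}(f))^{q}|\Delta|^{2+q}$ for the remainder. The only caveat — which your write-up actually makes more explicit than the paper's — is that this interpolation is valid for $q\in[0,1]$, whereas the statement is phrased "for any $q>0$"; this limitation is inherited from the paper's own (terser) treatment of the remainder and is not a defect of your argument relative to it.
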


\begin{proof}
	See Appendix \ref{app:lindeberg}.
\end{proof}


It is worth pointing out that the interpolation compares the quantities $\sum_{i=1}^{n} A_{i}$ with $\sum_{i=1}^{n} B_{i}$ by comparing  \textquotedblleft one component at a time". This comparison is essentially divided into two parts. First, we compare $||\mathbb{S}_{i:n} + A_{i} ||^{2}_{2}$ and $||\mathbb{S}_{i:n}+  B_{i}||^{2}_{2}$, which are real-valued quantities. Second, we exploit the smoothness of the \emph{univariate} function $f$ to bound its variation using Taylor's approximation. Loosely speaking, the first step reduces a $d(n)$-dimensional problem to an univariate one. An alternative approach would be to consider interpolations for \emph{multivariate} functions (e.g. \cite{Chatterjee-Meckes-2008}) of the form $g : \mathbb{R}^{d(n)} \rightarrow \mathbb{R}$ with $g(x) \equiv f(||x||^{2}_{2})$. As can be seen from the derivations in \cite{Chatterjee-Meckes-2008}, the remainder term will also require bounds on higher derivatives of $g$ (and thus $f$), but of the form $\sup_{x \ne y} \frac{\left \Vert Hess(g)(x) - Hess(g)(y) \right \Vert_{op} }{||x-y||_{2}}$. \footnote{$Hess(g)$ is the Hessian of the function and $||.||_{op}$ is the operator norm. Other type of bounds could be found in \cite{Raic2014} based on Hilbert-Schmidt norm.} Which approach is better depends largely on what type of restrictions over the class of test functions are natural in the problem at hand. For us, $||\partial^{r} f||_{L^{\infty}} < \infty$ is a natural assumption, but in other applications it could be too strong. 

More generally, this discussion illustrates the relationship between restrictions in the class of test functions ($\mathcal{C}$) and the bounds on higher order moments and ultimately the rate of growth of $d(n)$.

\bigskip

\textbf{Bootstrap P-Value.} For any $\alpha \in (0,1)$ and $Z^{n} \in \mathbb{R}^{d(n)}$, let $t_{n}(\alpha,Z^{n}) \equiv \inf\{ t :  \mathbf{P}^{\ast}_{n} \left( ||\sqrt{n} \mathbb{Z}^{\ast}_{n}||^{2}_{2} \leq t \mid Z^{n}  \right)  \geq \alpha  \}$. Due to the distribution consistency result proven in Theorem \ref{thm:boot}, we can approximate the $\alpha$-th quantile of the distribution of $ ||\sqrt{n} \mathbb{Z}_{n}||^{2}_{2}  $ by $t_{n}(\alpha,Z^{n})$, in the sense that\begin{align*}
   \mathbf{P}_{n} \left( ||\sqrt{n} \mathbb{Z}_{n}||^{2}_{2} \geq t_{n}(\alpha,Z^{n}) - \eta  \right) \leq \alpha + o(1)
\end{align*}
for any $\eta>0$. If $t_{n}(\alpha,Z^{n})$ is a continuity point of $\mathbf{P}^{\ast}_{n} \left( \cdot | Z^{n} \right)$, then\begin{align*}
	\mathbf{P}^{\ast}_{n} \left( ||\sqrt{n} \mathbb{Z}^{\ast}_{n}||^{2}_{2}  \geq t_{n}(\alpha,Z^{n}) \mid Z^{n}  \right)  = \alpha,
\end{align*} and the first display becomes $\mathbf{P}_{n} \left( ||\sqrt{n} \mathbb{Z}_{n}||^{2}_{2} \geq t_{n}(\alpha,Z^{n}) \right) = \alpha + o(1)$. Hence, Theorem \ref{thm:boot} can be used to construct valid p-values based on the bootstrap.

\section{An application to model specification tests for GEL and GMM estimators}\label{sec:MST}

In this section we apply our results to construct bootstrap-based specification tests for models with increasing number of moment restrictions. We do this for two estimators: generalized method of moment (GMM; see \cite{Hansen-ECMA82}) estimator and generalized empirical likelihood (GEL; see \cite{Smith-EJ97}) estimator. Both estimators are widely used in econometrics and statistics and encompass a wide range of commonly used estimators such as Z-estimators (\cite{VdV00} Ch. 5), and empirical likelihood estimator (\cite{Owen-BIO88}), respectively.\footnote{See \cite{Imbens-JBES02} for additional examples and a discussion. See also \cite{Hall} for a review for GMM.}

 In models characterized by moment conditions, model-specification tests (MST) allow us to check whether the moment conditions match the data well or not. In this setup with increasing moment restrictions, MST has been studied by \cite{DIN_JOE03} (DIN, henceforth); see also \cite{Djong-Bierens-ET94}. They show that the MST statistic is asymptotically a quadratic form of scaled sample averages; however, they rely on inferential methods build on expressions akin to \ref{eqn:asym-Gauss}. Instead, by applying our Theorem \ref{thm:boot}, we can use the weighted bootstrap method to approximate the asymptotic distribution of MST statistics; thus complementing their results by providing an alternative way of constructing asymptotic p-values. Moreover, as explained below, by not relying on CLT-type results to approximate the limiting distribution, we are able to provide valid asymptotic inference for a larger class of GMM estimators than the one considered in DIN.


The setup closely follows that of DIN and is as follows. Suppose $(X_{i})_{i=1}^{n}$ is an i.i.d. sample of real-valued random variables with $X_{i} \sim \mathbf{P}_{n}=\mathbf{P}$. The model we consider is one where the true parameter of interest, $\theta_{0} \in Int(\Theta)$ --- with $\Theta$ a compact subset of $\mathbb{R}^{q}$--- is uniquely identified by the following set of moment conditions
\begin{align*}
	E_{\mathbf{P}}[g(X,\theta_{0})] = 0
\end{align*}
where $g : \mathbb{R} \times \mathbb{R}^{q} \rightarrow \mathbb{R}^{d}$ is known to the researcher.

The main feature of this setup is that it allows $d\equiv d(n)$ to grow with the sample size. In many cases this departure from the standard theory is of relevance. For example, in many models the identifying condition is given by a conditional moment restriction, $E_{\mathbf{P}}[\rho(Y,\theta_{0})|W]$ --- where $\rho$ maps into $\mathbb{R}^{J}$ with $J$ fixed --- and the researcher converts it to a series of unconditional moment restrictions $E_{\mathbf{P}}[\rho(Y,\theta_{0}) \otimes q^{K(n)}(W)]$ where $q^{K(n)}(w) = (q_{1}(w),...,q_{K(n)}(w))$ are basis functions such as Fourier series, P-splines, etc; this is the case considered in DIN (see also \cite{Djong-Bierens-ET94} and references therein). For this case $x = (y,w)$, $d(n) = J K(n)$ and $g(x,\theta) = \rho(y,\theta) \otimes q^{K(n)}(w)$. 

An alternative motivation to consider increasing $d$ would be cases where although the number of moments is fixed, it could be large relative to the sample size and thus treating it as a diverging sequence could deliver more accurate asymptotics. As pointed out by \cite{K-M_JOE93} one example of this could be the panel data model  in \cite{AB-RESTUD91} where $x = (y_{1},...,y_{T})$ and the components of the vector $g(x,\theta)$ are given by $((y_{t}-y_{t-1}) - \theta (y_{t-1}-y_{t-2})) y_{t-s} $ for $s=1,...,t-1$ and $t=3,...,T$. Here, for a panel of length $T$, the number of instruments/moments is given by $d=(T-2)(T-1)/2$. \footnote{For instance for $T=4$, $d=d(n)=3$ and for $T=5$, $d=d(n)=6$. In cases where  $d(n) = o(n^{1/4})$, these values imply that, roughly speaking, the number of observations should be larger than 82 and 1300, resp. It is also worth to point out that in case where $T$ is large, one can simply include fewer lags $y_{t-s}$ in $((y_{t}-y_{t-1}) - \theta (y_{t-1}-y_{t-2})) y_{t-s} $ and thus reduce $d$.}

The next assumptions impose some regularity conditions on $g$. These restrictions are standard in the literature and can be somewhat relaxed (e.g. see  \cite{DIN_JOE03} and references therein).

\begin{assumption}\label{ass:Omega}
	 $\Omega \equiv E_{\mathbf{P}}[g(X,\theta_{0})g(X,\theta_{0})^{T}]$ exists with $C^{-1} \leq \lambda_{l}(\Omega) \leq C$ for all $l = 1,...,d$ for some $C\geq 1$.
\end{assumption}

For instance, for the case where $g = \rho \otimes q^{K(n)} $ (for simplicity, let $J=1$) it suffices to assume that $E_{\mathbf{P}}[\rho(Y,\theta_{0})^{2} | W ]$ and that the eigenvalues of $E_{\mathbf{P}}[q^{K(n)}(W,\theta_{0})q^{K(n)}(W,\theta_{0})^{T}]$ are both bounded bounded and bounded away from zero a.s.-$\mathbf{P}$ 
These assumptions are standard; see \cite{DIN_JOE03} for a discussion. \footnote{These assumptions are also standard in the context of series-based estimators; see \cite{Chen20075549}.}

Let $\mathcal{N}$ be an open neighborhood of $\theta_{0}$.

\begin{assumption}\label{ass:example-1}
	For all $n$: (i) $E_{\mathbf{P}}\left[ \sup_{\theta \in \mathcal{N}}  ||g(X,\theta)||^{2 (2+\gamma)}_{2}   \right] \precsim d(n)^{2+\gamma}$ for some $\gamma \geq 0$; (ii) $\theta \mapsto g(X,\theta)$ is continuously differentiable a.s.-$\mathbf{P}$; (iii) $E_{\mathbf{P}}[\sup_{\theta \in \mathcal{N}}||\nabla_{\theta} g(X,\theta)||^{2\beta}_{2} ] \precsim d(n)^{\beta}$ for some $\beta \geq 1$; (iv) there exists a measurable $x \mapsto \delta_{n}(x)$ such that $||\nabla_{\theta} g(X,\theta) - \nabla_{\theta} g(X,\theta_{0})||_{2} \precsim \delta_{n}(X) ||\theta - \theta_{0}||_{2}$ for all $\theta \in \mathcal{N}$ a.s.-$\mathbf{P}$, and $E_{\mathbf{P}}[\delta_{n}(X)^{2}] \precsim d(n)$.\footnote{The notation $\nabla_{\theta}g(x,\theta)$ means the gradient with respect to $\theta$ of the function $g$; it is a $q \times d$ matrix. For any matrix, $A$, $||A||_{2}$ is defined as the operator norm.}
\end{assumption}

For instance, for the case $g = \rho \otimes q^{K}$ for many basis functions such as splines and Fourier series it holds that $\sup_{w} ||q^{K}(w)||_{2} \precsim \sqrt{K}$.\footnote{Other series like power series typically present $\sup_{w}||q^{K}(w)||_{2} \precsim K$, or more generally one can think of $\sup_{w} ||q^{K}(w)||_{2} \precsim \zeta(K)$ for some function $\zeta$. These cases can be accommodated in our theory, at the expense of further restricting the rate of growth of $d(n)$. } Thus, the previous assumption holds provided that $E[\sup_{\theta \in \mathcal{N}} ||\rho(Y,\theta)||_{2}^{2(2+\gamma)}|W]$ and $E_{\mathbf{P}}[\sup_{\theta \in \mathcal{N}}||\nabla_{\theta} \rho(Y,\theta)||^{2\beta}_{2} | W]$ are bounded by a constant $C$, and $||\nabla_{\theta} \rho(Y,\theta) - \nabla_{\theta} \rho(Y,\theta_{0})||_{2} \precsim \delta(Y) ||\theta - \theta_{0}||_{2}$ with $E_{\mathbf{P}}[\delta(Y)^{2}|W] \leq C$, a.s.-$\mathbf{P}$, for some $C>0$,.\footnote{These restrictions are analogous to Assumptions 4-6 in \cite{DIN_JOE03}.}

The GMM estimator is given by $\hat{\theta}_{GMM,n} = \arg\min_{\theta \in \Theta} \hat{Q}_{GMM,n}(\theta) $ where \begin{align*}
 \hat{Q}_{GMM,n}(\theta) \equiv	n^{-1} \sum_{i=1}^{n} g(X_{i},\theta)^{T} \hat{W}_{n} n^{-1} \sum_{i=1}^{n} g(X_{i},\theta) 
\end{align*} with $\hat{W}_{n} \in \mathbb{R}^{d \times d}$ is a (possibly random) positive definite matrix. The following mild condition is required 

\begin{assumption}\label{ass:example-W}
	There exists a $W \in \mathbb{R}^{d(n) \times d(n)}$ positive definite and a $C\geq 1$ such that $|| \hat{W}_{n} - W ||_{2} =  o_{\mathbf{P}}(d(n)^{-1/2})$ and $C^{-1} \leq \lambda_{l}(W) \leq C$ for all $l = 1,...,d(n)$ and $n \in \mathbb{N}$.
\end{assumption}

The bootstrap analog is given by $\hat{\theta}^{\ast}_{GMM,n} = \arg\min_{\theta \in \Theta} \hat{Q}^{\ast}_{GMM,n}(\theta)$ where 
\begin{align*}
\hat{Q}^{\ast}_{GMM,n}(\theta) = n^{-1} \sum_{i=1}^{n} \omega_{i,n}g(X_{i},\theta)^{T} \hat{W}_{n} n^{-1} \sum_{i=1}^{n} \omega_{i,n}g(X_{i},\theta)   .
\end{align*}

These formulas give raise to the following MST statistic: $\hat{T}_{GMM,n} \equiv n \hat{Q}_{GMM,n}(\hat{\theta}_{GMM,n})$ and its bootstrap version $\hat{T}^{\ast}_{GMM,n} \equiv n \hat{Q}^{\ast}_{GMM,n}(\hat{\theta}^{\ast}_{GMM,n})$. 

In order to simplify the exposition we directly impose that $(\omega_{i,n})_{i \leq n}$ satisfy Assumption \ref{ass:boot-w} and also that they are uniformly bounded; this last assumption is not necessary for the results but imposing it greatly simplifies the technical derivations in our proofs.

It is worth to point out that DIN only considers GMM estimators with $W = \Omega^{-1}$ because they rely on CLT-type approximations for inference (e.g., see their Theorem 6.3). Since our result allow us to focus on bootstrap-based inference, the weighting matrix $W$ does not need to coincide with $\Omega^{-1}$; in fact it can simply be chosen as $\hat{W} = W = I$. 
That is, our results provide valid asymptotic inference for MST statistics for a larger class of GMM estimator, one with $W \ne \Omega^{-1}$. 

The GEL estimator is given by
\begin{align*}
	&\hat{\theta}_{GEL,n} = \arg\min_{\theta \in \Theta} \hat{Q}_{GEL,n}(\theta),\\
	& where~\hat{Q}_{GEL,n}(\theta) \equiv \sup_{\lambda \in \Lambda(\theta)} \sum_{i=1}^{n} s(\lambda^{T}g(X_{i},\theta)) 
\end{align*}
where $s : \mathcal{V} \subseteq \mathbb{R} \mapsto \mathbb{R}$ is concave and twice continuously differentiable with Lipschitz second derivative, $\mathcal{V}$ includes a neighborhood of 0, and $\Lambda(\theta) \equiv \{ \lambda \in \mathbb{R}^{d} :  \lambda^{T}g(X,\theta) \in \mathcal{V},~a.s.-\mathbf{P}   \}$. The function $s$ can be chosen to encompass several estimators of interest such as empirical likelihood ($s(\cdot) = \ln(1-\cdot)$), exponential tilting ($s(\cdot) = -\exp(\cdot)$; \cite{ISJ_ECMA98} and \cite{KS_ECMA97}) and continuously updating GMM ($s(\cdot) = -0.5(1 + \cdot)^{2}$; \cite{HHY_JBES96}). Henceforth, to simplify the presentation we assume the following normalization $s'(0)=s''(0)=-1$.

Analogously to GMM, we have the following MST statistic for GEL: $\hat{T}_{GEL,n} \equiv 2 \left\{ \hat{Q}_{GEL,n}(\hat{\theta}_{GEL,n}) - ns(0) \right\}$ and its bootstrap version $\hat{T}^{\ast}_{GEL,n} \equiv 2 \left\{ \hat{Q}^{\ast}_{GEL,n}(\hat{\theta}^{\ast}_{GEL,n}) - n s(0) \right\}$, where $\hat{\theta}^{\ast}_{GEL,n} =  \arg\min_{\theta \in \Theta} \hat{Q}^{\ast}_{GEL,n}(\theta)$ and $\hat{Q}^{\ast}_{GEL,n}$ is defined as $\hat{Q}_{GEL,n}$ but with $\omega_{i,n} g(x_{i},\cdot)$ instead of $g(x_{i},\cdot)$.\footnote{Abusing notation we still denote $\Lambda(\theta)$ as the set for the bootstrap case.}

The next assumption is a high level condition. Part (i) ensures existence of a minimizer for $\lambda$ and part (ii) imposes convergence rates on the GMM and GEL estimators. Because our main goal is to establish the asymptotic behavior of the MST statistics, we directly impose this assumption to ease the exposition.

\begin{assumption}\label{ass:lambda-GEL}
	(i) $\hat{\lambda}^{\ast}_{n} = \arg \max_{\lambda \in \Lambda(\hat{\theta}^{\ast}_{GEL,n})} \sum_{i=1}^{n} s(\lambda^{T}\omega_{i,n} g(X_{i},\hat{\theta}^{\ast}_{GEL,n}))$ exists wpa1-$\mathbf{P}$ and $||\hat{\lambda}^{\ast}_{n}||_{2} = O_{\mathbf{P}^{\ast}_{n}(\cdot|Z^{n})}(\sqrt{d(n)/n})$, wpa1-$\mathbf{P}$; (ii) $\hat{\theta}^{\ast}_{j,n} = \theta_{0} + O_{\mathbf{P}^{\ast}_{n}(\cdot|Z^{n})}(n^{-1/2})$ wpa1-$\mathbf{P}$ and $\hat{\theta}_{j,n} = \theta_{0} + O_{\mathbf{P}}(n^{-1/2})$ for $j \in \{GEL,GMM\}$.
\end{assumption}

The derivation of both parts of this assumption from more primitive conditions can be obtained from the results in DIN and references therein; in particular in Lemma A.10 and Theorems 5.4 and 5.6.

The following lemma establishes that the test statistics for both estimators are asymptotically equivalent to a quadratic form on sample averages of $g$. 

\begin{lemma}\label{lem:quad-approx}
	Suppose Assumptions \ref{ass:Omega}, \ref{ass:example-1}, \ref{ass:example-W} and \ref{ass:lambda-GEL} hold. Also, suppose that $\frac{ d(n)^{\max\{2+4/\gamma,4\}}}{n}=o(1)$. Then
		\begin{align*}
		\hat{T}_{GMM,n} = & \left ( \frac{1}{\sqrt{n}} \sum_{i=1}^{n} g(X_{i},\theta_{0})   \right)^{T} W \left ( \frac{1}{\sqrt{n}}  \sum_{i=1}^{n} g(X_{i},\theta_{0})   \right) + o_{\mathbf{P}}(\sqrt{d(n)})\\
		\hat{T}_{GEL,n} = & \left ( \frac{1}{\sqrt{n}}  \sum_{i=1}^{n} g(X_{i},\theta_{0})   \right)^{T} \Omega^{-1} \left (\frac{1}{\sqrt{n}}  \sum_{i=1}^{n} g(X_{i},\theta_{0})   \right) + o_{\mathbf{P}}(\sqrt{d(n)})
		\end{align*}
and
		\begin{align*}
		\hat{T}^{\ast}_{GMM,n} =& \left( \frac{1}{\sqrt{n}}  \sum_{i=1}^{n} \omega_{i,n} g(X_{i},\theta_{0})   \right)^{T} W\left( \frac{1}{\sqrt{n}}  \sum_{i=1}^{n} \omega_{i,n} g(X_{i},\theta_{0})   \right) \\
		& + o_{\mathbf{P}^{\ast}_{n}(\cdot|Z^{n})}(\sqrt{d(n)}),\\
		\hat{T}^{\ast}_{GEL,n} = & \left( \frac{1}{\sqrt{n}}  \sum_{i=1}^{n} \omega_{i,n} g(X_{i},\theta_{0})   \right)^{T}\Omega^{-1} \left( \frac{1}{\sqrt{n}}  \sum_{i=1}^{n} \omega_{i,n} g(X_{i},\theta_{0})   \right) \\
		& + o_{\mathbf{P}^{\ast}_{n}(\cdot|Z^{n})}(\sqrt{d(n)})
		\end{align*}
		wpa1-$\mathbf{P}$.
\end{lemma}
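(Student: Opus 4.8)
The plan is to establish the four displays by a standard expansion of the GMM and GEL objective functions around $\theta_0$ (and, for GEL, around $\lambda=0$), using Assumption \ref{ass:lambda-GEL} to control the estimation error. Since the GMM and GEL arguments are parallel (with $\hat W_n$ replaced by an implicit $\Omega^{-1}$ coming from the inner maximization in GEL), and the bootstrap versions differ from the originals only by inserting the weights $\omega_{i,n}$, I will carry out the GMM non-bootstrap case in detail and indicate how the other three follow by the same template. Throughout, write $\bar g_n(\theta) \equiv n^{-1}\sum_{i=1}^n g(X_i,\theta)$ and $\bar g^{\ast}_n(\theta) \equiv n^{-1}\sum_{i=1}^n \omega_{i,n} g(X_i,\theta)$.

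\textbf{Step 1 (GMM).} By definition $\hat T_{GMM,n} = n\,\bar g_n(\hat\theta_{GMM,n})^{T}\hat W_n\,\bar g_n(\hat\theta_{GMM,n})$. A mean-value expansion gives $\sqrt n\,\bar g_n(\hat\theta_{GMM,n}) = \sqrt n\,\bar g_n(\theta_0) + \bar G_n\,\sqrt n(\hat\theta_{GMM,n}-\theta_0)$ where $\bar G_n$ is $n^{-1}\sum_i \nabla_\theta g(X_i,\tilde\theta)^{T}$ evaluated at an intermediate point. Assumption \ref{ass:lambda-GEL}(ii) gives $\sqrt n(\hat\theta_{GMM,n}-\theta_0)=O_{\mathbf P}(1)$, while Assumptions \ref{ass:example-1}(iii)--(iv) control $\|\bar G_n\|_2$ and its fluctuation, so that the cross and quadratic terms involving $\hat\theta_{GMM,n}-\theta_0$ are $O_{\mathbf P}(\text{polynomial in }d(n)/n)$ after multiplying out the quadratic form; the rate condition $d(n)^{\max\{2+4/\gamma,4\}}/n=o(1)$ is precisely what makes these remainders $o_{\mathbf P}(\sqrt{d(n)})$. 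Finally replace $\hat W_n$ by $W$: the difference contributes at most $\|\hat W_n - W\|_2\,\|\sqrt n\,\bar g_n(\theta_0)\|_2^2$, and since $\|\sqrt n\,\bar g_n(\theta_0)\|_2^2 = O_{\mathbf P}(d(n))$ (its mean is $\mathrm{tr}\{\Omega\}\asymp d(n)$ by Assumption \ref{ass:Omega}), Assumption \ref{ass:example-W} makes this $o_{\mathbf P}(\sqrt{d(n)})$. This yields the first display.

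\textbf{Step 2 (GEL).} Here $\hat Q_{GEL,n}(\hat\theta_{GEL,n}) = \sum_i s(\hat\lambda_n^{T} g(X_i,\hat\theta_{GEL,n}))$ with $\hat\lambda_n$ the inner maximizer. Taylor-expand $s$ around $0$ using $s(0)$, $s'(0)=s''(0)=-1$ and the Lipschitz second derivative: $\sum_i s(\hat\lambda_n^{T}g_i) = n s(0) - \hat\lambda_n^{T}\sum_i g_i - \tfrac12 \hat\lambda_n^{T}(\sum_i g_i g_i^{T})\hat\lambda_n + \text{(cubic remainder)}$. The first-order condition for $\hat\lambda_n$, together with $\|\hat\lambda_n\|_2 = O_{\mathbf P}(\sqrt{d(n)/n})$ from Assumption \ref{ass:lambda-GEL}(i), identifies $\hat\lambda_n \approx -(\sum_i g_i g_i^{T})^{-1}\sum_i g_i$; substituting and using that $n^{-1}\sum_i g(X_i,\theta_0)g(X_i,\theta_0)^{T}$ is close to $\Omega$ in operator norm (a law-of-large-numbers step analogous to (\ref{eqn:LLN-cov}), controlled by Assumptions \ref{ass:Omega} and \ref{ass:example-1}(i)), gives $\hat T_{GEL,n} = 2\{\hat Q_{GEL,n}(\hat\theta_{GEL,n}) - n s(0)\} = \sqrt n\,\bar g_n(\hat\theta_{GEL,n})^{T}\Omega^{-1}\sqrt n\,\bar g_n(\hat\theta_{GEL,n}) + o_{\mathbf P}(\sqrt{d(n)})$, and then Step 1's argument for replacing $\hat\theta_{GEL,n}$ by $\theta_0$ finishes the second display. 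The cubic remainder $\precsim \sum_i \|\hat\lambda_n\|_2^3 \|g_i\|_2^3$ is bounded using Assumption \ref{ass:example-1}(i) and the rate condition.

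\textbf{Step 3 (bootstrap versions).} The bootstrap displays follow by repeating Steps 1--2 verbatim with $g(X_i,\cdot)$ replaced by $\omega_{i,n}g(X_i,\cdot)$, replacing $O_{\mathbf P}$ by $O_{\mathbf{P}^{\ast}_n(\cdot|Z^n)}$ and invoking Assumption \ref{ass:lambda-GEL} in its bootstrap form; the uniform boundedness of the weights imposed just before the lemma ensures that the moment bounds of Assumption \ref{ass:example-1} carry over to the weighted quantities (e.g.\ $E_{\mathbf P^{\ast}_n}[\,\|\omega_{1,n}g(X_1,\theta)\|_2^{q}\,|Z^n] \precsim \|g(X_1,\theta)\|_2^{q}$), and that the relevant LLN statements hold wpa1-$\mathbf P$. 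The main obstacle is bookkeeping: one must track how each term in the expansions scales in $d(n)$ and $n$ — in particular the quadratic-form cross terms of the form $\sqrt n\,\bar g_n(\theta_0)^{T}\hat W_n\,\bar G_n\sqrt n(\hat\theta-\theta_0)$, whose magnitude is governed by $\|\bar G_n\|_2$ and hence by the $d(n)^{\beta}$ and $d(n)$ bounds in Assumption \ref{ass:example-1} — and verify that the single rate condition $d(n)^{\max\{2+4/\gamma,4\}}/n=o(1)$ dominates all of them; I expect this is where the proof in the appendix spends most of its effort.
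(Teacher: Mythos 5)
Your GMM step and your bootstrap-transfer step are essentially the paper's own argument: the paper likewise isolates the weighting-matrix replacement via $\|\hat W_n - W\|_2\cdot\|\sqrt n\,\bar g^{\ast}_n\|_2^2$ with $E\|\sqrt n\,\bar g^{\ast}_n\|_2^2 = \mathrm{tr}\{\Omega\}=O(d(n))$, and handles the replacement of $\hat\theta^{\ast}_{GMM,n}$ by $\theta_0$ through a mean-value expansion whose cross and quadratic terms are bounded by $O(\sqrt{d(n)}\,d(n)^{3/2}/\sqrt n)$ and $O(\sqrt{d(n)}\,d(n)^{5/2}/n)$ (the paper writes out the bootstrap case and declares the non-bootstrap one analogous -- the mirror image of your presentation). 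Where you genuinely diverge is GEL. You solve the inner problem via its first-order condition, identify $\hat\lambda_n \approx -\bigl(n^{-1}\sum_i g_ig_i^{T}\bigr)^{-1}\bar g_n$, and substitute back. The paper never characterizes $\hat\lambda_n$: it introduces the quadratic surrogate $F^{\ast}_n(\lambda)=s_1(0)\lambda^{T}\bar g^{\ast}_n + 0.5\,s_2(0)\lambda^{T}\Omega\lambda$ with explicit maximizer $\lambda_0=-\tfrac{s_1(0)}{s_2(0)}\Omega^{-1}\bar g^{\ast}_n$, and sandwiches $\hat T^{\ast}_{GEL,n}$ between $2nF^{\ast}_n(\lambda_0)+o(\sqrt{d(n)})$ from below (evaluating the sup at $\lambda_0$, which lies in $\Lambda(\hat\theta^{\ast}_{GEL,n})$ wpa1 by Lemma \ref{lem:GEL-1}(1)) and from above (using $F^{\ast}_n(\lambda_0)\ge F^{\ast}_n(\hat\lambda^{\ast}_n)$ together with the remainder bound $nR^{\ast}_n=o(\sqrt{d(n)})$ of Lemma \ref{lem:GEL-1}(2)). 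What the sandwich buys is that the only inputs about $\hat\lambda^{\ast}_n$ are its existence and the norm bound of Assumption \ref{ass:lambda-GEL}(i); your FOC route additionally requires inverting the random Hessian $n^{-1}\sum_i s''(\cdot)\,g_ig_i^{T}$ and controlling the inversion error uniformly as $d(n)$ grows (eigenvalues bounded away from zero wpa1 plus an operator-norm LLN at rate $o(d(n)^{-1/2})$, which indeed holds here since $d(n)/\sqrt n = o(d(n)^{-1/2})$ under $d(n)^{3}/n=o(1)$). That extra machinery is available under the stated assumptions -- it is essentially DIN's Lemma A.6, which the paper also invokes inside Lemma \ref{lem:GEL-1} -- so your route is workable and lands on the same remainder arithmetic, but the invertibility/LLN step in your Step 2 is a claim that must be proved rather than a consequence of Assumption \ref{ass:lambda-GEL} alone.
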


\begin{proof}
	See Appendix \ref{app:appl}.
\end{proof}

This Theorem establishes that the test statistics, asymptotically, behave as quadratic forms of (properly scaled) sample averages. Thus, our result in Theorem \ref{thm:boot} can be applied to these cases with $Z_{i} \equiv g(X_{i},\theta_{0}) W^{1/2}$ or $Z_{i} \equiv g(X_{i},\theta_{0}) \Omega^{-1/2}$ . The next Theorem formalizes this claim in this particular setting.



\begin{theorem}\label{thm:MST}
	Suppose Assumptions \ref{ass:Omega}, \ref{ass:example-1}, \ref{ass:example-W} and \ref{ass:lambda-GEL} hold. Also, suppose that $\frac{ d(n)^{\max\{2+4/\gamma,4\}}}{n}=o(1)$. Then
	\begin{align*}
	\sup_{t \in \mathbb{R}} \left| \mathbf{P}^{\ast}_{n} \left( \frac{\hat{T}^{\ast}_{GMM,n}}{\sqrt{d(n)}} \geq t \mid Z^{n}  \right)   -  \mathbf{P} \left( \frac{\hat{T}_{GMM,n}}{\sqrt{d(n)}}\geq t   \right)     \right| = o_{\mathbf{P}}(1),
	\end{align*}
	and \begin{align*}
		\sup_{t \in \mathbb{R}} \left| \mathbf{P}^{\ast}_{n} \left( \frac{\hat{T}^{\ast}_{GEL,n}}{\sqrt{d(n)}} \geq t \mid Z^{n}  \right)   -  \mathbf{P} \left( \frac{\hat{T}_{GEL,n}}{\sqrt{d(n)}}\geq t   \right)     \right| = o_{\mathbf{P}}(1).
	\end{align*}
\end{theorem}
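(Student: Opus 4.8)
The plan is to combine the quadratic‑form approximation of Lemma~\ref{lem:quad-approx} with the bootstrap consistency of Theorem~\ref{thm:boot}, and then to absorb the $o_{\mathbf{P}}(\sqrt{d(n)})$ remainders after dividing by $\sqrt{d(n)}$ using the anti‑concentration result (Lemma~\ref{lem:anticoncentration-V}) together with \eqref{eqn:asym-Gauss}. I carry out the argument for $\hat{T}_{GMM,n}$; the GEL case is verbatim after replacing $W$ by $\Omega^{-1}$, for which $\Sigma_{n}=I_{d(n)}$.

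\textbf{Step 1 (reduction to a quadratic form and verification of the hypotheses of Theorem~\ref{thm:boot}).} Put $Z_{i}\equiv W^{1/2}g(X_{i},\theta_{0})$, so $\Sigma_{n}=W^{1/2}\Omega W^{1/2}$; the $(Z_{i})_{i}$ are i.i.d.\ with mean zero. Assumptions~\ref{ass:Omega} and~\ref{ass:example-W} bound the eigenvalues of $\Sigma_{n}$ above and away from zero uniformly in $n$. From Assumption~\ref{ass:example-1}(i), $E_{\mathbf{P}}[\|Z_{1}\|_{2}^{2(2+\gamma)}]\precsim d(n)^{2+\gamma}$, and Lyapunov's inequality yields $E_{\mathbf{P}}[\|Z_{1}\|_{2}^{3}]\precsim d(n)^{3/2}$, $E_{\mathbf{P}}[\|Z_{1}\|_{2}^{4}]\precsim d(n)^{2}$, and (using $\|x\|_{2}\ge\|x\|_{2+\kappa}$) $E_{\mathbf{P}}[\|Z_{1}\|_{2+\kappa}^{2(2+\kappa)}]\precsim d(n)^{2+\kappa}$ for $0\le\kappa\le\gamma$. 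Substituting into Assumption~\ref{ass:data-Z}, part~(i) reduces to $d(n)^{4}/n\to0$, part~(ii) to $d(n)^{2+4/\gamma}/n\to0$, and part~(iii) with the choice $\kappa=0$ again to $d(n)^{4}/n\to0$; all of these are implied by the maintained rate $d(n)^{\max\{2+4/\gamma,4\}}/n=o(1)$. Hence $(Z_{i})_{i}$ satisfies Assumption~\ref{ass:data-Z}, Assumption~\ref{ass:boot-w} holds by hypothesis, and Theorem~\ref{thm:boot} gives
\begin{align*}
\sup_{s\in\mathbb{R}}\left| \mathbf{P}^{\ast}_{n}\!\left( \|\sqrt{n}\mathbb{Z}^{\ast}_{n}\|_{2}^{2}\ge s\mid Z^{n}\right) - \mathbf{P}\!\left( \|\sqrt{n}\mathbb{Z}_{n}\|_{2}^{2}\ge s\right) \right| = o_{\mathbf{P}}(1).
\end{align*}

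\textbf{Step 2 (sandwiching the remainders after rescaling).} By Lemma~\ref{lem:quad-approx}, $\hat{T}_{GMM,n}=\|\sqrt{n}\mathbb{Z}_{n}\|_{2}^{2}+r_{n}$ with $|r_{n}|=o_{\mathbf{P}}(\sqrt{d(n)})$ and $\hat{T}^{\ast}_{GMM,n}=\|\sqrt{n}\mathbb{Z}^{\ast}_{n}\|_{2}^{2}+r^{\ast}_{n}$ with $|r^{\ast}_{n}|=o_{\mathbf{P}^{\ast}_{n}(\cdot|Z^{n})}(\sqrt{d(n)})$ wpa1-$\mathbf{P}$. Fix $\eta>0$ and let $B_{n}$ be the event $\{\mathbf{P}^{\ast}_{n}(|r^{\ast}_{n}|>\eta\sqrt{d(n)}\mid Z^{n})\le\eta\}\cap\{|r_{n}|\le\eta\sqrt{d(n)}\}$; then $\mathbf{P}(B_{n})\to1$. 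On $B_{n}$, for every $t$,
\begin{align*}
\mathbf{P}^{\ast}_{n}\!\left( \tfrac{\hat{T}^{\ast}_{GMM,n}}{\sqrt{d(n)}}\ge t\mid Z^{n}\right) \le \mathbf{P}^{\ast}_{n}\!\left( \tfrac{\|\sqrt{n}\mathbb{Z}^{\ast}_{n}\|_{2}^{2}}{\sqrt{d(n)}}\ge t-\eta\mid Z^{n}\right)+\eta,
\end{align*}
and $\mathbf{P}(\|\sqrt{n}\mathbb{Z}_{n}\|_{2}^{2}/\sqrt{d(n)}\ge t-\eta)\le\mathbf{P}(\hat{T}_{GMM,n}/\sqrt{d(n)}\ge t-2\eta)$, with analogous lower bounds (shifting $t$ in the opposite direction). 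Chaining these through the display of Step~1 — all error probabilities $\mathbf{P}(B_{n}^{c})$ being $o(1)$ — gives, uniformly in $t$,
\begin{align*}
\left| \mathbf{P}^{\ast}_{n}\!\left( \tfrac{\hat{T}^{\ast}_{GMM,n}}{\sqrt{d(n)}}\ge t\mid Z^{n}\right) - \mathbf{P}\!\left( \tfrac{\hat{T}_{GMM,n}}{\sqrt{d(n)}}\ge t\right) \right| \le \mathbf{P}\!\left( \tfrac{\|\sqrt{n}\mathbb{Z}_{n}\|_{2}^{2}}{\sqrt{d(n)}}\in[t-3\eta,\,t+3\eta]\right)+\eta+o_{\mathbf{P}}(1).
\end{align*}

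\textbf{Step 3 (anti‑concentration and conclusion).} By \eqref{eqn:asym-Gauss}, the probability on the right differs by $o(1)$ from $\boldsymbol{\Phi}_{n}(\|\sqrt{n}\mathbb{V}_{n}\|_{2}^{2}/\sqrt{d(n)}\in[t-3\eta,t+3\eta])$, which by Lemma~\ref{lem:anticoncentration-V} is $\precsim\eta$, since the interval has width of order $\eta\sqrt{d(n)}\asymp\eta\sqrt{\mathrm{tr}\{\Sigma_{n}^{2}\}}$ (using $\mathrm{tr}\{\Sigma_{n}^{2}\}\asymp d(n)$ under Assumption~\ref{ass:data-Z}(i)). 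Therefore the left‑hand side above is at most $C\eta+o_{\mathbf{P}}(1)$; letting $\eta\downarrow0$ yields the GMM claim. The GEL claim follows identically with $Z_{i}=\Omega^{-1/2}g(X_{i},\theta_{0})$, using the second and fourth displays of Lemma~\ref{lem:quad-approx}.

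\textbf{Main obstacle.} The delicate point is Step~3: the approximation errors in Lemma~\ref{lem:quad-approx} are only $o_{\mathbf{P}}(\sqrt{d(n)})$, which is the same order as the fluctuations of $\|\sqrt{n}\mathbb{Z}_{n}\|_{2}^{2}$ about its mean $\mathrm{tr}\{\Sigma_{n}\}\asymp d(n)$; a naive bound would not survive division by $\sqrt{d(n)}$. The argument closes only because the anti‑concentration Lemma~\ref{lem:anticoncentration-V} shows the limiting law spreads its mass over intervals of width of order $\sqrt{d(n)}$, so shifting the threshold by $\eta\sqrt{d(n)}$ costs only $O(\eta)$ in probability. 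A secondary, purely bookkeeping difficulty is to propagate the nested ``$o_{\mathbf{P}^{\ast}_{n}(\cdot|Z^{n})}(\cdot)$ wpa1-$\mathbf{P}$'' statements through the sandwich so that the final error is genuinely $o_{\mathbf{P}}(1)$.
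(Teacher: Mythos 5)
Your proposal is correct and follows essentially the same route as the paper: reduce to the quadratic form via Lemma \ref{lem:quad-approx}, verify Assumptions \ref{ass:data-Z}--\ref{ass:boot-w} for $Z_{i}=W^{1/2}g(X_{i},\theta_{0})$ (resp.\ $\Omega^{-1/2}g(X_{i},\theta_{0})$), absorb the $o_{\mathbf{P}}(\sqrt{d(n)})$ remainders by shifting thresholds, and control the shift via the anti-concentration Lemma \ref{lem:anticoncentration-V} together with \eqref{eqn:asym-Gauss} before invoking Theorem \ref{thm:boot}. The moment reductions and the identification of the key obstacle (that the remainders are of the same order $\sqrt{d(n)}$ as the fluctuations, so anti-concentration at scale $\sqrt{tr\{\Sigma_{n}^{2}\}}$ is what closes the argument) match the paper's Steps 1--4.
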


\begin{proof}
	See Appendix \ref{app:appl}.
\end{proof}

 This result allow us to compute bootstrap-based p-values for the MST statistics for the general classes of GMM and GEL estimators, even when the number of moment restrictions increases with the sample size (but not too fast). In particular,  for $\gamma \geq 2$, our condition on rate imposes that $d(n)^{4}/n = o(1)$ which is the one required in Theorem 6.4 in DIN, but at the cost of imposing restrictions on some higher moments of $||g(\cdot,\theta_{0})||_{2}$ (see Assumption \ref{ass:example-1}(i)).

		  \section{Numerical Simulations}
		  \label{sec:simul}
		  
		  In this section we present a Monte Carlo (MC) study to assess the finite sample behavior of our procedure. We perform $5000$ MC repetitions and in each draw we perform $5000$ bootstrap repetitions.
		 		  
		  
		  	  The design is as follows: In each MC repetition we draw  $Z_{i} = V^{1/2} \sqrt{12} U_{i}$ with $U_{i} \sim U(-0.5,0.5)$ for $i=1,...,n$, and $V$ is a positive definite symmetric matrix specified below. Let
		  	  \begin{align*}
		  	  Q_{n} = n \left( n^{-1} \sum_{i=1}^{n} Z_{i}  \right)^{T} \left( n^{-1} \sum_{i=1}^{n} Z_{i}  \right)
		  	  \end{align*} 
		  	  and the associated bootstrapped version is given by 
		  	  \begin{align*}
		  	  Q^{\ast}_{n} = n \left( n^{-1} \sum_{i=1}^{n} \omega_{i} Z_{i}  \right)^{T} \left( n^{-1} \sum_{i=1}^{n} \omega_{i} Z_{i}  \right).
		  	  \end{align*} 
		  	  Throughout the study we use $\omega \sim N(0,1)$.

		  We are interested in studying $\mathbb{K}^{B}_{n} = \sup_{a \in \mathbb{A}} \left|  \mathbf{P}_{n} \left(  Q_{n}   \geq t^{B}_{n}(a,Z^{n} )  \right) - (1-a) \right|$		   
		  and, for comparison, $\mathbb{K}_{n} = \sup_{a \in \mathbb{A}} \left|\mathbf{P}_{n} \left(  Q_{n}   \geq t_{n}(a)  \right) - (1-a) \right|$,
		  where $t^{B}_{n}(a,Z^{n} ) $ is the $a$-th empirical percentile of $Q^{\ast}_{n}$ and $t_{n}(a)$ is the $a$-th percentile of a chi-square with degrees of freedom $d(n)$.\footnote{In both cases, we approximate $\mathbf{P}_{n}$ using the empirical cdf across MC repetitions.} \footnote{$\mathbb{K}^{B}_{n}$ is in fact the quantity of interest since, by construction, $1-a$ coincides with the empirical quantile of $Q^{\ast}_{n}$, thus $\mathbb{K}^{B}_{n}$ approximates $\sup_{a \in \mathbb{A}} \left|  \mathbf{P}_{n} \left(  Q_{n}   \geq t^{B}_{n}(a,Z^{n} )  \right) - \mathbf{P}^{\ast}_{n}(  Q^{\ast}_{n} \geq   t^{B}_{n}(a,Z^{n} )  \mid Z^{n}  ) \right| $. A similar observation holds for $\mathbb{K}_{n}$.  } The set $\mathbb{A}$ is given by $\{ 0.900,0.950,0.975,0.990  \}$. The typical application for our results is testing --- like in the Section \ref{sec:MST} ---, and with this in mind $\mathbb{A}$ is designed to capture the relevant values of $a$ for which we would like to  assess the performance of the approximation.
		  
		  \bigskip 
		  
		  \textbf{Approximation Error.} Figure \ref{fig:KB_K} shows the $\log (\mathbb{K}_{n}/\mathbb{K}^{B}_{n})$ for different values of the weighting matrix $V$ and for $n=500$ and $d(n) = 3$. When $V=I$ both, the chi-squared-based and boostrap-based procedures yield correct approximations of the limiting distribution, and thus the value is close to one. As expected, for cases where $V = (1+\epsilon/\sqrt{n}) I$ with $\epsilon \ne 0$, the chi-squared-based approximation does not approximate the limiting distribution, whereas the bootstrap-based continues to do so. The simulations shows that even for small values of $\epsilon/\sqrt{n}$, the difference is non-negligible. We note also that the deviations from $V=I$ we consider are \textquotedblleft mild" and we expect that for more complex deviations the results will be even more stark.
		  
		  \begin{figure}[h!]
		  	\centering
		  	\includegraphics[height=2.5in,width=\textwidth]{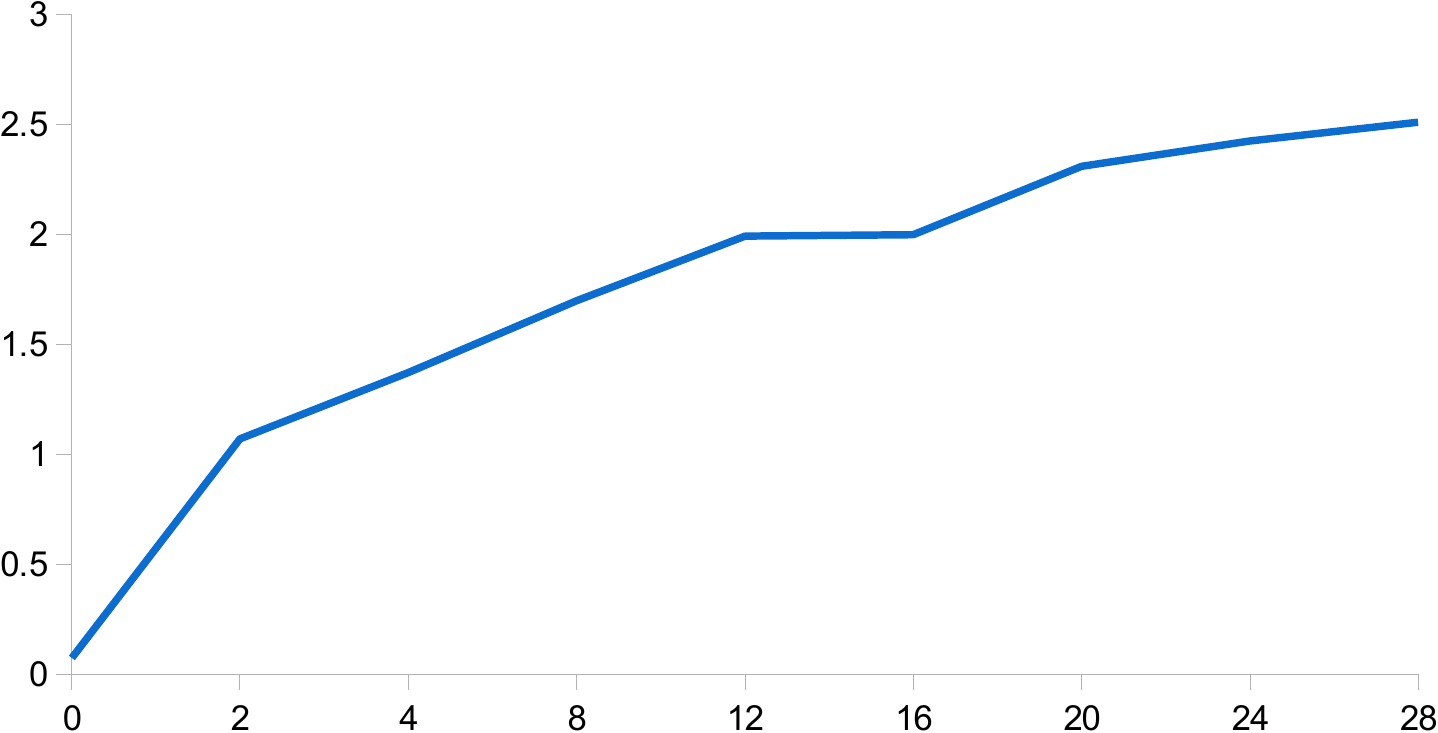}
		  		\caption{Plot of $\log (\mathbb{K}_{n}/\mathbb{K}^{B}_{n})$ for different values of $V=(1+\epsilon/\sqrt{n}) I$ for $\epsilon \in \{0,2,...,28\}$. }
		  			\label{fig:KB_K}
		  \end{figure}
		  
		  Table \ref{tab:approx} shows the value of $100 \times \left|  \mathbf{P}_{n} \left(  Q_{n}   \geq t^{B}_{n}(a,Z^{n} )  \right) - (1-a) \right|$ for each $a \in \mathbb{A}$. We can see that regardless of the value of $V$, the approximation error of our bootstrap procedure remains stable at low values, below 0.5\%. 
		  
		  \begin{table}[h!]
		  	\centering
		  	\begin{tabular}{c|ccccccccc}\hline \hline 
		  		   $\epsilon$         & $0$ & $4 $ & $8$ & $12$ & $16$ & $20$ & $24$ & $28 $ \\ \hline 
		  		   $a = 0.900$       &   0.06  & 0.50 & 0.40 & 0.30 & 0.20 & 0.02 & 0.20 & 0.10 \\
		  		     $a = 0.950$     &  0.06   & 0.30 & 0.22 & 0.02 & 0.04 &  0.02 & 0.28 & 0.14 \\
		  		     $a = 0.975$     &  0.30   & 0.20 & 0.08 & 0.02 & 0.14 & 0.26 & 0.06 & 0.06 \\
		  		       $a = 0.990 $  &  0.24   & 0.30 & 0.04 & 0.04 & 0.06 & 0.18 & 0.02 & 0.02 \\ \hline \hline 
		  	\end{tabular}
		  	\caption{$100 \times \left|  \mathbf{P}_{n} \left(  Q_{n}   \geq t^{B}_{n}(a,Z^{n} )  \right) - (1-a) \right|$  for all $a \in \mathbb{A}$ and different values of $\epsilon$; for $n=500$ and $d(n)=3$. }
		  	\label{tab:approx}
		  \end{table}

			  \begin{table}[h!]
			  	\centering	  	
			  	\begin{tabular}{l|rrrrr} \hline\hline 
			  		  & $n=250$ & $n=500$ & $n=1000$ & $n=2000$ & $n=3000$  \\ \hline 
			  		{\footnotesize{$\mathbb{K}^{B}_{n}/\mathbb{K}_{n}$ for $d(n) = n^{1/5}$}}       &   0.937       &  0.950   &  0.750  &  0.892 &  1.030  \\ 	   \hline\hline 
			  	\end{tabular}
			   	\caption{$\mathbb{K}^{B}_{n}/\mathbb{K}_{n}$ for $d(n) = n^{1/5}$ and different values of $n$.}
			   		\label{tab:convrate-simul}
			 
			  \end{table}	  
		  
%
		  
		  Table \ref{tab:convrate-simul} shows $\mathbb{K}^{B}_{n}/\mathbb{K}_{n}$ for $d(n) = n^{1/5}$. We see that for all $n$ under consideration the ratio is around one, and in almost all below one. These results suggest that, at least for the current design, the convergence rate of the bootstrap-based approximation is no worse than the one for the chi-squared-based.

		  \bigskip 
		  
		  \textbf{Robustness to $d(n)$ and choice of weights.}  We now assess how robust our procedure is to the choice of $d(n)$. Recall that, for this specification, our theory predicts that is sufficient to have $d(n) = o(n^{1/4})$; for values higher than this our theory is silent about the validity of our bootstrap procedure. We are thus particularly interested on the performance of our procedure for the latter set of values. In this exercise, we set $V=I$ and consider different values of $n$ and $d(n)$. 
		  
		  Table \ref{tab:d-simul} columns 2-5 shows the value of $100 \times \mathbb{K}^{B}_{n}$ for different choices of $d(n)$ and $n$. For values of $n$ less than 1000, the procedure seems to be quite robust to larger choices of $d(n)$ in the range of $n^{1/4}$ to $n^{1/2}$, but not higher. For values of $n$ around 2000-3000, however, our procedure seems to deteriorate for values of $d(n)$ larger than  $n^{1/2}$.

		  \begin{table}[h!]
		  	\centering	  	
		  \begin{tabular}{l||r|r|r|r|r} \hline\hline 
 $n  \setminus d(n)$     &$ n^{1/5} $ & $ n^{1/4} $ & $n^{1/3} $ & $ n^{1/2} $ & $n^{3/4} $  \\ \hline 
              $250$          &   0.300       &    0.440         &    0.332       & 
              0.440     &   2.440      \\
		  	  $500$       &    0.350   & 0.401       &    0.280        &  0.450      &    2.780      \\
		  	  $1000$     &    0.340   & 0.240       &   0.540    &  0.500      & 2.080 \\
		  	  $2000$      &    0.400   & 0.250       &   0.340    &  0.690      & 0.943 \\
		   	  $3000$     &    0.200   & 0.201       &   0.341    &  0.601      & 1.463 \\ 	   \hline\hline 
		  \end{tabular}
		  \caption{The value of $100 \times \mathbb{K}^{B}_{n}$ for different values of $(n,d(n))$.}
		  	\label{tab:d-simul}
		\end{table}

		 		  We now assess the robustness of our procedure to different choices of weights. We compare the Gaussian weights with two other weights: $\omega_{i} \sim U(-0.5,0.5)$ and $\omega_{i} \sim t-Student(3)$ (properly scaled to have unit variance). These choices are designed to study how different tail behavior of the weight's distribution affect the performance of our bootstrap procedure.
		 		  
		 		  In order to ease the computational burden we lower the bootstrap repetitions to 2000 each. Table \ref{tab:w-simul} presents the results. The overall pattern seems to suggest that the Gaussian and Uniform weights have comparable performances, and perform better than the t-Student weights. This pattern illustrates the discussion in Section \ref{sec:discussion} regarding desirable properties of weights.   
		 		  
		 		  		  \begin{table}[h!]
		 		  		  	\centering	  	
		 		  		  	\begin{tabular}{l|ccc} \hline\hline 
		 		  		  		  $n \setminus$ Weights  &  Gaussian & Uniform & t-Student  \\ \hline 
		 		  		  		  $250$       &  0.560    &  0.440   &   0.960        \\
		 		  		  		  $500$       & 0.500   & 0.370 & 0.980 \\
		 		  		  		  $1000$     &  0.139 &  0.319 & 0.400 \\
		 		  		  		  $2000$    &  0.240  &  0.340 &  0.660 \\
		 		  		  		  $3000$     &  0.180 & 0.200 & 0.400   \\	 \hline\hline 
		 		  		  	\end{tabular}
		 		  		  	\caption{The value of $100 \times \mathbb{K}^{B}_{n}$ for Gaussian, Uniform and t-Student weights with $d(n) = n^{1/5}$ .}
		 		  		  		\label{tab:w-simul}
		 		  		  \end{table}

		  \bigskip
		  
		  \textbf{Remarks.} Overall, the simulations suggest that our procedure has a finite sample performance that is at least as good as, and in some cases better than, the \textquotedblleft standard" chi-squared approach. Weights with \textquotedblleft thin tails" such as Uniform and Gaussian seem to perform better than weights with heavier tails. Additionally, as also discussed in the context of our application in Section \ref{sec:MST}, our bootstrap-based approximation can be applied in situations that go beyond those covered by the chi-square approach.

\section{Proof of Theorem \ref{thm:boot}}
\label{sec:proof-boot}

Recall that $x \in \mathbb{R}^{d(n)} \mapsto ||x||^{2}_{2} \equiv x^{T}x$ and that $\mathcal{C}_{M}$ is the class of functions $f : \mathbb{R} \rightarrow \mathbb{R}$ that are three times continuously differentiable and $ \sup_{x} | \partial^{r} f(x)| \leq (M)^{r}$. 

All the proofs of the lemmas in this section are relegated to Appendix \ref{app:lemmas-main}.

For any two probability measures $Q$ and $P$, let \begin{align}\label{eqn:WN}
	\Delta_{M}(P,Q) \equiv \sup_{f \in \mathcal{C}_{M}} | E_{P}[f(||X||^{2}_{2} )] - E_{Q}[f(||Y||^{2}_{2})]|.
\end{align}

\begin{remark}
	Throughout the text we use this definition for $X = n^{-1/2}\sum_{i=1}^{n} X_{i}$ and    $Y = n^{-1/2} \sum_{i=1}^{n} Y_{i}$, with $(X_{1},...,X_{n}) \sim P$ and $(Y_{1},...,Y_{n})  \sim Q$. For these cases, we abuse notation and use $\Delta_{M}(P,Q)$ to denote\begin{align*}
		\sup_{f \in \mathcal{C}_{M}} | E_{P}[f(||n^{-1/2}\sum_{i=1}^{n} X_{i} ||^{2}_{2} )] - E_{Q}[f(||n^{-1/2}\sum_{i=1}^{n} Y_{i}||^{2}_{2})]|.
	\end{align*} 
	
	Also, in the cases where $X_{i} \sim i.i.d.-P$, we abuse notation and still use $\Delta_{M}(P,Q)$ to denote the same quantity.
\end{remark}

We want to establish the following: For any $\varepsilon'>0$, there exists a $N(\varepsilon')$ such that 
\begin{align*}
	\mathbf{P}_{n} \left(  \sup_{t \in \mathbb{R}} \left| \mathbf{P}^{\ast}_{n} \left( ||\sqrt{n} \mathbb{Z}^{\ast}_{n}||^{2}_{2} \geq t \mid Z^{n}  \right)   -  \mathbf{P}_{n} \left( ||\sqrt{n}\mathbb{Z}_{n}||^{2}_{2}  \geq t   \right)     \right|   \geq \varepsilon'  \right) < \varepsilon'
\end{align*}
for all $n \geq N(\varepsilon')$. Observe that \begin{align*}
&	\mathbf{P}_{n} \left(  \sup_{t \in \mathbb{R}} \left| \mathbf{P}^{\ast}_{n} \left( ||\sqrt{n} \mathbb{Z}^{\ast}_{n}||^{2}_{2} \geq t \mid Z^{n}  \right)   -  \mathbf{P}_{n} \left( ||\sqrt{n}\mathbb{Z}_{n}||^{2}_{2}  \geq t   \right)     \right|   \geq \varepsilon'  \right) \\
	\leq & 	\mathbf{P}_{n} \left(  \{ \sup_{t \in \mathbb{R}} \left| \mathbf{P}^{\ast}_{n} \left( ||\sqrt{n} \mathbb{Z}^{\ast}_{n}||^{2}_{2} \geq t \mid Z^{n}  \right)   -  \mathbf{P}_{n} \left( ||\sqrt{n}\mathbb{Z}_{n}||^{2}_{2}  \geq t   \right)     \right| \geq \varepsilon' \} \cap S_{n}     \right)\\
	& +	\mathbf{P}_{n} \left(  S_{n}^{C} \right)
\end{align*}
where $S_{n} \equiv \{  Z^{n}   : n^{-1} \sum_{i=1}^{n} ||Z_{i}||^{2}_{2}  \leq  (0.5 \varepsilon')^{-1} tr \{ \Sigma_{n}  \}  \}$. By the Markov inequality $\mathbf{P}_{n} \left(  S_{n}^{C} \right) \leq 0.5 \varepsilon'$. 
Thus, it suffices to show that \begin{align}\label{eqn:boot-1}
	\mathbf{P}_{n} \left(  \left\{ \sup_{t \in \mathbb{R}} \left| \mathbf{P}^{\ast}_{n} \left( ||\sqrt{n} \mathbb{Z}^{\ast}_{n}||^{2}_{2} \geq t \mid Z^{n}  \right)   -  \mathbf{P}_{n} \left( ||\sqrt{n}\mathbb{Z}_{n}||^{2}_{2}  \geq t   \right)     \right| \geq \varepsilon' \right\} \cap S_{n}     \right) < 0.5 \varepsilon'.
\end{align}

By the triangle inequality, for all $t \in \mathbb{R}$ and $Z^{n}$
\begin{align*}
	& | E_{\mathbf{P}^{\ast}_{n}} \left[ 1\{  ||\sqrt{n}\mathbb{Z}^{\ast}_{n}||^{2}_{2}   \geq t \} \mid Z^{n}  \right]  -  E_{\mathbf{P}_{n}} \left[ 1\{  ||\sqrt{n}\mathbb{Z}_{n}||^{2}_{2}  \geq t   \}\right]   |\\
	\leq & | E_{\mathbf{P}^{\ast}_{n}} \left[ 1\{  ||\sqrt{n}\mathbb{Z}^{\ast}_{n}||^{2}_{2}   \geq t \} \mid Z^{n}  \right]  -  E_{\boldsymbol{\Phi}_{n}} \left[ 1\{  ||\sqrt{n}\mathbb{V}_{n}||^{2}_{2}   \geq t   \}  \right]   | \\
	& + | E_{\mathbf{P}_{n}} \left[ 1\{  ||\sqrt{n} \mathbb{Z}_{n}||^{2}_{2}   \geq t \}   \right]  -  E_{\boldsymbol{\Phi}_{n}} \left[ 1\{  ||\sqrt{n}\mathbb{V}_{n}||^{2}_{2}  \geq t   \} \right]   | 
\end{align*}
where $\sqrt{n} \mathbb{V}_{n} \sim N(0,\Sigma_{n})$. We use $\boldsymbol{\Phi}_{n}$ to denote this probability.

Therefore, in order to obtain display \ref{eqn:boot-1}, it suffices to bound\begin{align}\notag
	& \mathbf{P}_{n} \left(  \{ \sup_{t \in \mathbb{R}} \left| E_{\mathbf{P}^{\ast}_{n}} \left[  1 \{  ||\sqrt{n} \mathbb{Z}^{\ast}_{n}||^{2}_{2} \geq t \}  \mid Z^{n}  \right]   -  E_{\boldsymbol{\Phi}_{n}} \left[ 1 \{  ||\sqrt{n}\mathbb{V}_{n}||^{2}_{2}  \geq t  \} \right]    \right| \geq 0.5 \varepsilon' \} \cap S_{n}     \right)\\ \label{eqn:boot-2a}
	&  < 0.25 \varepsilon'
\end{align}
and 
\begin{align}\label{eqn:boot-2b}
 \lim_{n \rightarrow \infty }   \sup_{t \in \mathbb{R}} \left| E_{\mathbf{P}_{n}} \left[  1 \{  ||\sqrt{n} \mathbb{Z}_{n}||^{2}_{2} \geq t \}  \right]   -  E_{\boldsymbol{\Phi}_{n}} \left[ 1 \{  ||\sqrt{n}\mathbb{V}_{n}||^{2}_{2}  \geq t  \} \right]    \right| =0.
\end{align}

The next two lemmas allow us to  \textquotedblleft replace" the indicator functions by  \textquotedblleft smooth" functions. 

\begin{lemma}\label{lem:Delta-P-Pr} 
	Suppose Assumption \ref{ass:data-Z}(i) holds. For any $\varepsilon>0$, there exists a $\gamma(\varepsilon)$ and $N(\varepsilon)$ such that for all $n \geq N(\varepsilon)$ and all $h \leq h(\varepsilon,\sqrt{tr\{ \Sigma^{2}_{n} \}} \gamma(\varepsilon))$
	\begin{align}
	& \sup_{t \in \mathbb{R}} \left| E_{\mathbf{P}_{n}} \left[ 1\{ ||\sqrt{n} \mathbb{Z}_{n}||^{2}_{2} \geq t \}  \right] - E_{\boldsymbol{\Phi}_{n}} \left[ 1\{ ||\sqrt{n}\mathbb{V}_{n}||^{2}_{2} \geq t \}   \right] \right| \\
	\leq &   \frac{\varepsilon}{1-\varepsilon} + 3 \varepsilon + \Delta_{h^{-1}}(\mathbf{P}_{n},\boldsymbol{\Phi}_{n}).
	\end{align}
\end{lemma}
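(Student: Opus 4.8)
The plan is to approximate the indicator $x \mapsto 1\{x \geq t\}$ from above and below by smooth functions $\mathcal{P}_{t,\delta,h}$ in the class $\mathcal{C}_{h^{-1}}$ (following the Pollard construction), and then to control two sources of error: the bias introduced by smoothing, and the discrepancy $\Delta_{h^{-1}}(\mathbf{P}_n,\boldsymbol{\Phi}_n)$ between the two laws measured against $\mathcal{C}_{h^{-1}}$. Concretely, I would fix $t$ and choose a smooth nondecreasing $\mathcal{P}$ with $1\{x \geq t+\delta\} \leq \mathcal{P}_{t,\delta,h}(x) \leq 1\{x \geq t-\delta\}$ and bounded derivatives $|\partial^r \mathcal{P}_{t,\delta,h}| \precsim h^{-r}$ (for this one needs $h$ small relative to $\delta$, which is where the hypothesis $h \leq h(\varepsilon, \sqrt{tr\{\Sigma_n^2\}}\gamma(\varepsilon))$ enters). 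Then
\begin{align*}
E_{\mathbf{P}_n}[1\{\|\sqrt{n}\mathbb{Z}_n\|_2^2 \geq t\}] - E_{\boldsymbol{\Phi}_n}[1\{\|\sqrt{n}\mathbb{V}_n\|_2^2 \geq t\}] \leq{}& E_{\mathbf{P}_n}[\mathcal{P}_{t-\delta,\delta,h}(\|\sqrt{n}\mathbb{Z}_n\|_2^2)] - E_{\boldsymbol{\Phi}_n}[\mathcal{P}_{t-\delta,\delta,h}(\|\sqrt{n}\mathbb{V}_n\|_2^2)]\\
&{}+ E_{\boldsymbol{\Phi}_n}[\mathcal{P}_{t-\delta,\delta,h}(\|\sqrt{n}\mathbb{V}_n\|_2^2)] - E_{\boldsymbol{\Phi}_n}[1\{\|\sqrt{n}\mathbb{V}_n\|_2^2 \geq t\}],
\end{align*}
and symmetrically from below, so that the whole quantity is bounded by $\Delta_{h^{-1}}(\mathbf{P}_n,\boldsymbol{\Phi}_n)$ plus a term of the form $\boldsymbol{\Phi}_n(\|\sqrt{n}\mathbb{V}_n\|_2^2 \in [t-\delta, t+\delta])$, uniformly over $t$.

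The next step is to bound that anti-concentration term. I would invoke Lemma \ref{lem:anticoncentration-V} (referenced in the Heuristics discussion), which says that $\|\sqrt{n}\mathbb{V}_n\|_2^2$ — a quadratic form in a Gaussian with covariance $\Sigma_n$ whose eigenvalues lie in $[c,C]$ — puts little mass on any interval; quantitatively the mass in an interval of length $2\delta$ is of order $\delta/\sqrt{tr\{\Sigma_n^2\}}$. Choosing $\delta = \gamma(\varepsilon)\sqrt{tr\{\Sigma_n^2\}}$ with $\gamma(\varepsilon)$ small makes this contribution at most $\varepsilon$ (or $3\varepsilon$, accounting for the upper/lower bracketing and a possible extra additive slack). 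The stray term $\varepsilon/(1-\varepsilon)$ presumably comes from a slightly lossy step in the Pollard construction or from a union/normalization in handling the tails of $\mathcal{P}$; I would track it through the explicit form of $\mathcal{P}_{t,\delta,h}$. Since all bounds are uniform in $t$ once $\delta$ is fixed (the anti-concentration bound is a supremum over interval locations), taking the supremum over $t$ on the left preserves the inequality.

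The main obstacle is getting the smoothing scheme and the anti-concentration estimate to mesh: one needs $\delta$ large enough that the smooth approximant can have derivatives only as large as $h^{-r}$ while still straddling the indicator, yet the anti-concentration bound must remain useful at that scale of $\delta$. The key realization — emphasized in the Heuristics — is that because $tr\{\Sigma_n^2\}$ is of order $d(n) \to \infty$, one can afford $\delta$ as large as $\gamma(\varepsilon)\sqrt{tr\{\Sigma_n^2\}}$ and still keep the mass in $[t-\delta,t+\delta]$ below $\varepsilon$; this is precisely why $h$ is permitted to be as large as a constant times $\sqrt{tr\{\Sigma_n^2\}}$ in the statement, which later keeps the error term $o(h^{-2})$ from Theorems \ref{thm:weak-norm-boot}–\ref{thm:weak-norm-ori} negligible. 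I would therefore organize the proof around first fixing $\varepsilon$, then $\gamma(\varepsilon)$ small enough for the anti-concentration bound, then $N(\varepsilon)$ and the threshold $h(\varepsilon,\cdot)$ so that the Pollard construction is valid, and finally assembling the three pieces — smoothing bias above, smoothing bias below, and the $\Delta_{h^{-1}}$ term — via the triangle inequality.
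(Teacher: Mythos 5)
Your proposal follows essentially the same route as the paper: sandwich the indicator by the Pollard-type smoothed ramp $\mathcal{P}_{t,\delta,h}$ (whose derivatives are bounded by $h^{-r}$, so it lies in $\mathcal{C}_{h^{-1}}$ and the smooth discrepancy is absorbed into $\Delta_{h^{-1}}(\mathbf{P}_{n},\boldsymbol{\Phi}_{n})$), and control the smoothing bias via the Gaussian anti-concentration Lemma at scale $\delta_{n}=\sqrt{tr\{\Sigma_{n}^{2}\}}\,\gamma(\varepsilon)$, which is exactly how the paper proves it (via the proof of Lemma \ref{lem:Delta-Pwz-Pr}, applying Remark \ref{rem:anticon-V} three times, whence the $3\varepsilon$, with the $\varepsilon/(1-\varepsilon)$ arising from the multiplicative $(1-\varepsilon)$ losses in Lemmas \ref{lem:ZZ-strong-weak} and \ref{lem:Vn-strong-weak} rather than an exact pointwise bracketing). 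The only cosmetic difference is that your claimed exact sandwich $1\{x\geq t+\delta\}\leq\mathcal{P}_{t,\delta,h}(x)\leq 1\{x\geq t-\delta\}$ holds only up to those $(1-\varepsilon)$ and additive $\varepsilon$ corrections, which you correctly anticipate tracking.
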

\noindent (Recall that, $\Delta_{h^{-1}}(\mathbf{P}_{n},\boldsymbol{\Phi}_{n}) = \sup_{f \in \mathcal{C}_{h^{-1}}} \left|  E_{\mathbf{P}_{n}} \left[ f \left( ||\sqrt{n} \mathbb{Z}_{n}||^{2}_{2}  \right)  \right] - E_{\boldsymbol{\Phi}_{n}} \left[ f \left(||\sqrt{n} \mathbb{V}_{n}||^{2}_{2} \right)   \right]  \right|$). And

\begin{lemma}\label{lem:Delta-Pwz-Pr} 
	Suppose Assumption \ref{ass:data-Z}(i) holds. For any $\varepsilon>0$, there exists a $\gamma(\varepsilon)$ and $N(\varepsilon)$ such that for all $n \geq N(\varepsilon)$ and all $h \leq h(\varepsilon,\sqrt{tr\{ \Sigma^{2}_{n} \}} \gamma(\varepsilon))$
	\begin{align} \notag
	&  \sup_{t \in \mathbb{R}} \left| E_{\mathbf{P}^{\ast}_{n}} \left[ 1\{ ||\sqrt{n} \mathbb{Z}^{\ast}_{n}||^{2}_{2} \geq t \}  |Z^{n} \right] - E_{Pr} \left[ 1\{ ||\sqrt{n}\mathbb{V}_{n}||^{2}_{2} \geq t \}   \right] \right|  \\ \label{eqn:Delta-Pwz-Pr-1}
	\leq &  \frac{\varepsilon}{1-\varepsilon} + 3 \varepsilon + \Delta_{h^{-1}}(\mathbf{P}^{\ast}_{n}(\cdot|Z^{n}),\boldsymbol{\Phi}_{n}),
	\end{align}
	for any $Z^{n} \in \mathbb{R}^{d(n)}$.
\end{lemma}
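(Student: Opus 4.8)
The argument mirrors the proof of Lemma \ref{lem:Delta-P-Pr}, with $\mathbf{P}_{n}$ and $\sqrt{n}\mathbb{Z}_{n}$ replaced throughout by the conditional bootstrap law $\mathbf{P}^{\ast}_{n}(\cdot|Z^{n})$ and $\sqrt{n}\mathbb{Z}^{\ast}_{n}$. The key observation is that the ``smoothing bias'' --- the error incurred by replacing the indicator $s\mapsto 1\{s\geq t\}$ by a smooth approximant --- is controlled entirely through the Gaussian law $\boldsymbol{\Phi}_{n}$ of $\sqrt{n}\mathbb{V}_{n}\sim N(0,\Sigma_{n})$, which is the \emph{same} object in both lemmas; consequently the bound can be made to hold for every realization $Z^{n}$, with no $o_{\mathbf{P}_{n}}(1)$ qualifier. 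The three ingredients are: (a) the Pollard-type smooth sandwich for the step function built in Lemma \ref{lem:Zn-strong-weak}; (b) the anti-concentration estimate for $||\sqrt{n}\mathbb{V}_{n}||^{2}_{2}$ of Lemma \ref{lem:anticoncentration-V}; and (c) the definition of the weak distance $\Delta_{h^{-1}}(\cdot,\cdot)$.

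Fix $\varepsilon>0$ and $t\in\mathbb{R}$, let $\delta>0$ be a truncation width (to be specialized to $\delta=\gamma(\varepsilon)\sqrt{tr\{\Sigma^{2}_{n}\}}$), and take $h$ small relative to $\delta$. By Lemma \ref{lem:Zn-strong-weak} there are functions $\mathcal{P}^{+}_{t,\delta,h}$ and $\mathcal{P}^{-}_{t,\delta,h}$ lying in $\mathcal{C}_{h^{-1}}$ with $\mathcal{P}^{-}_{t,\delta,h}(s)\leq 1\{s\geq t\}\leq\mathcal{P}^{+}_{t,\delta,h}(s)$ for every $s$, both agreeing with $1\{s\geq t\}$ outside an interval of length at most $\delta$ around $t$; the hypothesis $h\leq h(\varepsilon,\sqrt{tr\{\Sigma^{2}_{n}\}}\gamma(\varepsilon))$ is precisely what makes the transition window of these functions wide enough that their first three derivatives are bounded by $h^{-1},h^{-2},h^{-3}$, i.e.\ that they genuinely belong to $\mathcal{C}_{h^{-1}}$. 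Using the upper sandwich on the bootstrap side, adding and subtracting the Gaussian expectation of $\mathcal{P}^{+}_{t,\delta,h}$, and then doing the symmetric manipulation with the lower sandwich, one obtains
\begin{align*}
& \left| E_{\mathbf{P}^{\ast}_{n}}\left[ 1\{||\sqrt{n}\mathbb{Z}^{\ast}_{n}||^{2}_{2}\geq t\} \mid Z^{n}\right] - \boldsymbol{\Phi}_{n}\left( ||\sqrt{n}\mathbb{V}_{n}||^{2}_{2}\geq t\right) \right|\\
\leq\ & \Delta_{h^{-1}}(\mathbf{P}^{\ast}_{n}(\cdot|Z^{n}),\boldsymbol{\Phi}_{n}) + \sup_{I:\,|I|\leq\delta} \boldsymbol{\Phi}_{n}\left( ||\sqrt{n}\mathbb{V}_{n}||^{2}_{2}\in I\right),
\end{align*}
the first term because $\mathcal{P}^{\pm}_{t,\delta,h}\in\mathcal{C}_{h^{-1}}$, the second because $\mathcal{P}^{\pm}_{t,\delta,h}$ differs from the indicator only on a $\delta$-window.

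It remains to bound the last supremum uniformly in $t$, and this is where Lemma \ref{lem:anticoncentration-V} enters: it gives that $||\sqrt{n}\mathbb{V}_{n}||^{2}_{2}$ places on any interval $I$ a mass of order $|I|/\sqrt{tr\{\Sigma^{2}_{n}\}}$ plus a lower-order remainder, so that taking $\delta=\gamma(\varepsilon)\sqrt{tr\{\Sigma^{2}_{n}\}}$ with $\gamma(\varepsilon)$ a sufficiently small constant and $n\geq N(\varepsilon)$ to absorb the remainder bounds $\sup_{|I|\leq\delta}\boldsymbol{\Phi}_{n}(||\sqrt{n}\mathbb{V}_{n}||^{2}_{2}\in I)$ by $\frac{\varepsilon}{1-\varepsilon}+3\varepsilon$ (the constants here also picking up the slack in the Pollard construction of Lemma \ref{lem:Zn-strong-weak}). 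Since $\delta$, the constant $\gamma(\varepsilon)$, and every Gaussian quantity above are independent of $Z^{n}$, taking $\sup_{t\in\mathbb{R}}$ yields (\ref{eqn:Delta-Pwz-Pr-1}) for every $Z^{n}\in\mathbb{R}^{d(n)}$.

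\textbf{Main obstacle.} The only genuinely delicate point is the opposing roles of the two bandwidths: $\delta$ must be \emph{large}, of order $\sqrt{tr\{\Sigma^{2}_{n}\}}$, so that the chi-square-type anti-concentration of Lemma \ref{lem:anticoncentration-V} makes the smoothing bias negligible, while at the same time $h$ must be \emph{small} relative to $\delta$ so that the sandwich functions stay inside the test class $\mathcal{C}_{h^{-1}}$ --- reconciling these is exactly the constraint $h\leq h(\varepsilon,\sqrt{tr\{\Sigma^{2}_{n}\}}\gamma(\varepsilon))$. Beyond that, existence and derivative control of $\mathcal{P}^{\pm}_{t,\delta,h}$ and the anti-concentration bound are supplied by Lemmas \ref{lem:Zn-strong-weak} and \ref{lem:anticoncentration-V}, so the remaining work is routine bookkeeping, identical to that in the proof of Lemma \ref{lem:Delta-P-Pr}.
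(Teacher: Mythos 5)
Your proposal follows essentially the same route as the paper's proof: sandwich the indicator by the Pollard-type smooth functions of Lemma \ref{lem:Zn-strong-weak} with $\delta_{n}=\gamma(\varepsilon)\sqrt{tr\{\Sigma^{2}_{n}\}}$ and $h\leq h(\varepsilon,\delta_{n})$, control the smoothing bias through the anti-concentration result for $||\sqrt{n}\mathbb{V}_{n}||^{2}_{2}$ (Lemma \ref{lem:anticoncentration-V} and Remark \ref{rem:anticon-V}), and bound the remaining smoothed discrepancy by $\Delta_{h^{-1}}(\mathbf{P}^{\ast}_{n}(\cdot|Z^{n}),\boldsymbol{\Phi}_{n})$ after checking $\mathcal{P}_{t,\delta,h}\in\mathcal{C}_{h^{-1}}$. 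The only cosmetic difference is bookkeeping: the paper's Lemmas \ref{lem:Zn-strong-weak} and \ref{lem:Vn-strong-weak} deliver expectation inequalities with $\frac{1}{1-\varepsilon}$ factors (the source of the $\frac{\varepsilon}{1-\varepsilon}$ term) rather than an exact pointwise sandwich, and the $3\varepsilon$ comes from shifting the Gaussian threshold by $3\delta_{n}$ via Remark \ref{rem:anticon-V}, but these details do not change the substance of your argument.
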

	
	\begin{remark}
	The previous lemma holds for any $h$ provided that is below $h \leq h(\varepsilon,\sqrt{tr\{ \Sigma^{2}_{n} \}} \gamma(\varepsilon) )$. The intuition from this restriction is as follows: $h$ and $\delta_{n} \equiv \sqrt{tr\{ \Sigma^{2}_{n} \}} \gamma(\varepsilon)$ index the  \textquotedblleft smooth" function we use to approximate $x \mapsto 1\{ ||x||^{2}_{2} \geq t \}$; see Lemma \ref{lem:Zn-strong-weak} in the Appendix for a precise expression. It turns out that $h$ has to be  \textquotedblleft small" relative to $\delta_{n}$. Therefore, we need the bound $h(\varepsilon,\delta_{n} )$.
		
		It is worth to note that, for the  \textquotedblleft smooth" function to be a good approximation of $1\{ ||\cdot||^{2}_{2} \geq t \}$, we need $\delta_{n}$ to be \textquotedblleft small'' (see the proof of Lemma \ref{lem:Delta-Pwz-Pr} in the Appendix).   What we mean by $\delta_{n}$ to be \textquotedblleft small'' depends on how $||\sqrt{n}\mathbb{V}_{n}||^{2}_{2}$ concentrates mass. Lemma \ref{lem:anticoncentration-V} establishes an anti-concentration result, wherein we obtain that this random variable puts very little mass in any given interval. Therefore $\delta_{n}$ could actually be quite large, of the order of $\sqrt{tr\{ \Sigma^{2}_{n} \}}$.
	\end{remark}

	Therefore, by letting $\varepsilon$ in the lemmas be such that $\frac{\varepsilon}{1-\varepsilon} + 3 \varepsilon  = 0.25 \varepsilon'$ we obtain
	\begin{align} \label{eqn:boot-3}
	& \sup_{t \in \mathbb{R}} \left| E_{\mathbf{P}_{n}} \left[  1 \{  ||\sqrt{n} \mathbb{Z}_{n}||^{2}_{2} \geq t \}  \right]   -  E_{\boldsymbol{\Phi}_{n}} \left[ 1 \{  ||\sqrt{n}\mathbb{V}_{n}||^{2}_{2}  \geq  t  \} \right]    \right| \leq  0.25\varepsilon'  + \Delta_{h^{-1}}(\mathbf{P}_{n},\boldsymbol{\Phi}_{n})  
	\end{align}
	and
	\begin{align}\notag
	& \mathbf{P}_{n} \left(  \{ \sup_{t \in \mathbb{R}} \left| E_{\mathbf{P}^{\ast}_{n}} \left[  1 \{  ||\sqrt{n} \mathbb{Z}^{\ast}_{n}||^{2}_{2} \geq t \}  \mid Z^{n}  \right]   -  E_{\boldsymbol{\Phi}_{n}} \left[ 1 \{  ||\sqrt{n}\mathbb{V}_{n}||^{2}_{2}  \geq t  \} \right]    \right| \geq 0.5 \varepsilon' \} \cap S_{n}     \right) \\ \label{eqn:boot-4}
	& \leq 		\mathbf{P}_{n} \left(  \{ \Delta_{h^{-1}}(\mathbf{P}^{\ast}_{n}(\cdot|Z^{n}),\boldsymbol{\Phi}_{n}) \geq 0.25 \varepsilon' \} \cap S_{n}     \right) 
	\end{align}
	for all $n \geq N(\varepsilon)$ and all $h \leq h(\varepsilon,\delta_{n})$ (note that $\varepsilon$ is a function of $\varepsilon'$).

		  By the triangle inequality and straightforward algebra, it follows that\begin{align*}
		  & \mathbf{P}_{n} \left(  \{ \Delta_{h^{-1}}(\mathbf{P}^{\ast}_{n}(\cdot|Z^{n}),\boldsymbol{\Phi}_{n}) \geq 0.25 \varepsilon' \} \cap S_{n}     \right)  \\
		  & \leq  \mathbf{P}_{n} \left(  \{ \Delta_{h^{-1}}(\mathbf{P}^{\ast}_{n}(\cdot|Z^{n}),\boldsymbol{\Phi}^{\ast}_{n}(\cdot|Z^{n} )) \geq \frac{1}{8} \varepsilon' \} \cap S_{n}     \right) \\
		  &~  + \mathbf{P}_{n} \left(  \{ \Delta_{h^{-1}}( \boldsymbol{\Phi}^{\ast}_{n}(\cdot|Z^{n} ), \boldsymbol{\Phi}_{n} ) \geq \frac{1}{8} \varepsilon' \} \cap S_{n}     \right)   
		  \end{align*}
		  where $\boldsymbol{\Phi}^{\ast}_{n}(\cdot|Z^{n})$ denotes the conditional probability (given the original data $Z^{n}$ )  $\sqrt{n} \mathbb{U}_{n} \sim N(0,n^{-1} \sum_{i=1}^{n} Z_{i} Z^{T}_{i})$

		  Hence, by the previous display and Equations \ref{eqn:boot-1}, \ref{eqn:boot-2a}-\ref{eqn:boot-2b}, \ref{eqn:boot-3} and \ref{eqn:boot-4}, in order to show the desired result it suffices to show that: For all $\varepsilon'$, there exists a $N(\varepsilon')$ such that	  
		  \begin{align} \label{eqn:final-bdd-1}
		  &	\mathbf{P}_{n} \left(  \{ \Delta_{h^{-1}}(\mathbf{P}^{\ast}_{n}(\cdot|Z^{n}),\boldsymbol{\Phi}^{\ast}_{n}(\cdot|Z^{n} ) )\geq  \varepsilon' \} \cap S_{n}     \right)  < \varepsilon',\\ \label{eqn:final-bdd-2}
		  &	\mathbf{P}_{n} \left(  \{ \Delta_{h^{-1}}( \boldsymbol{\Phi}^{\ast}_{n}(\cdot|Z^{n} ), \boldsymbol{\Phi}_{n} ) \geq  \varepsilon' \} \cap S_{n}     \right)   < \varepsilon',\\ \label{eqn:final-bdd-3}
		  &~and~\Delta_{h^{-1}}(\mathbf{P}_{n},\boldsymbol{\Phi}_{n}) < \varepsilon' 
		  \end{align}
		  for all $n \geq N(\varepsilon')$ and some $h \leq h(\varepsilon,\sqrt{tr\{ \Sigma^{2}_{n} \}} \gamma(\varepsilon) )$. Theorems \ref{thm:weak-norm-boot} and \ref{thm:weak-norm-ori} establish expressions \ref{eqn:final-bdd-1} and \ref{eqn:final-bdd-3}. 
		  
		  \begin{remark}
		  	From Lemma \ref{lem:Vn-strong-weak}, $h(\varepsilon,\sqrt{tr\{ \Sigma^{2}_{n} \}} \gamma(\varepsilon) ) = \sqrt{tr\{ \Sigma^{2}_{n} \}} \gamma(\varepsilon)  / \Phi^{-1}(\varepsilon) $ and thus $h$ can be taken to be proportional (up to a constant that depends on $\varepsilon$)  to $\sqrt{tr\{ \Sigma_{n}^{2}  \}}$. Hence, under Assumption \ref{ass:data-Z}(i),  $h$ can be taken to be such that $h^{-2} \precsim d(n)^{-1}  $. Therefore, Theorems \ref{thm:weak-norm-boot} and \ref{thm:weak-norm-ori} actually imply a stronger result: $\Delta_{h^{-1}}(\mathbf{P}_{n},\boldsymbol{\Phi}_{n})  = o(d(n)^{-1})$ and $\Delta_{h^{-1}}(\mathbf{P}^{\ast}_{n}(\cdot|Z^{n}),\boldsymbol{\Phi}^{\ast}_{n}(\cdot|Z^{n} ) ) = o_{\mathbf{P}_{n}} (d(n)^{-1})$.
		  \end{remark}

		  We have thus reduced the original problem to a Gaussian approximation problem. That is, it remains to show that 
		  \begin{align}\label{eqn:boot-5}
		  \mathbf{P}_{n} \left( \{ \Delta_{h^{-1}}(\boldsymbol{\Phi}^{\ast}_{n}(\cdot|Z^{n}),\boldsymbol{\Phi}_{n}) \geq \varepsilon' \} \cap S_{n}     \right) < \varepsilon'.
		  \end{align} 
		  
		  Since $\sqrt{n} \mathbb{U}_{n} \sim N(0,\hat{\Sigma}_{n})$ (with $\hat{\Sigma}_{n} = n^{-1} \sum_{i=1}^{n} Z_{i}Z_{i}^{T}$) and $\sqrt{n} \mathbb{V}_{n} \sim N(0,\Sigma_{n})$, the previous display is equivalent to showing that 
		  \begin{align*}
		  \mathbf{P}_{n} \left( \{ \Delta_{h^{-1}}(N(0,\hat{\Sigma}_{n}),N(0,\Sigma_{n})) \geq \varepsilon' \} \cap S_{n}     \right) < \varepsilon'.
		  \end{align*} 
		  
		  Essentially, this expression follows by the fact that $\hat{\Sigma}_{n}$ converges in probability to $\Sigma_{n}$ in a suitable norm. The following lemma formalizes this.
		  
		  \begin{lemma}\label{lem:slep} 
		  	For any $h >0$ and any $n \in \mathbb{N}$
		  	\begin{align*}
		  	\Delta_{h^{-1}}(\boldsymbol{\Phi}^{\ast}_{n}(\cdot| Z^{n} ),\boldsymbol{\Phi}_{n})  \precsim & \max_{j,l} \left| \left\{ n^{-1}
		  	\sum_{i=1}^{n} Z_{[j],i}
		  	Z_{[l],i} - \Sigma_{[l,j]} \right\} \right| \\
		  	& \times  h^{-1} d(n) \left( h^{-1}  tr\{ \Sigma_{n}  \}  + h^{-1} tr\{ \hat{\Sigma}_{n}  \}  + 2   \right).
		  	\end{align*}
		  \end{lemma}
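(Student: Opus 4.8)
The plan is to run a Slepian-type Gaussian interpolation along the straight-line path joining the two covariance matrices. Fix $f\in\mathcal{C}_{h^{-1}}$ and write $g(x)\equiv f(\|x\|_2^2)$. Treating $\hat{\Sigma}_{n}=n^{-1}\sum_{i=1}^{n}Z_iZ_i^{T}$ as fixed given $Z^{n}$, let $W\sim N(0,\Sigma_{n})$ and $\hat W\sim N(0,\hat{\Sigma}_{n})$ be independent, and for $t\in[0,1]$ set $G(t)\equiv\sqrt{t}\,\hat W+\sqrt{1-t}\,W$, so that $G(t)\sim N(0,\Sigma(t))$ with $\Sigma(t)\equiv t\hat{\Sigma}_{n}+(1-t)\Sigma_{n}$. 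Define $\psi(t)\equiv E[g(G(t))]$; then $\psi(1)-\psi(0)$ is exactly the difference of expectations appearing inside $\Delta_{h^{-1}}(\boldsymbol{\Phi}^{\ast}_{n}(\cdot|Z^{n}),\boldsymbol{\Phi}_{n})$ for this $f$. Since $f$ is $C^{3}$ with bounded derivatives and $G(t)$ has moments of all orders, I would differentiate under the expectation for $t\in(0,1)$ and apply Gaussian integration by parts (Stein's identity), conditioning on $W$ for the $\hat W$-term and on $\hat W$ for the $W$-term; the factors $1/(2\sqrt{t})$ and $1/(2\sqrt{1-t})$ coming from $G'(t)$ cancel against the $\sqrt{t}$ and $\sqrt{1-t}$ produced by the identity, leaving the bounded, integrable expression
\[
\psi'(t)=\tfrac12\sum_{j,l}\bigl(\hat{\Sigma}_{[j,l]}-\Sigma_{[j,l]}\bigr)\,E\bigl[\partial^{2}_{jl}g(G(t))\bigr].
\]

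Next I would bound the Hessian of $g$ directly: $\partial^{2}_{jl}g(x)=4f''(\|x\|_2^2)\,x_{[j]}x_{[l]}+2f'(\|x\|_2^2)\,1\{j=l\}$. Using $|f'|\le h^{-1}$, $|f''|\le h^{-2}$ from $f\in\mathcal{C}_{h^{-1}}$, the elementary inequality $\sum_{j,l}|x_{[j]}x_{[l]}|=\|x\|_1^2\le d(n)\|x\|_2^2$, and $E[\|G(t)\|_2^2]=tr\{\Sigma(t)\}\le tr\{\Sigma_{n}\}+tr\{\hat{\Sigma}_{n}\}$, one gets
\[
\sum_{j,l}\bigl|E[\partial^{2}_{jl}g(G(t))]\bigr|\le 4h^{-2}d(n)\bigl(tr\{\Sigma_{n}\}+tr\{\hat{\Sigma}_{n}\}\bigr)+2h^{-1}d(n).
\]
Inserting this into the formula for $\psi'(t)$, factoring out $\max_{j,l}|\hat{\Sigma}_{[j,l]}-\Sigma_{[j,l]}|$, and using $|\psi(1)-\psi(0)|\le\int_0^1|\psi'(t)|\,dt$ (with a $t$-independent integrand bound) yields a bound of the claimed form $\max_{j,l}\bigl|n^{-1}\sum_i Z_{[j],i}Z_{[l],i}-\Sigma_{[l,j]}\bigr|\cdot h^{-1}d(n)\bigl(h^{-1}tr\{\Sigma_{n}\}+h^{-1}tr\{\hat{\Sigma}_{n}\}+2\bigr)$ up to a universal constant. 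Since this bound is uniform in $f$, taking the supremum over $f\in\mathcal{C}_{h^{-1}}$ completes the argument.

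I expect the only delicate point to be the rigorous justification of the interpolation step — differentiating $\psi$ through the expectation and performing the integration by parts despite the $1/\sqrt{t}$, $1/\sqrt{1-t}$ singularities of $G'(t)$ at the endpoints — while keeping all constants universal. This can be handled by the cancellation noted above (the integrand of $\psi'$ extends continuously to $[0,1]$ because $\partial^{2}_{jl}g$ has at most quadratic growth and the Gaussians have all moments), or, more cleanly, by invoking the standard Gaussian interpolation / covariance-derivative identity $\frac{d}{dt}E[\phi(\xi_{\Sigma(t)})]=\tfrac12\sum_{j,l}\dot{\Sigma}(t)_{[j,l]}\,E[\partial^{2}_{jl}\phi(\xi_{\Sigma(t)})]$ for $\xi_{\Sigma(t)}\sim N(0,\Sigma(t))$ (see \cite{Rollin2013}, \cite{CCK2013}), applied with $\phi=g$, which bypasses the endpoint issue entirely. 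Everything else is routine bookkeeping.
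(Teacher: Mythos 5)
Your proposal is correct and follows essentially the same route as the paper: a Slepian/Gaussian interpolation between $N(0,\hat{\Sigma}_{n})$ and $N(0,\Sigma_{n})$, Stein's identity to convert the derivative along the path into second derivatives of $g(x)=f(\|x\|_2^2)$ weighted by the covariance differences, and the same Hessian bound $\sum_{j,l}|E[\partial^2_{jl}g]|\precsim h^{-2}d(n)(tr\{\Sigma_{n}\}+tr\{\hat{\Sigma}_{n}\})+h^{-1}d(n)$. The only difference is the parametrization of the path ($\sqrt{t},\sqrt{1-t}$ versus the paper's $\sin(t),\cos(t)$), a variant the paper itself notes is equivalent, and your handling of the endpoint singularities is adequate.
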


		  Observe that for any $ Z^{n} \in S_{n} = \{  Z^{n}   : n^{-1} \sum_{i=1}^{n} ||Z_{i}||^{2}_{2}  \leq  (0.5 \varepsilon')^{-1} tr \{ \Sigma_{n}  \} \}$, the RHS of the expression in the Lemma is bounded above by\begin{align*}
		  	\frac{d(n)}{h}  \{ \frac{tr\{ \Sigma_{n} \} }{h \varepsilon' } + 2 \}.
		  \end{align*}
		  
		  Thus by Lemma \ref{lem:slep}, in order to establish the desired result, it suffices to show that 
		  \begin{align}\label{eqn:gauss-1}
		  \mathbf{P}_{n} \left( \max_{j,l} \left| n^{-1} \sum_{i=1}^{n} Z_{[l],i}Z_{[j],i} - \Sigma_{[j,l]}  \right|  \geq \frac{ (\varepsilon')^{2}}{ d(n) h^{-2} tr\{ \Sigma_{n} \}  } \cap S_{n}     \right) < \varepsilon'  
		  \end{align}
		  for sufficiently large $n$. Henceforth, let $c_{n} \equiv \frac{ (\varepsilon')^{2}}{ d(n) h^{-2} tr\{ \Sigma_{n} \}  } $ and let $\mathbf{A}_{i,n}[j,l] \equiv Z_{[j],i}
		  Z_{[l],i} $, observe that
		  \begin{align*}
		  E_{\mathbf{P}_{n}}[\mathbf{A}_{i,n}[j,l]] = E_{\mathbf{P}_{n}}[Z_{[j],i}
		  Z_{[l],i}] = \Sigma_{[j,l],n}.
		  \end{align*}
		 
		  Let $\mathbf{A}_{i,n}[j,l] = \mathbf{A}^{L}_{i,n}[j,l] + \mathbf{A}^{U}_{i,n}[j,l] \equiv \mathbf{A}_{i,n}[j,l] 1\{ |\mathbf{A}_{i,n}[j,l]| \leq e_{n} \} + \mathbf{A}_{i,n}[j,l] 1\{ |\mathbf{A}_{i,n}[j,l]| \geq e_{n}   \}$ where $(e_{n})_{n}$ with $e_{n} > 0$ is defined below. Clearly, $\mathbf{A}^{L}_{i,n}[j,l] \leq e_{n}$. So,  by Hoeffding inequality (see \cite{Boucheron-book13} p. 34)
		  \begin{align*}
		  & \mathbf{P}_{n} \left( \max_{j,l} |n^{-1}\sum_{i=1}^{n} \{\mathbf{A}^{L}_{i,n}[j,l] - E_{\mathbf{P}_{n}}[\mathbf{A}^{L}_{i,n}[j,l]] \} | \geq c_{n}    \right) \\
		  \leq & \sum_{j,l} \mathbf{P}_{n} \left(  |n^{-1}\sum_{i=1}^{n} \{\mathbf{A}^{L}_{i,n}[j,l] - E_{\mathbf{P}_{n}}[\mathbf{A}^{L}_{i,n}[j,l]] \} | \geq c_{n}    \right)\\
		  \precsim & \exp\left\{ 2\log(d(n)) -  n \frac{c^{2}_{n}}{e^{2}_{n}}    \right\}.
		  \end{align*} 
		 Therefore, by setting $e_{n} = c_{n} \sqrt{\frac{n  0.25}{\log(d(n))}} $, the previous display implies that\begin{align*}
		  	\mathbf{P}_{n} \left( \max_{j,l}  |n^{-1}\sum_{i=1}^{n} \{\mathbf{A}^{L}_{i,n}[j,l] - E_{\mathbf{P}_{n}}[\mathbf{A}^{L}_{i,n}[j,l]] \} | \geq \varepsilon'     \right) \leq \varepsilon',
		  \end{align*}
		  for sufficiently large $n$.

		  Second, by the Markov inequality and the fact that 
		  \begin{align}
		  E_{\mathbf{P}_{n}} \left[ \left( \{\mathbf{A}^{U}_{i,n}[j,l] - E_{\mathbf{P}_{n}}[\mathbf{A}^{U}_{i,n}[j,l]] \} \right) \left( \{\mathbf{A}^{U}_{k,n}[j,l] - E_{\mathbf{P}_{n}}[\mathbf{A}^{U}_{k,n}[j,l]] \} \right)    \right] = 0
		  \end{align} 
		  for all $i\ne k$, it follows that
		  \begin{align*}
		  & \mathbf{P}_{n} \left( \max_{j,l}  |n^{-1}\sum_{i=1}^{n} \{\mathbf{A}^{U}_{i,n}[j,l] - E_{\mathbf{P}_{n}}[\mathbf{A}^{U}_{i,n}[j,l]] \} | \geq c_{n}    \right) \\
		  \leq & \sum_{j,l} (c_{n})^{-2} E_{\mathbf{P}_{n}} \left[ \left( n^{-1}\sum_{i=1}^{n} \{\mathbf{A}^{U}_{i,n}[j,l] - E_{\mathbf{P}_{n}}[\mathbf{A}^{U}_{i,n}[j,l]] \} \right)^{2}      \right]\\
		  = &  (c_{n})^{-2} n^{-1} \sum_{j,l}  E_{\mathbf{P}_{n}} \left[ \left( \{\mathbf{A}^{U}_{1,n}[j,l] - E_{\mathbf{P}_{n}}[\mathbf{A}^{U}_{1,n}[j,l]] \} \right)^{2}      \right] \\
		  \leq & (c_{n})^{-2} n^{-1} \sum_{j,l}  E_{\mathbf{P}_{n}} \left[ \left( \mathbf{A}^{U}_{1,n}[j,l]\right)^{2}      \right].
		  \end{align*}
		  Therefore by the Markov inequality, for $p  > 0$
		  \begin{align*}
		  &  \mathbf{P}_{n} \left( \max_{j,l} |n^{-1}\sum_{i=1}^{n} \{\mathbf{A}^{U}_{i,n}[j,l] - E_{\mathbf{P}_{n}}[\mathbf{A}^{U}_{i,n}[j,l]] \} | \geq c_{n}     \right) \\
		  \leq & \frac{1}{c^{2}_{n} n (e_{n})^{p}} \sum_{j=1}^{d(n)}  \sum_{l=1}^{d(n)} E_{\mathbf{P}_{n}} \left[ \left( Z_{[j],1}
		  Z_{[l],1}  \right)^{2+p}      \right] \\
		  = & \frac{1}{c^{2}_{n} n (e_{n})^{p}}  E_{\mathbf{P}_{n}} \left[ \left( \sum_{j=1}^{d(n)} ( Z_{[j],1})^{2+p} \right)^{2}  \right].
		  \end{align*}
		  Since $e_{n} =  c_{n} \sqrt{\frac{n 0.25}{\log(d(n))}} $ and $c_{n} \equiv \frac{ (\varepsilon')^{2}}{ d(n) h^{-2} tr\{ \Sigma_{n} \}  } $, it follows that 
		  \begin{align*}
		  & \mathbf{P}_{n} \left( \max_{j,l} |n^{-1}\sum_{i=1}^{n} \{\mathbf{A}^{U}_{i,n}[j,l] - E_{\mathbf{P}_{n}}[\mathbf{A}^{U}_{i,n}[j,l]] \} | \geq c_{n}     \right) \\
		  \precsim & \frac{\log(d(n))^{p/2}}{c^{2+p}_{n} n^{1+p/2} }  E_{\mathbf{P}_{n}} \left[ \left( \sum_{j=1}^{d(n)} ( Z_{[j],1})^{2+p} \right)^{2}  \right] \\
		  \precsim &\frac{(\log(d(n)))^{p/2} d(n)^{2+p} (tr\{ \Sigma_{n} \})^{2+p}   }{h^{4+2p} n^{1+p/2} }  E_{\mathbf{P}_{n}} \left[ \left( \sum_{j=1}^{d(n)} ( Z_{[j],1})^{2+p} \right)^{2}  \right].
		  \end{align*}
		  
		  Since we can set $h \asymp \sqrt{tr\{ \Sigma^{2}_{n}   \}}$, the RHS becomes\\ $\frac{(\log(d(n)))^{p/2} d(n)^{2+p} }{n^{1+p/2} } \left( \frac{ tr\{ \Sigma_{n} \} }{ tr\{ \Sigma_{n}^{2} \} } \right)^{2+p} E_{\mathbf{P}_{n}} \left[ \left( \sum_{j=1}^{d(n)} ( Z_{[j],1})^{2+p} \right)^{2}  \right]$. By choosing $p = \kappa$, by Assumptions \ref{ass:data-Z}(i) and \ref{ass:data-Z}(iii), the term vanishes as $n \rightarrow \infty$.

		  Therefore, Equation \ref{eqn:gauss-1} is established and with that the proof of Theorem \ref{thm:boot}.

		  \section{Discussion} 
		  \label{sec:discussion}
		  
		  \textbf{Applicability of our Results.} The example developed in  Section \ref{sec:MST} illustrates a general feature present in several test statistics, namely that they  behave asymptotically as quadratic forms of (properly scaled) sample averages. These are the main motivational examples to which we can apply our result in Theorem \ref{thm:boot}.

		  This remark is best illustrated in the Wald statistic case.  To formalize this, consider i.i.d. data $(X_{1},...,X_{n})$ drawn from $\mathbf{P} $ and parameter a $k \geq d=d(n)$ dimensional $\theta_{\mathbf{P}}$ and a \textquotedblleft smooth" function $\theta \mapsto c(\theta) \in \mathbb{R}^{d}$ which represent the hypothesis we want to test; i.e., the  null hypothesis is $c(\theta_{\mathbf{P}}) = 0$. \footnote{The notation $\theta_{\mathbf{P}}$ stresses that the parameter is a (known) function of the probability distribution. Thus, an estimator can obtained by \textquotedblleft plugging in" the empirical distribution $P_{n}$.   } The fact that  $d$ grows with the sample size is of potential interest because in certain situations one could have that the dimension of the parameter, although fixed,  is not \textquotedblleft small" relative to $n$. Also, in some other situations, one could have a more explicit model of increasing dimensionality like in the cases discussed in Section \ref{sec:MST} or in series or sieves estimators; see, for example, \cite{ChenPouzo14}.

		  Suppose there exists an estimator $\theta_{P_{n}}$ ($P_{n}$ is the empirical distribution), then the Wald statistic is given by
		  \begin{align*}
		  \mathbb{W}_{n}(P_{n},\mathbf{P})  = n \left( c(\theta_{P_{n}}) - c(\theta_{\mathbf{P}}) \right)^{T}  V_{n} \left( c(\theta_{P_{n}}) - c(\theta_{\mathbf{P}}) \right)
		  \end{align*}
		  where $V_{n} \in \mathbb{R}^{d \times d}$ is some (possibly random) matrix to be determined later.
		  
		     Suppose $\theta_{P_{n}}$ admits an asymptotic linear representation (ALR) of the form \footnote{See \cite{VdV00} and references therein for a discussion regarding ALR and sufficient conditions for it. Here we follow \cite{VdV-Murphy00}. }
		  \begin{align}\label{eqn:W-LAR}
		  \left \Vert \sqrt{n}  (c(\theta_{P_{n}}) - c(\theta_{\mathbf{P}}) ) - \sqrt{n} E_{P_{n}} [ \psi(X,\theta_{\mathbf{P}}) ] \right \Vert _{2} = o_{\mathbf{P}}(1 + \sqrt{n}  || c(\theta_{P_{n}}) - c(\theta_{\mathbf{P}}) ||_{2} )
		  \end{align}
		  with $E_{\mathbf{P}}[\psi(X,\theta_{0})] = 0$ and finite second moment. \footnote{Note that $E_{P_{n}}[ \psi(X,\theta_{\mathbf{P}}) ] = n^{-1} \sum_{i=1}^{n} \psi(X_{i},\theta_{\mathbf{P}}) $.}.  
		  
		  The bootstrap analog of the Wald statistic are of the \textquotedblleft plug-in" type, i.e., \footnote{In principle, one could also \textquotedblleft replace" $V_{n}$ --- which typically is a function of $P_{n}$, $V_{P_{n}}$ --- by $V_{P^{\ast}_{n}}$. Our results could be extended to this case too.} 
		  \begin{align*}
		  \mathbb{W}_{n}(P_{n}^{\ast}, P_{n} ) = n \left( c(\theta_{P_{n}^{\ast}}) - c(\theta_{P_{n}}) \right)^{T}  V_{n} \left( c(\theta_{P_{n}^{\ast}}) - c(\theta_{P_{n}}) \right)
		  \end{align*}
		  where $P^{\ast}_{n}$ is given by $n^{-1} \sum_{i=1}^{n} \omega_{i,n} \delta_{X_{i}}$  (the dependence of $P_{n}^{\ast}$ on $X^{n}$ is omitted to ease the notational burden). The bootstrap ALR (B-ALR) is given by
		  \begin{align}\notag
		  & \left \Vert \sqrt{n} (c(\theta_{P^{\ast}_{n}}) - c(\theta_{P_{n}})) - n^{-1/2} \sum_{i=1}^{n} \omega_{i,n} \psi(X_{i},\theta_{\mathbf{P}}) \right \Vert_{2} \\ \label{eqn:W-BLAR}
		  & = o_{\mathbf{P}^{\ast}_{n}}(1 + \sqrt{n} || c(\theta_{P^{\ast}_{n}}) - c(\theta_{P_{n}}) ||_{2} ),~wpa1-\mathbf{P}.
		  \end{align}
		  
		  Given the asymptotic linear representations, we can show that the Wald and Bootstrapped Wald statistics can be represented asymptotically as quadratic forms, and thus fall in the framework studied in this paper. The following proposition formalizes such representation, and thereby allow us to apply our Theorem \ref{thm:boot} with $Z_{i} \equiv \psi(X_{i},\theta_{\mathbf{P}}) V^{1/2}$ to approximate the limiting distribution of $\mathbb{W}_{n}(P_{n},\mathbf{P})$.

		  \begin{proposition}\label{pro:Wald}
		  	  Let $V$ be a matrix such that there exists a $c \geq 1$ such that $c^{-1} \leq \lambda_{l}(V) \leq c$ for all $l=1,...,d$ and $||V_{n} - V ||_{2} = o_{\mathbf{P}}(1)$ . Then,  under the null hypothesis, ALR and B-ALR yield
		  	\begin{align*}
		  	\mathbb{W}_{n}(P_{n},\mathbf{P}) (1 + o_{\mathbf{P}}( 1))  = &  \left( n^{-1/2} \sum_{i=1}^{n} \psi(X_{i},\theta_{\mathbf{P}}) \right)^{T}  V \left( n^{-1/2} \sum_{i=1}^{n} \psi(X_{i},\theta_{\mathbf{P}}) \right) \\
		  	& + o_{\mathbf{P}}( \sqrt{d(n)}  ),
		  	\end{align*}
		  	and 
		  	\begin{align*}
		  	\mathbb{W}_{n}(P_{n}^{\ast}, P_{n}) (1 + o_{\mathbf{P}_{n}^{\ast}}( 1) ) = &  \left( n^{-1/2} \sum_{i=1}^{n} \omega_{i,n} \psi(X_{i},\theta_{\mathbf{P}}) \right)^{T}  V \left( n^{-1/2} \sum_{i=1}^{n} \omega_{i,n} \psi(X_{i},\theta_{\mathbf{P}}) \right) \\
		  	& + o_{\mathbf{P}_{n}^{\ast}}(  \sqrt{d(n)}  )~wpa1-\mathbf{P}.
		  	\end{align*}
		  \end{proposition}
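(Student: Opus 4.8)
\textbf{Proof plan for Proposition \ref{pro:Wald}.} The idea is to pass from the asymptotic linear representations to the quadratic forms via a self-improving bound on the remainders, a Cauchy--Schwarz expansion of the quadratic forms, and the observation that the leading term is of exact order $d(n)$; the bootstrap statement is the same argument carried out conditionally on $X^{n}$. Write $a_{n}\equiv\sqrt{n}\,(c(\theta_{P_{n}})-c(\theta_{\mathbf{P}}))$, $b_{n}\equiv n^{-1/2}\sum_{i=1}^{n}\psi(X_{i},\theta_{\mathbf{P}})$, and $r_{n}\equiv a_{n}-b_{n}$, so that (\ref{eqn:W-LAR}) reads $\|r_{n}\|_{2}=o_{\mathbf{P}}(1+\|a_{n}\|_{2})$. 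First I would turn this into an \emph{absolute} rate: from $a_{n}=b_{n}+r_{n}$ and the triangle inequality, $(1-o_{\mathbf{P}}(1))\|a_{n}\|_{2}\le\|b_{n}\|_{2}+o_{\mathbf{P}}(1)$, hence $\|a_{n}\|_{2}=O_{\mathbf{P}}(\|b_{n}\|_{2}+1)$. Setting $\Sigma_{\psi}\equiv E_{\mathbf{P}}[\psi(X,\theta_{\mathbf{P}})\psi(X,\theta_{\mathbf{P}})^{T}]$ (which, under the maintained assumptions making Theorem \ref{thm:boot} applicable to $Z_{i}=\psi(X_{i},\theta_{\mathbf{P}})V^{1/2}$, has eigenvalues bounded and bounded away from zero), $E_{\mathbf{P}}[\|b_{n}\|_{2}^{2}]=tr\{\Sigma_{\psi}\}\asymp d(n)$, so Markov gives $\|b_{n}\|_{2}=O_{\mathbf{P}}(\sqrt{d(n)})$, and therefore $\|a_{n}\|_{2}=O_{\mathbf{P}}(\sqrt{d(n)})$ and $\|r_{n}\|_{2}=o_{\mathbf{P}}(\sqrt{d(n)})$.

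Next I would expand $\mathbb{W}_{n}(P_{n},\mathbf{P})=a_{n}^{T}V_{n}a_{n}$. Replacing $V_{n}$ by $V$ costs at most $\|V_{n}-V\|_{2}\|a_{n}\|_{2}^{2}=o_{\mathbf{P}}(1)\cdot O_{\mathbf{P}}(d(n))=o_{\mathbf{P}}(d(n))$, and substituting $a_{n}=b_{n}+r_{n}$ gives $a_{n}^{T}Va_{n}=b_{n}^{T}Vb_{n}+2b_{n}^{T}Vr_{n}+r_{n}^{T}Vr_{n}$, where by $\|V\|_{2}\le c$ and Cauchy--Schwarz the last two terms are $O_{\mathbf{P}}(\|b_{n}\|_{2}\|r_{n}\|_{2}+\|r_{n}\|_{2}^{2})=o_{\mathbf{P}}(d(n))$. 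Hence $\mathbb{W}_{n}(P_{n},\mathbf{P})=Q_{n}+o_{\mathbf{P}}(d(n))$ with $Q_{n}\equiv b_{n}^{T}Vb_{n}=\left(n^{-1/2}\sum_{i}\psi(X_{i},\theta_{\mathbf{P}})\right)^{T}V\left(n^{-1/2}\sum_{i}\psi(X_{i},\theta_{\mathbf{P}})\right)$.

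To convert this additive $o_{\mathbf{P}}(d(n))$ into the multiplicative form claimed, I would check that $Q_{n}$ is of exact order $d(n)$: $E_{\mathbf{P}}[Q_{n}]=tr\{V\Sigma_{\psi}\}\asymp d(n)$ by the eigenvalue bounds on $V$ and $\Sigma_{\psi}$, while $\mathrm{Var}_{\mathbf{P}}(Q_{n})\precsim tr\{(V\Sigma_{\psi})^{2}\}+n^{-1}E_{\mathbf{P}}[\|\psi(X,\theta_{\mathbf{P}})\|_{2}^{4}]\precsim d(n)$ under the moment conditions; thus $Q_{n}/d(n)$ stays bounded away from $0$ and $\infty$ with probability approaching one, whence $(\mathbb{W}_{n}(P_{n},\mathbf{P})-Q_{n})/Q_{n}=o_{\mathbf{P}}(1)$, i.e.\ $\mathbb{W}_{n}(P_{n},\mathbf{P})=Q_{n}(1+o_{\mathbf{P}}(1))$; rearranging (the reciprocal of $1+o_{\mathbf{P}}(1)$ is again $1+o_{\mathbf{P}}(1)$) gives $\mathbb{W}_{n}(P_{n},\mathbf{P})(1+o_{\mathbf{P}}(1))=Q_{n}=Q_{n}+o_{\mathbf{P}}(\sqrt{d(n)})$, which is the first display.

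The bootstrap display follows by re-running the three steps conditionally on $X^{n}$, with $b_{n}$ replaced by $b_{n}^{\ast}\equiv n^{-1/2}\sum_{i}\omega_{i,n}\psi(X_{i},\theta_{\mathbf{P}})$ and (\ref{eqn:W-LAR}) by (\ref{eqn:W-BLAR}); the main obstacle is the bootstrap analog of the two order statements used above. For the rate, Assumption \ref{ass:boot-w}(i) gives $E_{\mathbf{P}^{\ast}_{n}}[\|b_{n}^{\ast}\|_{2}^{2}\mid X^{n}]=n^{-1}\sum_{i}\|\psi(X_{i},\theta_{\mathbf{P}})\|_{2}^{2}$, which converges in $\mathbf{P}$-probability to $tr\{\Sigma_{\psi}\}\asymp d(n)$ by a law of large numbers (the argument behind (\ref{eqn:LLN-cov})), so a conditional Markov inequality yields $\|b_{n}^{\ast}\|_{2}=O_{\mathbf{P}^{\ast}_{n}(\cdot|X^{n})}(\sqrt{d(n)})$ wpa1-$\mathbf{P}$. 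For the lower bound, $E_{\mathbf{P}^{\ast}_{n}}[Q_{n}^{\ast}\mid X^{n}]=n^{-1}\sum_{i}\psi(X_{i},\theta_{\mathbf{P}})^{T}V\psi(X_{i},\theta_{\mathbf{P}})\to tr\{V\Sigma_{\psi}\}\asymp d(n)$ and, using Assumption \ref{ass:boot-w}(ii) and the moment bounds on $\psi$, the conditional variance of $Q_{n}^{\ast}\equiv(b_{n}^{\ast})^{T}Vb_{n}^{\ast}$ is $o(d(n)^{2})$ wpa1-$\mathbf{P}$, so $Q_{n}^{\ast}/d(n)$ is bounded away from $0$ and $\infty$ conditionally, wpa1-$\mathbf{P}$. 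Conditioning on the $\mathbf{P}$-probability-one events where these hold and re-running the deterministic expansion gives $\mathbb{W}_{n}(P_{n}^{\ast},P_{n})(1+o_{\mathbf{P}^{\ast}_{n}}(1))=Q_{n}^{\ast}+o_{\mathbf{P}^{\ast}_{n}}(\sqrt{d(n)})$ wpa1-$\mathbf{P}$, the second display.
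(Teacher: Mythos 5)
Your proposal is correct in its main thrust but follows a genuinely different route from the paper's. The paper never converts the ALR remainder into an absolute rate before expanding: it keeps the remainder in its self-normalized form $o_{\mathbf{P}}(1+\sqrt{\mathbb{W}_{n}(P_{n},\mathbf{P})})$, applies the elementary implication that $\Vert x-y\Vert = o(1+\Vert y\Vert)$ forces $\Vert y\Vert(1+o(1))\leq \Vert x\Vert + o(1)$ and $\Vert x\Vert \leq \Vert y\Vert(1+o(1))+o(1)$ directly in the $\Vert\cdot\Vert_{V}$ norm, and thereby lands immediately on $\mathbb{W}_{n}(P_{n},\mathbf{P})(1+o_{\mathbf{P}}(1)) = \Vert\sqrt{n}E_{P_{n}}[\psi(X,\theta_{\mathbf{P}})]\Vert_{V}^{2} + o_{\mathbf{P}}(1+\Vert\sqrt{n}E_{P_{n}}[\psi(X,\theta_{\mathbf{P}})]\Vert_{V})$; after that only the \emph{upper} bound $\Vert\sqrt{n}E_{P_{n}}[\psi(X,\theta_{\mathbf{P}})]\Vert_{2}=O_{\mathbf{P}}(\sqrt{d(n)})$ is needed, which the paper gets from componentwise second moments alone. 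You instead square out $a_{n}^{T}Va_{n}$ with $a_{n}=b_{n}+r_{n}$, obtain an additive $o_{\mathbf{P}}(d(n))$ error, and then must divide by $Q_{n}$ to recover the multiplicative form. That last step is where your route costs more than the statement supplies: you need $Q_{n}\gtrsim d(n)$ in probability, which requires the eigenvalues of $E_{\mathbf{P}}[\psi\psi^{T}]$ to be bounded away from zero and a fourth-moment bound to control $\mathrm{Var}_{\mathbf{P}}(Q_{n})$ --- neither is part of the proposition's hypothesis, which only assumes mean zero and finite second moments (you flag this by appealing to the assumptions needed for Theorem \ref{thm:boot}, which is defensible in context). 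Moreover, when $d(n)$ stays bounded the Chebyshev step does not bound $Q_{n}/d(n)$ away from zero; you would need an anti-concentration argument in the spirit of Lemma \ref{lem:anticoncentration-V} for that regime. The paper's route buys exactly the avoidance of any lower bound on $Q_{n}$, at the price of the slightly less transparent reverse-triangle-inequality manipulation; your route is more mechanical but strictly more demanding on moments. The conditional re-run for the bootstrap display is handled the same way in both arguments.
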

		  
		  \begin{proof}
		  	See Appendix \ref{app:discussion}.
		  \end{proof}

		    A few remarks are in order. First, and more importantly, we note that, our results can be applied to other test statistic provided that are \emph{asymptotically} equivalent (up to $o(\sqrt{d(n)})$) to $\mathbb{W}(P_{n},\mathbf{P})$ or to a quadratic form as in the proposition. Typically this is the case for the Likelihood ratio and Lagrange Multiplier (or Score) test statistics; see \cite{Newey19942111} Section 9.  
		  
		  Second, for the Chi-square-based approximation to be valid, $V$ must coincide with $(E_{\mathbf{P}}[\psi (X ,\theta_{0} ) \psi (X ,\theta_{0} )^{T} ])^{-1}$. 
		  The bootstrap-based approximation, however, does not require this assumption. This situation may arise, for instance, in Likelihood ratio tests under model misspecification.

		  
		  \bigskip 
		  
		  \textbf{Choice of Weights.} We now provide some heuristic discussion regarding the weights. 
		  
		   The bootstrap procedure studied in this paper uses independent weights. Such restriction has also been used in several papers; e.g. \cite{CCK-AOS13} and \cite{Ma2005190}. This choice is largely due to the fact that the independent behavior of weights makes many of the proofs easier. It would be of interest still to extend our results to non-iid weights such as Multinomial weights --- which yield non-parametric and m-out-n bootstrap procedures. While such an extension is beyond the scope of the paper, we point out that the key step in order to do this is to extend Theorem \ref{thm:LFQ-bound} and Lemma \ref{lem:LFQ-bound} (in the Appendix) to allow for non-independent data;\footnote{At least for one sequence, either the $A$'s or $B$'s in the Theorem. Independence is, due to the technique of proof, particularly important for establishing Lemma \ref{lem:LFQ-bound}.}
		 

		  Even within the class of independent weights, one could wonder what properties are desirable for the weights to have. Clearly, as indicated by our Assumption \ref{ass:boot-w}, restrictions on the tail behavior of the weights are important for our results. We now present a discussion, which expands on the quantitative explorations in Section \ref{sec:simul}, about what other properties might be desirable to have. 
		  
		  Heuristically, the Lindeberg interpolation result --- Theorem \ref{thm:LFQ-bound} --- relies on \textquotedblleft matching" the first and second moments. By choosing the weights to match higher moment one could expect to improve the approximation rates.\footnote{These observations are related to the four moment Theorem of Tao and Vu in the context of random matrices; see \cite{TaoVu11}.} 	  More precisely, the bounds for $\mathbf{S}_{n}$ (and $\mathbf{R}_{n}$) obtained in Lemma \ref{lem:LFQ-bound} in the Appendix only use restrictions impose the restrictions in the original data and the bootstrap weights present in Assumptions \ref{ass:data-Z}(i)(ii) and \ref{ass:boot-w}. However, it is easy to see that if one would have additional information on the higher moments, one could obtain sharper bounds for $\mathbf{S}_{n}$. For instance, to show Theorem \ref{thm:weak-norm-boot}, we apply Theorem \ref{thm:LFQ-bound} with $A_{i} = n^{-1/2} \omega_{i,n} Z_{i} $ and $B_{i} = n^{-1/2} u_{i} Z_{i}$ with $u_{i} \sim N(0,1)$. If we would have that $(\omega_{i,n})_{i=1}^{n}$ were such that $E[|\omega_{i,n}|^{4}]=E[(Z)^{4}]$ with $z \sim N(0,1)$, then $\mathbf{S}_{1,n} = 0$. A similar observation applies to $\mathbf{S}_{2,n}$ but in this case the relevant moments are $E[\omega_{i,n}^{3}]$ and $E [Z^{3}]$.\footnote{The rate of convergence of the term $\mathbf{R}_{n}$ is regulated by $q$, which is link to the bound on higher moments of the data and weights (see Assumptions \ref{ass:boot-w} and \ref{ass:data-Z}).}

		  Finally, another extension that is linked to the previous discussion, is that of refinements (or lack thereof) of certain choices of bootstrap weights. We leave this for future research.

		 	\bibliographystyle{natbib}
		 	\bibliography{mybib_boot_incr.bib}
	
	\newpage
	
	\appendix
	
	{\Large{{\bf{Appendix}}}}

	\section{Proof of Theorems \ref{thm:LFQ-bound}, \ref{thm:weak-norm-boot} and  \ref{thm:weak-norm-ori}}
	\label{app:lindeberg}
	
The next lemma provides a bound for $\mathbf{S}_{n}$ and $\mathbf{R}_{n}$ in Theorem \ref{thm:LFQ-bound}. Henceforth, let $\mathbb{S}_{i:n} \equiv \sum_{j=1}^{i-1} A_{j} + 0 + \sum_{j=i+1}^{n} B_{j} \equiv \sum_{j=1}^{n} S_{j}$.
	
	\begin{lemma} 
		\label{lem:LFQ-bound}
		Suppose the same conditions of Theorem \ref{thm:LFQ-bound}. Then, \begin{align*}
			\mathbf{S}_{1,n} \leq &  L_{2}(f) \sum_{i=1}^{n} E[ || B_{i}||^{4}_{2}  + || A_{i}||^{4}_{2}  ] \\ 
			\mathbf{S}_{2,n} \leq & L_{2}(f) \sqrt{ \sum_{j=1}^{n} tr\{ C_{j} \} } \sum_{i=1}^{n} \left( E \left[  || B_{i}||^{3}_{2}  \right] +  E \left[  || A_{i}||^{3}_{2}  \right] \right).
		\end{align*}
		And, for any $q>0$ \begin{align*}
			 \mathbf{R}_{n} \precsim    \sum_{i=1}^{n} \left( E \left[ \left(  \mathbb{S}_{i:n}^{T} B_{i}   \right)^{2+q} + \left(  \mathbb{S}_{i:n}^{T} A_{i}   \right)^{2+q} \right] +  E \left[  ||B_{i}||_{2} ^{4+2q}  \right] +  E \left[  ||A_{i}||_{2}  ^{4+2q}  \right] \right).
		\end{align*}  
		And 
		\begin{align*}
		& \sum_{i=1}^{n} E \left[ \left(  \mathbb{S}_{i:n}^{T} B_{i}   \right)^{2+q}  \right] \\
		\precsim & \sum_{i=1}^{n}  E[  ||B_{i}||^{2+q}_{2} ] \max \left\{ \left(   \sum_{j=1}^{n}   E \left[  ||S_{j}||^{2}_{2} \right]  \right)^{1+0.5q}  ,     \sum_{j=1}^{n} E \left[  ||S_{j}||_{2}  ^{2+q}  \right]   \right\} .
		\end{align*}
		An analogous expression holds for $\sum_{i=1}^{n} E \left[ \left(  \mathbb{S}_{i:n}^{T} A_{i}   \right)^{2+q}  \right]$.
	\end{lemma}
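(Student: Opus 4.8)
The first three bounds are essentially algebraic rearrangements, so the plan is to dispatch them quickly and concentrate on the last display. For $\mathbf{S}_{1,n}$, the point is that $\mathbb{S}_{i:n}$ depends only on $A_{1},\dots,A_{i-1},B_{i+1},\dots,B_{n}$ and is therefore independent of $(A_{i},B_{i})$; the $i$-th summand factors as $\bigl|E[f''(||\mathbb{S}_{i:n}||^{2}_{2})]\bigr|\cdot\bigl|E[||B_{i}||^{4}_{2}]-E[||A_{i}||^{4}_{2}]\bigr|$, and using $|f''|\leq L_{2}(f)$ together with $|a-b|\leq|a|+|b|$ and summing over $i$ gives the first claim. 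For $\mathbf{S}_{2,n}$, I would read the $i$-th summand as the Euclidean inner product of the vector $E[f''(||\mathbb{S}_{i:n}||^{2}_{2})\,\mathbb{S}_{i:n}]$ with $E[B_{i}||B_{i}||^{2}_{2}]-E[A_{i}||A_{i}||^{2}_{2}]$ and apply Cauchy--Schwarz. The first factor is at most $E[|f''(\cdot)|\,||\mathbb{S}_{i:n}||_{2}]\leq L_{2}(f)(E||\mathbb{S}_{i:n}||^{2}_{2})^{1/2}$ by Jensen, and since $\mathbb{S}_{i:n}$ is a sum of independent mean-zero vectors with $E[A_{j}A_{j}^{T}]=E[B_{j}B_{j}^{T}]=C_{j}$ one gets $E||\mathbb{S}_{i:n}||^{2}_{2}=\sum_{j\neq i}tr\{C_{j}\}\leq\sum_{j=1}^{n}tr\{C_{j}\}$; the second factor is at most $E||B_{i}||^{3}_{2}+E||A_{i}||^{3}_{2}$ by the triangle inequality and Jensen. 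Summing over $i$ yields $\mathbf{S}_{2,n}$ up to a universal constant, which suffices for all later uses. For $\mathbf{R}_{n}$, applying the convexity bound $|a+b|^{2+q}\leq 2^{1+q}(|a|^{2+q}+|b|^{2+q})$ to each summand (separating $\mathbb{S}_{i:n}^{T}B_{i}$ from $||B_{i}||^{2}_{2}$, and likewise for the $A$-term) and absorbing $2^{1+q}$ into $\precsim$ gives the third display.

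The substantive step is the bound on $\sum_{i=1}^{n}E[(\mathbb{S}_{i:n}^{T}B_{i})^{2+q}]$. Here I would condition on $B_{i}$, which is legitimate because $B_{i}$ is independent of $\mathbb{S}_{i:n}$; then $\mathbb{S}_{i:n}^{T}B_{i}=\sum_{j\neq i}S_{j}^{T}B_{i}$ is conditionally a sum of independent random variables with conditional mean $E[S_{j}]^{T}B_{i}=0$. A Rosenthal-type moment inequality then gives $E[(\mathbb{S}_{i:n}^{T}B_{i})^{2+q}\mid B_{i}]\precsim\max\{(\sum_{j\neq i}E[(S_{j}^{T}B_{i})^{2}\mid B_{i}])^{1+q/2},\ \sum_{j\neq i}E[|S_{j}^{T}B_{i}|^{2+q}\mid B_{i}]\}$. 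The conditional second moment is $E[(S_{j}^{T}B_{i})^{2}\mid B_{i}]=B_{i}^{T}C_{j}B_{i}\leq||C_{j}||_{op}\,||B_{i}||^{2}_{2}\leq tr\{C_{j}\}\,||B_{i}||^{2}_{2}=E[||S_{j}||^{2}_{2}]\,||B_{i}||^{2}_{2}$, using $||C_{j}||_{op}\leq tr\{C_{j}\}$ for a positive semidefinite matrix; and the conditional $(2+q)$-th moment is $E[|S_{j}^{T}B_{i}|^{2+q}\mid B_{i}]\leq||B_{i}||^{2+q}_{2}\,E[||S_{j}||^{2+q}_{2}]$ by Cauchy--Schwarz and $S_{j}\perp B_{i}$. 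Substituting these, pulling the powers of $||B_{i}||_{2}$ out of the ($B_{i}$-deterministic) sums, and taking expectation over $B_{i}$ bounds $E[(\mathbb{S}_{i:n}^{T}B_{i})^{2+q}]$ by $E[||B_{i}||^{2+q}_{2}]$ times $2\max\{(\sum_{j}E[||S_{j}||^{2}_{2}])^{1+q/2},\ \sum_{j}E[||S_{j}||^{2+q}_{2}]\}$; summing over $i$ gives the stated bound, and the identical argument with $A_{i}$ in place of $B_{i}$ gives the analogue.

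The main obstacle is exactly this last step: one has to pick the correct higher-moment inequality for sums of independent variables and, more importantly, make the deliberately crude replacement $B_{i}^{T}C_{j}B_{i}\leq tr\{C_{j}\}\,||B_{i}||^{2}_{2}$ so that the final bound is expressed through $\sum_{j}E[||S_{j}||^{2}_{2}]$ and $\sum_{j}E[||S_{j}||^{2+q}_{2}]$ --- the quantities appearing in the statement --- rather than through a spectral norm of $\sum_{j}C_{j}$; everything else is routine bookkeeping with Jensen, Cauchy--Schwarz and convexity.
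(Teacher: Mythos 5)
Your proposal is correct and follows essentially the same route as the paper: the same independence and Cauchy--Schwarz arguments for $\mathbf{S}_{1,n}$ and $\mathbf{S}_{2,n}$, the same convexity split for $\mathbf{R}_{n}$, and for the key term the same conditioning on $B_{i}$ followed by a Rosenthal-type inequality for sums of independent mean-zero variables (the paper cites Johnson--Schechtman--Zinn), with the bound $B_{i}^{T}C_{j}B_{i}\leq tr\{C_{j}\}\,||B_{i}||^{2}_{2}$ that the paper leaves as ``straightforward algebra.''
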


	\begin{proof}[Proof of Lemma \ref{lem:LFQ-bound}]
		$\mathbf{S}_{1,n}$ is trivially bounded by $ L_{2}(f) \sum_{i=1}^{n} E[ || B_{i}||^{4}_{2}  + || A_{i}||^{4}_{2}  ]$. Regarding $\mathbf{S}_{2,n}$, observe that 
		
		\begin{align*}
		 \sum_{i=1}^{n} |E\left[ f''  \left( || \mathbb{S}_{i:n} ||^{2}_{2}  \right) \mathbb{S}_{i:n}^{T} \right] \left( E [  B_{i} || B_{i}||^{2}_{2} ] \right)| \leq & L_{2}(f)  \sum_{i=1}^{n} E\left[ ||\mathbb{S}_{i:n}||_{2} || B_{i}||^{3}_{2}  \right]\\
		\leq & L_{2}(f) \sum_{i=1}^{n} \sqrt{ E\left[ ||\mathbb{S}_{i:n}||^{2}_{2}  \right]} E \left[  || B_{i}||^{3}_{2}  \right]
		\end{align*}
		by independence of $\mathbb{S}_{i:n}$ and $B_{i}$ and Cauchy-Schwarz. Also, $E[ \mathbb{S}_{i:n}\mathbb{S}_{i:n}^{T} ] = \sum_{j=1}^{n} E[S_{j}S_{j}^{T}]$, so $E\left[ ||\mathbb{S}_{i:n}||^{2}_{2}  \right] = tr \{  E[ \mathbb{S}_{i:n}\mathbb{S}_{i:n}^{T} ]  \} = \sum_{j=1}^{n} tr\{ C_{j} \}$. A similar results holds when $B_{i}$ is replaced by $A_{i}$. Therefore
		\begin{align*}
		\mathbf{S}_{2,n} \leq & L_{2}(f) \sqrt{ \sum_{j=1}^{n} tr\{ C_{j} \} } \sum_{i=1}^{n} \left( E \left[  || B_{i}||^{3}_{2}  \right] +  E \left[  || A_{i}||^{3}_{2}  \right] \right).
		\end{align*}

		Regarding $\mathbf{R}_{n}$. Note that 		
		\begin{align*}
		  \sum_{i=1}^{n}  E \left[ \left(  \mathbb{S}_{i:n}^{T} B_{i}  + ||B_{i}||^{2}_{2}    \right)^{2+q}  \right] 	\precsim   \left( \sum_{i=1}^{n}  E \left[ \left(  \mathbb{S}_{i:n}^{T} B_{i}   \right)^{2+q}  \right] + \sum_{i=1}^{n} E \left[ \left( ||B_{i}||_{2}  \right)^{4+2q}  \right] \right).
		\end{align*}

		Observe that $E \left[ \left(  \mathbb{S}_{i:n}^{T} B_{i}   \right)^{2+q}  \right] = E \left[ E \left[ \left(  \sum_{j=1}^{n} S_{j}^{T}b_{i}   \right)^{2+q}  | B_{i} = b_{i} \right] \right]$. Since $(S_{j})_{j}$ does not contain $B_{i}$, conditioning on $B_{i}=b_{i}$, $(S_{j}^{T}b_{i})_{j}$ is an independent sequence. 
		
		Therefore, by \cite{Johnson-Schechtman-Zinn-85}, for any $q>0$, 
		\begin{align*}
		& E \left[ \left(  \mathbb{S}_{i:n}^{T} b_{i}   \right)^{2+q}  \right] \\
		& \precsim \left( \max \left\{ \sqrt{ E \left[ \left(  \sum_{j=1}^{n} S_{j}^{T}b_{i}   \right)^{2}  \right]} , \left( \sum_{j=1}^{n} E \left[ \left(  S_{j}^{T}b_{i}   \right)^{2+q}  \right] \right)^{1/(2+q)}  \right\} \right)^{2+q}	
		\end{align*}
		(where the expectation is only with respect to $(S_{j})_{j=1}^{n}$, not $b_{i}$). By independence, and the fact that $E[S^{T}_{j} b_{i} ] = 0$,
		\begin{align*}
		E \left[ \left(  \sum_{j=1}^{n} S_{j}^{T}b_{i}   \right)^{2}  \right] = & E \left[ \sum_{j=1}^{n}  \left(  S_{j}^{T}b_{i}   \right)^{2}  \right] 
		= & tr \left\{  E \left[  \left(  \sum_{j=1}^{n}  S_{j} S_{j}^{T}    \right) \right]   b_{i} b_{i}^{T} \right\} .
		\end{align*}
		
		Also, note that\begin{align*}
		\sum_{j=1}^{n} E \left[ \left(  S_{j}^{T}b_{i}   \right)^{2+q}  \right] \leq \sum_{j=1}^{n} E \left[ \left(  ||S_{j}||_{2} ||b_{i}||_{2}    \right)^{2+q}  \right] =  \left(  ||b_{i}||_{2}   \right)^{2+q}  \sum_{j=1}^{n} E \left[ \left(  ||S_{j}||_{2}   \right)^{2+q}  \right]   .
		\end{align*}
		
		Therefore, using these bounds and taking expectation with respect to $B_{i}$ and after straightforward algebra, 
		\begin{align*}
		&  \sum_{i=1}^{n} E \left[ \left(  \mathbb{S}_{i:n}^{T} B_{i}   \right)^{2+q}  \right]   \\
		\precsim & \sum_{i=1}^{n}  E[  ||B_{i}||^{2+q}_{2} ] \max \left\{ \left(   \sum_{j=1}^{n}   E \left[  ||S_{j}||^{2}_{2} \right]  \right)^{1+0.5q}  ,     \sum_{j=1}^{n} E \left[  ||S_{j}||_{2}  ^{2+q}  \right]   \right\} .
		\end{align*}
		Analogous steps can be taken to show the same result replacing $B_{i} $ by $A_{i}$; they will be omitted.
	\end{proof}

	\begin{proof}[Proof of Theorem \ref{thm:LFQ-bound} ]
		
		 Observe that $(S_{i})_{i=1} ^{n}$ are independent and $E[S_{i}]=0$, also $E[S_{i}S_{i}^{T}] = E[B_{i} B_{i}^{T}] = C_{i}$. Also, note that $\mathbb{S}_{1:n} \equiv \sum_{i=1}^{n} B_{i} - B_{1}$ and $\mathbb{S}_{n:n} \equiv \sum_{i=1}^{n} A_{i} - A_{n}$. Moreover
		\begin{align}
		\mathbb{S}_{i:n} + A_{i} =\left( \sum_{j=1}^{i} A_{j} + \sum_{j=i+1}^{n} B_{j}   \right) = \mathbb{S}_{i+1:n} + B_{i+1}.
		\end{align}
		Therefore, 
		\begin{align*}
		\sum_{i=1}^{n} E\left[f \left( ||\mathbb{S}_{i:n} + B_{i}||^{2}_{2}  \right) - f \left( || \mathbb{S}_{i:n} + A_{i}||^{2}_{2} \right) \right] =   E\left[f \left( ||\sum_{i=1}^{n} B_{i} ||^{2}_{2}  \right) - f \left(  ||\sum_{i=1}^{n} A_{i} ||^{2}_{2} \right) \right].
		\end{align*}
		
		Observe that $||\mathbb{S}_{i:n} + B_{i}||^{2}_{2} = ||\mathbb{S}_{i:n}||^{2}_{2} + ||B_{i}||^{2}_{2}  + 2 \mathbb{S}_{i:n}^{T} B_{i} $. Therefore, by this fact and three times differentiability of $f$, it follows that
		\begin{align*}
		f \left( ||\mathbb{S}_{i:n} + B_{i}||^{2}_{2}  \right) -f \left( ||\mathbb{S}_{i:n} ||^{2}_{2}  \right) 	=	&  f'  \left( ||\mathbb{S}_{i:n} ||^{2}_{2}  \right) \left( ||B_{i}||^{2}_{2}  + 2 \mathbb{S}_{i:n}^{T} B_{i} \right)\\
		& + 0.5 f''  \left( ||\mathbb{S}_{i:n} ||^{2}_{2}  \right) \left( ||B_{i}||^{2}_{2}  + 2 \mathbb{S}_{i:n}^{T} B_{i} \right)^{2} \\
		& + R_{i,1,n}
		\end{align*}
		where $R_{i,1,n}$ is a reminder term which will be defined later. Similarly
		\begin{align*}
		f \left( ||\mathbb{S}_{i:n} + A_{i}||^{2}_{2}  \right) -f \left( ||\mathbb{S}_{i:n} ||^{2}_{2}  \right) 	=	&  f'  \left( ||\mathbb{S}_{i:n} ||^{2}_{2}  \right) \left( ||A_{i}||^{2}_{2}  + 2 \mathbb{S}_{i:n}^{T} A_{i} \right)\\
		& + 0.5 f''  \left( ||\mathbb{S}_{i:n} ||^{2}_{2}  \right) \left( ||A_{i}||^{2}_{2}  + 2 \mathbb{S}_{i:n}^{T} A_{i} \right)^{2} \\
		& + R_{i,2,n}.
		\end{align*}
		
		Hence\begin{align*}
		& E\left[f \left( ||\mathbb{S}_{i:n} + B_{i}||^{2}_{2}  \right) - f \left( || \mathbb{S}_{i:n} + A_{i}||^{2}_{2} \right) \right] \\
		= & E\left[ f'  \left( ||\mathbb{S}_{i:n} ||^{2}_{2}  \right) \left( ||A_{i}||^{2}_{2} - ||B_{i}||^{2}_{2}   + 2 \mathbb{S}_{i:n}^{T} (A_{i}  - B_{i}) \right)  \right] \\
		& + 0.5 E\left[ f''  \left( ||\mathbb{S}_{i:n} ||^{2}_{2}  \right) \left\{ \left( ||B_{i}||^{2}_{2}  + 2 \mathbb{S}_{i:n}^{T} B_{i} \right)^{2}  - \left( ||A_{i}||^{2}_{2}  + 2 \mathbb{S}_{i:n}^{T} A_{i} \right)^{2} \right\}  \right]\\
		& + E \left[ R_{i,1,n} - R_{i,2,n} \right]\\
		\equiv & F_{i,n} + S_{i,n} + E \left[ R_{i,1,n} - R_{i,2,n} \right].
		\end{align*}
		
		Therefore, it suffices to bound the \emph{first order terms} $F_{n} \equiv \sum_{i=1}^{n} F_{i,n}$, \emph{second order terms} $S_{n} \equiv \sum_{i=1}^{n} S_{i,n}$ and \emph{the remainder terms} $E \left[ R_{i,1,n} - R_{i,2,n} \right]$.

		\bigskip

		\textsc{The First order terms, $F_{n}$.} Since $\mathbb{S}_{i:n}$ is independent with $A_{i}$ and $B_{i}$ and $E[A_{i}]=E[B_{i}]=0$ and $E[A_{i}A_{i}^{T}]=E[B_{i}B_{i}^{T}]$ it readily follows that
		\begin{align*}
		\sum_{i=1}^{n} E\left[ f'  \left( || \mathbb{S}_{i:n} ||^{2}_{2}  \right) \mathbb{S}_{i:n}^{T} \left( B_{i}  - A_{i}   \right)  \right] = \sum_{i=1}^{n} E\left[ f'  \left( || \mathbb{S}_{i:n} ||^{2}_{2}  \right) \mathbb{S}_{i:n}^{T} \right] E \left[ \left( B_{i}  - A_{i}   \right)  \right] = 0
		\end{align*}
		and
		\begin{align*}
		\sum_{i=1}^{n} E\left[ f'  \left( || \mathbb{S}_{i:n} ||^{2}_{2}  \right) \left( ||B_{i}||^{2}_{2}  - ||A_{i}||^{2}_{2}   \right)  \right] = \sum_{i=1}^{n} E\left[ f'  \left( || \mathbb{S}_{i:n} ||^{2}_{2}  \right) \right] E \left[ \left( ||B_{i}||^{2}_{2}  - ||A_{i}||^{2}_{2}   \right)  \right] = 0.
		\end{align*}

		\bigskip

		\textsc{The term Second order terms, $S_{n}$.} For this term it suffices to study the following terms:
		\begin{align*}
		S_{1,n} \equiv &\sum_{i=1}^{n} E\left[ f''  \left( || \mathbb{S}_{i:n} ||^{2}_{2}  \right) \left( || B_{i}||^{4}_{2}  - || A_{i}||^{4}_{2}    \right)  \right]\\
		S_{2,n} \equiv &\sum_{i=1}^{n} E\left[ f''  \left( || \mathbb{S}_{i:n} ||^{2}_{2}  \right) 4 \left( (\mathbb{S}_{i:n}^{T}B_{i})^{2}   - (\mathbb{S}_{i:n}^{T}A_{i})^{2}    \right)  \right]\\
		S_{3,n} \equiv & \sum_{i=1}^{n} E\left[ f''  \left( || \mathbb{S}_{i:n} ||^{2}_{2}  \right) 4 \mathbb{S}_{i:n}^{T}\left( B_{i} || B_{i}||^{2}_{2}  -  A_{i} || A_{i}||^{2}_{2}    \right)  \right].
		\end{align*}
		
		By independence of $\mathbb{S}_{i:n}$ with $A_{i}$ and $B_{i}$, it follows that \begin{align*}
			S_{1,n} = \sum_{i=1}^{n} E\left[ f''  \left( || \mathbb{S}_{i:n} ||^{2}_{2}  \right) \right]  E \left[  || B_{i}||^{4}_{2}  - || A_{i}||^{4}_{2}   \right] .
		\end{align*}
		Regarding $S_{2,n}$, because $\mathbb{S}_{i:n}$ is independent to $A_{i}$ and $B_{i}$ and $E[A_{i} A_{i}^{T}] = E[B_{i} B_{i}^{T}]$, it follows that $E\left[ \mathbb{S}_{i:n}^{T}B_{i}B_{i}^{T}\mathbb{S}_{i:n} \right]  = E\left[ \mathbb{S}_{i:n}^{T}A_{i}A_{i}^{T}\mathbb{S}_{i:n} \right]$ and thus $S_{2,n} = 0$.

		Finally, regarding $S_{3,n}$, observe that by independence of $\mathbb{S}_{i:n}$ and $B_{i}$ and $A_{i}$
		\begin{align*}
			|S_{3,n}| \leq 4 \sum_{i=1}^{n} |E\left[ f''  \left( || \mathbb{S}_{i:n} ||^{2}_{2}  \right) \mathbb{S}_{i:n}^{T} \right] \left( E [  B_{i} || B_{i}||^{2}_{2} ] -  E [ A_{i} || A_{i}||^{2}_{2}   ] \right)|.
		\end{align*}

		\bigskip

		\textsc{The remainder terms, $R_{1,n}$ and $R_{2,n}$.} By Taylor's Theorem it follows that: For any $q>0$
			\begin{align*}
			\sum_{i=1}^{n}  E \left[ |R_{i,1,n} |  \right] \precsim & L_{2}(f)^{1-q} L_{3}(f)^{q} \sum_{i=1}^{n}  E \left[ \left(  \mathbb{S}_{i:n}^{T} B_{i}  + ||B_{i}||^{2}_{2}    \right)^{2+q}  \right]. 
			\end{align*}
		
	\end{proof}

	\subsection{Proof of Theorem \ref{thm:weak-norm-boot}}
	\label{app:proof-weak-boot}

	\begin{proof}[Proof of Theorem \ref{thm:weak-norm-boot}]

		We first note that is enough to bound \begin{align*}
		\mathbf{P}_{n} \left( \left\{	\sup_{f \in \mathcal{C}_{h^{-1}}} \left|  E_{\mathbf{P}^{\ast}_{n}} \left[ f \left( || \sqrt{n} \mathbb{Z}^{\ast}_{n}||^{2}_{2}  \right)  |Z^{n} \right] - E_{\boldsymbol{\Phi}^{\ast}_{n}} \left[ f \left( || \sqrt{n} \mathbb{U}_{n} ||^{2}_{2} \right)  | Z^{n} \right]  \right| \geq \varepsilon  \right\} \cap K_{n}  \right)
		\end{align*}
		where $K_{n} \equiv \{  Z^{n}   : n^{-1} \sum_{i=1}^{n} ||Z_{i}||^{2}_{2}  \leq  (0.5 \varepsilon)^{-1} tr \{ \Sigma_{n}  \} \equiv M_{n} \}$.

		The strategy of proof consists of applying the results in  Theorem \ref{thm:LFQ-bound} and Lemma \ref{lem:LFQ-bound}, with $A_{i} = n^{-1/2} \omega_{in} Z_{i}$ and $B_{i} = n^{-1/2} u_{i} Z_{i}$ where $u_{i} \sim N(0,1)$.\footnote{Note that $\mathbb{U}_{n}$ can be cast as $n^{-1} \sum_{i=1}^{n} u_{i} Z_{i}$. } Then use the Markov inequality and show that the expectation (under $	\mathbf{P}_{n}$) of the terms in the RHS of the main expression in Theorem \ref{thm:LFQ-bound}, $\mathbf{S}_{n}$ and $\mathbf{R}_{n}$,  vanishes as $n \rightarrow \infty$.

		\bigskip

		\textsc{The leading terms, $\mathbf{S}_{n}$.} For this case $\sum_{i=1}^{n} E[(||B_{i}||_{2} )^{4}] \precsim  n^{-2} \sum_{i=1}^{n} ||Z_{i}||^{4}_{2} $ and $\sum_{i=1}^{n} E[(||A_{i}||_{2} )^{4}] \precsim  n^{-2} \sum_{i=1}^{n} ||Z_{i}||^{4}_{2} $, under Assumption \ref{ass:boot-w}. Therefore, $\mathbf{S}_{1,n}$ in Theorem \ref{thm:LFQ-bound} is bounded above (up to a constant) by $n^{-1}  \left( n^{-1} \sum_{i=1}^{n} ||Z_{i}||^{4}_{2}  \right)$. 
		
		Therefore, since $L_{2}(f) = h^{-2}$,  $E_{\mathbf{P}_{n} } [\mathbf{S}_{1,n} ] \precsim h^{-2} n^{-2} \sum_{i=1}^{n} E_{\mathbf{P}_{n} }[ ||Z_{i}||^{4}_{2} ] = h^{-2} n^{-1} E_{\mathbf{P}_{n} }[ ||Z_{1}||^{4}_{2} ] $
which is of order $o(h^{-2})$ by Assumption \ref{ass:data-Z}(i).

		Observe that in this case $E[S_{i}S_{i}^{T}] =n^{-1}  Z_{i}Z_{i}^{T}$ and thus 
		\begin{align*}
		\mathbf{S}_{2,n} \precsim & h^{-2} \sqrt{ n^{-1} \sum_{i=1}^{n} ||Z_{i}||^{2}_{2} } n^{-3/2} \sum_{i=1}^{n} E[|\omega_{in}|^{3} + |u_{i,n}|^{3} ]  ||Z_{i}||^{3}_{2} \\
		\precsim &  h^{-2} \sqrt{ n^{-1} \sum_{i=1}^{n} ||Z_{i}||^{2}_{2} } n^{-3/2} \sum_{i=1}^{n} ||Z_{i}||^{3}_{2}.
		\end{align*}
		
		%
		
		For any $Z^{n} \in K_{n}$,  $\mathbf{S}_{2,n} \precsim h^{-2} \sqrt{M_{n}} n^{-3/2} \sum_{i=1}^{n} ||Z_{i}||^{3}_{2}$. Therefore, $E_{\mathbf{P}_{n} } [\mathbf{S}_{2,n} 1\{ K_{n} \} ] \precsim h^{-2} \sqrt{M_{n}}  n^{-3/2} \sum_{i=1}^{n} E_{\mathbf{P}_{n} }[ ||Z_{i}||^{3}_{2} ] = h^{-2} \sqrt{M_{n}} n^{-1/2}  E_{\mathbf{P}_{n} }[ ||Z_{1}||^{3}_{2} ] $, which is of order $o(h^{-2})$ by Assumption \ref{ass:data-Z}(i).

		\bigskip

		\textsc{The remainder terms,  $\mathbf{R}_{n}$.} To bound the remainder term in the expression of Theorem \ref{thm:LFQ-bound} we use Lemma \ref{lem:LFQ-bound} and the fact that $L_{2}(f) = h^{-2}$. Observe that $\left(  tr \left\{  \sum_{j=1}^{n}  E \left[  \left( S_{j}^{T} S_{j}    \right) \right] \right\} \right)^{1+0.5q}  = \left(  tr \left\{  n^{-1} \sum_{j=1}^{n} Z_{j} Z_{j}^{T}  \right\} \right)^{1+0.5q}  =  \left(   n^{-1} \sum_{j=1}^{n} ||Z_{j}||^{2}_{2} \right)^{1+0.5q}  $ . Also, \begin{align*}
			\sum_{i=1}^{n} E \left[ \left(  ||B_{i}||_{2}   \right)^{2+q}  \right] = n^{-(1+0.5q)}\sum_{i=1}^{n} E \left[ |u_{i,n}|^{2+q}  \right]  ||Z_{i}||_{2}^{2+q}  \precsim  n^{-(1+0.5q)}\sum_{i=1}^{n}  ||Z_{i}||_{2}^{2+q} 
		\end{align*}
		because of the fact that $E[|u_{i,n}|^{2+q}] \leq C <\infty$ with $q=\gamma$. Similarly, under Assumption \ref{ass:boot-w},
		\begin{align*}
		\sum_{j=1}^{n} E \left[ \left(  ||S_{j}||_{2}   \right)^{2+q}  \right] \precsim  n^{-(1+0.5q)}\sum_{i=1}^{n}  ||Z_{i}||_{2}^{2+q} .
		\end{align*}
		
		Therefore,
		\begin{align*}
		& \sum_{i=1}^{n} E \left[ \left(  \mathbb{S}_{i:n}^{T} B_{i}   \right)^{2+q}  \right] \\
		\precsim &   n^{-(1+0.5q)} \sum_{i=1}^{n} ||Z_{i}||^{2+q}_{2}  \max \left\{ \left(   n^{-1} \sum_{j=1}^{n} ||Z_{j}||^{2}_{2} \right)^{1+0.5q}  , n^{-(1+0.5q)} \sum_{i=1}^{n} ||Z_{i}||_{2}^{2+q} \right\} \\
		\precsim & \max \left\{  n^{-(1+0.5q)} \sum_{i=1}^{n} ||Z_{i}||^{2+q}_{2}  \left(   n^{-1} \sum_{j=1}^{n} ||Z_{j}||^{2}_{2} \right)^{1+0.5q}  , n^{-(1+q)} \sum_{i=1}^{n} ||Z_{i}||_{2}^{4+2q} \right\} 
		\end{align*}
		where the last line follows from Jensen inequality. And, also note that $\sum_{i=1}^{n} E[(||B_{i}||_{2} )^{4+2q}] \precsim n^{-(2+q)} \sum_{i=1}^{n} ||Z_{i}||^{4+2q}_{2} $. 
		
		It is straightforward to check that analogous expressions hold for $\sum_{i=1}^{n} E \left[ \left(  \mathbb{S}_{i:n}^{T} A_{i}   \right)^{2+q}  \right]$ and $\sum_{i=1}^{n} E[(||A_{i}||_{2} )^{4+2q}]$.
		
		Recall that $q=\gamma$. Thus, $E_{\mathbf{P}_{n}}[ n^{-(2+q)} \sum_{i=1}^{n} ||Z_{i}||^{4+2q}_{2}  ] = n^{-(1+q)} E_{\mathbf{P}_{n}}[  ||Z_{1}||^{4+2q}_{2} ]$ which vanishes as $n \rightarrow \infty$ under Assumption \ref{ass:data-Z}(ii). Similarly,
		\begin{align*}
			E_{\mathbf{P}_{n}} \left[ \sum_{i=1}^{n} E \left[ \left(  \mathbb{S}_{i:n}^{T} B_{i}   \right)^{2+q}  \right] 1\{ Z^{n} \in K_{n} \} \right] 
		\end{align*}
		 (and $E_{\mathbf{P}_{n}} \left[ \sum_{i=1}^{n} E \left[ \left(  \mathbb{S}_{i:n}^{T} A_{i}   \right)^{2+q}  \right] 1\{ Z^{n} \in K_{n} \} \right] $) are bounded above (up to a constant) by $(M_{n})^{1+0.5q} n^{-(0.5q)} E_{\mathbf{P}_{n}}[  ||Z_{1}||^{2+q}_{2} ] + n^{-q} E_{\mathbf{P}_{n}}[  ||Z_{1}||^{4+q}_{2} ]$; both terms vanish as $n \rightarrow \infty$ under Assumption \ref{ass:data-Z}(ii) with $q= \gamma$.
		
		The desired result follows by the Markov inequality, since we proven that $E_{\mathbf{P}_{n}}[\mathbf{S}_{n} 1\{ K_{n}  \}]$ and $E_{\mathbf{P}_{n}}[\mathbf{R}_{n} 1\{ K_{n} \}]$ are of order $o(h^{-2})$.
	\end{proof}

			\subsection{Proof of Theorem \ref{thm:weak-norm-ori}}
			\label{app:proof-weak-ori}

			For the proof of Theorem \ref{thm:weak-norm-ori} we need the following simple lemma.

			\begin{lemma}\label{lem:mom-Gau} 
				Let $d \geq 1$ and let $X \in \mathbb{R}^{d}$ such that $X \sim N(0,A)$ for some $A$ positive definite. Then for any $q>0$
				\begin{align*}
				E[ ||X||^{2q}_{2} ] \leq C(q) (tr\{ A \})^{q}
				\end{align*}
				for some $C(q) \in (0,\infty)$.
			\end{lemma}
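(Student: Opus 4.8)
The plan is to reduce $\|X\|_{2}^{2}$ to a weighted sum of independent chi-square random variables and then apply Jensen's inequality with \emph{trace-normalized} weights, so that the resulting constant depends only on $q$ and not on $d$. First I would diagonalize $A = U\Lambda U^{T}$ with $U$ orthogonal and $\Lambda = \mathrm{diag}(\lambda_{1}(A),\dots,\lambda_{d}(A))$; since $A$ is positive definite, $X$ has the same distribution as $U\Lambda^{1/2}Z$ with $Z\sim N(0,I_{d})$, so that $\|X\|_{2}^{2}$ has the same distribution as $Z^{T}\Lambda Z = \sum_{l=1}^{d}\lambda_{l}(A)\,Z_{[l]}^{2}$, where $Z_{[1]},\dots,Z_{[d]}$ are i.i.d.\ standard normal. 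Writing $S \equiv \mathrm{tr}\{A\} = \sum_{l}\lambda_{l}(A) > 0$ and $p_{l} \equiv \lambda_{l}(A)/S$, we have $p_{l}\ge 0$, $\sum_{l}p_{l}=1$, and $\|X\|_{2}^{2} = S\sum_{l=1}^{d}p_{l}Z_{[l]}^{2}$ in distribution.

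For $q\ge 1$ the map $t\mapsto t^{q}$ is convex, so Jensen's inequality with respect to the probability weights $(p_{l})_{l=1}^{d}$ gives $\big(\sum_{l}p_{l}Z_{[l]}^{2}\big)^{q}\le \sum_{l}p_{l}Z_{[l]}^{2q}$; taking expectations and using that the $Z_{[l]}$ are identically distributed, $E\big[\big(\sum_{l}p_{l}Z_{[l]}^{2}\big)^{q}\big]\le E[Z_{[1]}^{2q}] =: C(q) < \infty$, since all absolute moments of a standard normal are finite ($E[Z_{[1]}^{2q}] = 2^{q}\Gamma(q+\tfrac12)/\sqrt{\pi}$). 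Hence $E[\|X\|_{2}^{2q}] = S^{q}\,E\big[\big(\sum_{l}p_{l}Z_{[l]}^{2}\big)^{q}\big]\le C(q)\,(\mathrm{tr}\{A\})^{q}$. For $0<q<1$ the map $t\mapsto t^{q}$ is concave, so Jensen's inequality in the form $E[Y^{q}]\le (E[Y])^{q}$ applied to the nonnegative random variable $Y=\|X\|_{2}^{2}$ gives directly $E[\|X\|_{2}^{2q}]\le (E[\|X\|_{2}^{2}])^{q} = (\mathrm{tr}\{A\})^{q}$, so $C(q)=1$ suffices. Choosing $C(q)=\max\{1,\,E[Z_{[1]}^{2q}]\}$ covers all $q>0$; the degenerate case $\mathrm{tr}\{A\}=0$ is excluded by positive definiteness but would be trivial anyway.

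I do not anticipate a genuine obstacle here. The one point that requires care — and the reason the naive route fails — is that the crude bound $\|X\|_{2}^{2}\le \lambda_{\max}(A)\,\|Z\|_{2}^{2}$ leads to a factor $E[\|Z\|_{2}^{2q}]$ that grows with $d$; the argument must therefore normalize by the full trace and invoke Jensen's inequality \emph{before} bounding the moments, which is exactly what keeps $C(q)$ free of $d$.
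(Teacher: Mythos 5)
Your proof is correct, and it reaches the bound by a genuinely more elementary route than the paper's. Both arguments start identically: diagonalize $A$, write $\|X\|_{2}^{2} \overset{d}{=} \mathrm{tr}\{A\}\sum_{l} c_{l}\,\xi_{l}^{2}$ with trace-normalized weights $c_{l}=\lambda_{l}(A)/\mathrm{tr}\{A\}$, and reduce to bounding $E[(\sum_{l}c_{l}\xi_{l}^{2})^{q}]$ by a constant free of $d$. Where you diverge is in that last step. The paper writes $E[(\sum_{l}c_{l}\xi_{l}^{2})^{q}]$ as a tail integral $q\int_{0}^{\infty}u^{q-1}\Pr(\sum_{l}c_{l}\xi_{l}^{2}\geq u)\,du$, applies the exponential Markov inequality with parameter $1/4$, and then uses Jensen on the \emph{exponential} function to pull the convex combination outside: $E[e^{0.25\sum_{l}c_{l}\xi_{l}^{2}}]\leq\sum_{l}c_{l}E[e^{0.25\xi_{l}^{2}}]$, which is finite since the $\chi^{2}_{1}$ moment generating function exists at $1/4$. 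You instead apply Jensen directly to the power function: for $q\geq 1$, convexity gives $(\sum_{l}c_{l}\xi_{l}^{2})^{q}\leq\sum_{l}c_{l}\xi_{l}^{2q}$ and hence the bound $E[\xi_{1}^{2q}]$; for $0<q<1$, concavity gives $E[Y^{q}]\leq(E[Y])^{q}$ with $C(q)=1$. Your version is shorter, avoids the integral representation entirely, and yields the explicit constant $\max\{1,\,2^{q}\Gamma(q+\tfrac12)/\sqrt{\pi}\}$; the paper's version is uniform in $q$ without case-splitting and reuses a tail-integral-plus-Chernoff template that appears elsewhere in the paper (e.g., the sub-Gaussian moment lemma in Section 2). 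Your closing remark correctly identifies why the naive bound via $\lambda_{\max}(A)\|Z\|_{2}^{2}$ fails, which is exactly the point of the trace normalization in both proofs.
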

			
			\begin{proof}[Proof of Lemma \ref{lem:mom-Gau}]

				Let $U \sim N(0,I_{d})$ and let $\Lambda$ be the diagonal matrix of eigenvalues of $A$ and $V$ the eigenvector matrix. For any $q>0$
				\begin{align*}
				E[ ||X||^{2q}_{2} ] & = E[( X^{T} X  )^{q} ] = E[( U^{T} A U  )^{q} ]  \\
				 & = E[( \xi^{T} \Lambda \xi  )^{q} ],~where~\xi=V^{T} U \\
				 & = tr\{ A \}^{q} E\left[  \left( \sum_{j=1}^{d} c_{j}(A) |\xi_{j}|^{2}  \right)^{q} \right]
				\end{align*}
				where $c_{j}(A) \equiv \frac{\lambda_{j}(A)}{\sum_{j=1}^{d}\lambda_{j}(A)}$. Since 
				\begin{align*}
				E\left[  \left( \sum_{j=1}^{d} c_{j}(A) |\xi_{j}|^{2}  \right)^{q} \right] = & \int_{0}^{\infty}  \Pr \left( \sum_{j=1}^{d} c_{j}(A) |\xi_{j}|^{2} \geq t^{1/q} \right) dt\\ 
				= & q \int_{0}^{\infty} u^{q-1} \Pr \left( \sum_{j=1}^{d} c_{j}(A) |\xi_{j}|^{2} \geq u\right) du\\
				\leq & q \int_{0}^{\infty} u^{q-1} e^{- 0.25 u } d u E \left[  e^{ 0.25 \sum_{j=1}^{d} c_{j}(A) |\xi_{j}|^{2} }\right]\\
				\leq & q \int_{0}^{\infty} u^{q-1} e^{- 0.25 u } d u  \sum_{j=1}^{d} c_{j}(A) E \left[  e^{ 0.25 |\xi_{j}|^{2} }\right] 
				\end{align*}
				where the third line follows from the Markov inequality and the fourth from Jensen inequality. The  result follows from the fact that $q \int_{0}^{\infty} u^{q-1} e^{- 0.25 u } d u \leq C < \infty$ and $|\xi_{j}|^{2} \sim \chi^{2}$ and  $\sum_{j=1}^{d} c_{j}(A) = 1$. 
			\end{proof}

			\bigskip 
			
			\begin{proof}[Proof of Theorem \ref{thm:weak-norm-ori}]
				
				Firs note that we can always write $\mathbb{V}_{n} \equiv n^{-1} \sum_{i=1}^{n} V_{i,n} $ with $V_{i,n} \sim i.i.d.-N(0,\Sigma_{n})$. 
				
				The strategy of proof consists of applying the results in  Theorem \ref{thm:LFQ-bound} and Lemma \ref{lem:LFQ-bound}, with $A_{i} = n^{-1/2} Z_{i}$ and $B_{i} = n^{-1/2} V_{i,n}$. Observe that $E[A_{i}A_{i}^{T}] = E[B_{i}B_{i}^{T}] = \Sigma_{n}$.

				\bigskip

				\textsc{The term $\mathbf{S}_{n}$.} For this case $\sum_{i=1}^{n} E[(||B_{i}||_{2} )^{4}] =  n^{-2} \sum_{i=1}^{n} E[||V_{i,n}||^{4}_{2}] = n^{-1}  E[||V_{1,n}||^{4}_{2}]$ and $\sum_{i=1}^{n} E[(||A_{i}||_{2} )^{4}] = n^{-2} \sum_{i=1}^{n} E[ ||Z_{i}||^{4}_{2} ] = n^{-1} E[||Z_{1}||^{4}_{2}]$. Therefore, $\mathbf{S}_{1,n}$ in Theorem \ref{thm:LFQ-bound} is bounded above (up to a constant) by $h^{-2} n^{-1}  \left(  E[ ||Z_{1}||^{4}_{2} ] + E[ ||V_{1,n}||^{4}_{2} ] \right)$, and by Lemma \ref{lem:mom-Gau}, this implies that\begin{align*}
				\mathbf{S}_{1,n} \precsim h^{-2} n^{-1}  \left(  E[ ||Z_{1}||^{4}_{2} ] + (tr\{ \Sigma_{n} \})^{2} \right)
				\end{align*}
				both terms are of order $o(h^{-2})$ under Assumption \ref{ass:data-Z}(ii). 
				
				Observe that in this case $E[S_{j}S_{j}^{T}] =n^{-1} \Sigma_{n}$ and thus 
				\begin{align*}
				\mathbf{S}_{3,n} \precsim & h^{-2} \sqrt{ tr\{ \Sigma_{n}  \} } n^{-3/2} \sum_{i=1}^{n} ( E[ ||Z_{i}||^{3}_{2} ] + E[ ||V_{i,n}||^{3}_{2} ] ) \\
				= &  h^{-2} \sqrt{ tr\{ \Sigma_{n}  \} } n^{-1/2} ( E[ ||Z_{1}||^{3}_{2} ] + E[ ||V_{1,n}||^{3}_{2} ]) .
				\end{align*}
				
				By Lemma \ref{lem:mom-Gau}, $E[ ||V_{1,n}||^{3}_{2} ] = (tr \{ \Sigma_{n}  \})^{3/2} $. Thus, by Assumption \ref{ass:data-Z}(i), $\mathbf{S}_{2,n}$ is of order $o(h^{-2})$. 
				
				We thus have established that $\mathbf{S}_{n}$ in Theorem \ref{thm:LFQ-bound} vanishes. We now establish that $\mathbf{R}_{n}$ also vanishes.

				%

				\bigskip

				\textsc{The remainder terms,  $\mathbf{R}_{n}$.} To bound the remainder term in the expression of Theorem \ref{thm:LFQ-bound} we use Lemma \ref{lem:LFQ-bound}, $L_{2}(f) = h^{-2}$ and also set $q=\gamma$. Observe that $\left(  tr \left\{  \sum_{j=1}^{n}  E \left[  \left( S_{j}^{T} S_{j}    \right) \right] \right\} \right)^{1+0.5q}  = \left(  tr \left\{  \Sigma_{n} \right\} \right)^{1+0.5q}$ . Also,\begin{align*}
					\sum_{i=1}^{n} E \left[ \left(  ||B_{i}||_{2}   \right)^{2+q}  \right] = n^{-0.5q} E[||V_{1}||^{2+q}_{2} ] \precsim n^{-0.5q} (tr\{ \Sigma_{n} \})^{1+0.5q}
				\end{align*}
				by Lemma \ref{lem:mom-Gau}. Therefore,\begin{align*}
					\sum_{i=1}^{n} E \left[ \left(  \mathbb{S}_{i:n}^{T} B_{i}   \right)^{2+q}  \right] \precsim &  n^{-0.5q} (tr\{ \Sigma_{n} \})^{1+0.5q} \max \left\{  (tr\{ \Sigma_{n} \})^{1+0.5q} , \sum_{j=1}^{n} E[||S_{j}||^{2+q}_{2} ]        \right\}.
				\end{align*}
				
					Observe that\begin{align*}
						\sum_{j=1}^{n} E \left[ \left(  ||S_{j}||_{2}   \right)^{2+q}  \right] \precsim n^{-(1+0.5q)} \left( \sum_{j=1}^{i-1} E \left[ \left(  ||Z_{j}||_{2}   \right)^{2+q}  \right] +  (n-i) tr\{ \Sigma_{n} \}^{1+0.5q} \right)
					\end{align*}
					 by Lemma \ref{lem:mom-Gau}. Under Assumption \ref{ass:data-Z}(ii),
					\begin{align*}
					\sum_{j=1}^{n} E \left[ \left(  ||S_{j}||_{2}   \right)^{2+q}  \right] \precsim & n^{-(1+0.5q)} \left( i  E \left[ \left(  ||Z_{1}||_{2}   \right)^{2+q}  \right] +  (n-i) tr\{ \Sigma_{n} \}^{1+0.5q} \right) \\
					\leq  & n^{-(0.5q)} \left(  E \left[ \left(  ||Z_{1}||_{2}   \right)^{2+q}  \right] +   tr\{ \Sigma_{n} \}^{1+0.5q} \right) \rightarrow 0,~as~n \rightarrow 0
					\end{align*}
					because, $n^{-(0.5q)} tr\{ \Sigma_{n} \}^{1+0.5q}  = \left( n^{-1/2} tr\{ \Sigma_{n} \}^{0.5+1/q}  \right)^{q}$ and with $q=\gamma>2$ is implied by Assumption \ref{ass:data-Z}(ii); and due to Jensen inequality\\ $n^{-(0.5q)} E \left[ \left(  ||Z_{1}||_{2}   \right)^{2+q}  \right]  \leq \sqrt{n^{-q} E \left[ \left(  ||Z_{1}||_{2}   \right)^{4+2q}  \right] }$ which vanishes for $q = \gamma$. 
					
					Also, by Assumption \ref{ass:data-Z}(ii), $n^{-(0.5q)} (tr\{ \Sigma_{n} \})^{2+q} \rightarrow 0$ as $ n \rightarrow \infty$. Finally, note that, by Lemma \ref{lem:mom-Gau},
					 $\sum_{i=1}^{n} E[(||B_{i}||_{2} )^{4+2q}] \precsim n^{-(2+q)} \sum_{i=1}^{n} E[||V_{i,n}||^{4+2q}_{2} ] \precsim n^{-(1+q)} (tr\{ \Sigma_{n} \})^{2+q}$.
					 By Assumption \ref{ass:data-Z}(ii) and the previous calculations, $n^{-(1+q)} (tr\{ \Sigma_{n} \})^{2+q} = o(1)$. Similarly,\\ $\sum_{i=1}^{n} E[(||A_{i}||_{2} )^{4+2q}] \precsim n^{-(2+q)} \sum_{i=1}^{n} E[||Z_{i}||^{4+2q}_{2} ] = n^{-(1+q)} E[||Z_{1}||^{4+2q}_{2} ] = o(1)$ by Assumption \ref{ass:data-Z}(ii).\\

					We have established that the remainder term $\mathbf{R}_{n}$ in Theorem \ref{thm:LFQ-bound} vanishes, and thus the desired result follows.	
				\end{proof}

	\section{Proofs of Lemmas in Section \ref{sec:proof-boot}}
	\label{app:lemmas-main}

	In order to prove the lemmas in Section \ref{sec:proof-boot} we need the following lemmas. 
	
	\subsection{Supplementary Lemmas}
	
	Let for any $t \in \mathbb{R} $, $\delta> 0$, $ n \in \mathbb{N}$, and $h > 0$
		\begin{align*}
		\mathcal{P}_{t,\delta,h}(||x||^{2}_{2}) = \int p_{t,\delta}(||x||^{2}_{2} + h z) \phi(z)dz,~\forall x \in \mathbb{R}^{d(n)}
		\end{align*}
		where $\mathbb{R} \ni u \mapsto p_{t,\delta}(u) = 1\{ u \geq t\} +
		\frac{u-t+\delta}{\delta} 1\{ u \in (t-\delta,t)\}$ and $\phi$ is the standard Gaussian pdf.
		
		The next three lemmas show that we can use $\mathcal{P}_{t,\delta,h}(\cdot)$ to approximate the indicator function $1\{ \cdot \geq t  \}$ in expectation for the variables $||\sqrt{n} \mathbb{Z}^{\ast}_{n}||_{2}$, $||\sqrt{n} \mathbb{V}_{n}||_{2}$ and $||\sqrt{n} \mathbb{Z}_{n}||_{2}$, respectively.
		
		\begin{lemma}\label{lem:Zn-strong-weak} 
			For any $\varepsilon \in (0,1)$, $\delta > 0$ and $n \in \mathbb{N}$, there exists $ h(\delta,\varepsilon) = \frac{\delta}{\Phi^{-1}(\varepsilon)}$ such that for all $h \leq  h(\delta,\varepsilon)$:
			
			(i) 
			\begin{align}
			E_{\mathbf{P}^{\ast}_{n}} \left[ 1\{ ||\sqrt{n} \mathbb{Z}^{\ast}_{n}||^{2}_{2} \geq t \}  |Z^{n} \right] \leq \frac{1}{1-\varepsilon} E_{\mathbf{P}^{\ast}_{n}} \left[ \mathcal{P}_{t-\delta,\delta,h}(||\sqrt{n} \mathbb{Z}^{\ast}_{n}||^{2}_{2}) |Z^{n} \right].
			\end{align}
			
			(ii)
			\begin{align}
			E_{\mathbf{P}^{\ast}_{n}} \left[ 1\{ ||\sqrt{n} \mathbb{Z}^{\ast}_{n}||^{2}_{2} \geq t \}  |Z^{n} \right] \geq \frac{1}{1-\varepsilon} E_{\mathbf{P}^{\ast}_{n}} \left[ \mathcal{P}_{t+2\delta,\delta,h}(||\sqrt{n} \mathbb{Z}^{\ast}_{n}||^{2}_{2}) |Z^{n} \right] - \frac{\varepsilon}{1-\varepsilon}.
			\end{align}
		\end{lemma}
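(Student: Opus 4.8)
The claim is really a deterministic statement about the univariate smoothing kernel, so the plan is to first prove \emph{pointwise} inequalities between $u\mapsto 1\{u\ge t\}$ and the smoothed ramps $u\mapsto \mathcal{P}_{t-\delta,\delta,h}(u)$, resp. $u\mapsto \mathcal{P}_{t+2\delta,\delta,h}(u)$, valid for every $u\in\mathbb{R}$, and then apply $E_{\mathbf{P}^{\ast}_{n}}[\,\cdot\mid Z^{n}]$ to the random point $u=||\sqrt{n}\mathbb{Z}^{\ast}_{n}||^{2}_{2}$. Since conditional expectation preserves pointwise inequalities, nothing about the distribution of $\mathbb{Z}^{\ast}_{n}$ enters; the argument is purely about the ramp $p_{s,\delta}$ and the Gaussian kernel $\phi$. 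Recall that $p_{s,\delta}$ vanishes on $(-\infty,s-\delta]$, rises linearly on $(s-\delta,s)$, and equals $1$ on $[s,\infty)$; in particular $p_{t-\delta,\delta}\equiv 1$ on $[t-\delta,\infty)$ and $p_{t+2\delta,\delta}\equiv 0$ on $(-\infty,t+\delta]$. The threshold $h(\delta,\varepsilon)$ is precisely the largest $h$ for which $\Phi(\delta/h)\ge 1-\varepsilon$, equivalently $P(Z\ge-\delta/h)\ge 1-\varepsilon$ for $Z\sim N(0,1)$; this is the only quantitative input needed.

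For (i): if $u<t$ the left-hand side is $0$ and the inequality is immediate since $\mathcal{P}_{t-\delta,\delta,h}\ge 0$. If $u\ge t$, then $u+hz\ge t-\delta$ for every $z\ge-\delta/h$, so $p_{t-\delta,\delta}(u+hz)=1$ on that set, whence
\begin{align*}
\mathcal{P}_{t-\delta,\delta,h}(u)=\int p_{t-\delta,\delta}(u+hz)\phi(z)\,dz\geq\int_{-\delta/h}^{\infty}\phi(z)\,dz=\Phi(\delta/h)\geq 1-\varepsilon,
\end{align*}
so that $1=1\{u\ge t\}\le (1-\varepsilon)^{-1}\mathcal{P}_{t-\delta,\delta,h}(u)$ on $\{u\ge t\}$; combining the two cases and taking $E_{\mathbf{P}^{\ast}_{n}}[\,\cdot\mid Z^{n}]$ gives (i). For (ii) I would rewrite the target as $(1-\varepsilon)\,1\{u\ge t\}+\varepsilon\geq \mathcal{P}_{t+2\delta,\delta,h}(u)$ for every $u$. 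If $u\ge t$ the left side equals $1$, which dominates $\mathcal{P}_{t+2\delta,\delta,h}(u)\le 1$. If $u<t$, then $u+hz\le t+\delta$ for every $z\le\delta/h$, so $p_{t+2\delta,\delta}(u+hz)=0$ there, hence $p_{t+2\delta,\delta}(u+hz)\le 1\{z>\delta/h\}$ and $\mathcal{P}_{t+2\delta,\delta,h}(u)\le P(Z>\delta/h)=1-\Phi(\delta/h)\le\varepsilon$. Combining cases and applying $E_{\mathbf{P}^{\ast}_{n}}[\,\cdot\mid Z^{n}]$ yields (ii).

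There is no serious obstacle; the only thing to get right is the bookkeeping of the three shifts and the ``$h$ small relative to $\delta$'' requirement encoded in $h(\delta,\varepsilon)$. The asymmetry between $t-\delta$ and $t+2\delta$ is forced by the ramp rising on $(s-\delta,s)$ rather than $(s,s+\delta)$: in (i) the kink must be pushed far enough to the left that the ramp has already saturated at $1$ on $\{u\ge t\}$ even after subtracting the $\delta/h$ smoothing margin, while in (ii) it must be pushed far enough to the right that the ramp is still identically $0$ on $\{u<t\}$ after adding the same margin. I would also record that this construction follows the suggestion in \cite{Pollard_01} p.~247, and that the very same pointwise inequalities are what Lemmas \ref{lem:Vn-strong-weak} and \ref{lem:ZZ-strong-weak} reuse for $\sqrt{n}\mathbb{V}_{n}$ and $\sqrt{n}\mathbb{Z}_{n}$, so it is worth stating them in a form that does not depend on the particular random variable.
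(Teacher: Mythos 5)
Your proposal is correct and follows essentially the same route as the paper's own proof: both establish the pointwise inequalities $\mathcal{P}_{t-\delta,\delta,h}(u)\ge(1-\varepsilon)1\{u\ge t\}$ and $\mathcal{P}_{t+2\delta,\delta,h}(u)\le(1-\varepsilon)1\{u\ge t\}+\varepsilon$ via the Gaussian tail bound $\Phi(-\delta/h)\le\varepsilon$ and then take conditional expectations. The only difference is cosmetic — you case-split on $u$ directly for the shifted kernels, while the paper states the inequalities for $\mathcal{P}_{t,\delta,h}$ against shifted indicators and substitutes.
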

		
			\begin{lemma}\label{lem:Vn-strong-weak} 
				For any $\varepsilon \in (0,1)$, $\delta > 0$ and $n \in \mathbb{N}$, there exists $ h(\delta,\varepsilon) = \frac{\delta}{\Phi^{-1}(\varepsilon)}$ such that for all $h \leq  h(\delta,\varepsilon)$:
				
				(i) 
				\begin{align}
				E_{\boldsymbol{\Phi}_{n}} \left[ 1\{ ||\sqrt{n}\mathbb{V}_{n}||^{2}_{2} \geq t \}   \right] \leq \frac{1}{1-\varepsilon} E_{\boldsymbol{\Phi}_{n}} \left[ \mathcal{P}_{t-\delta,\delta,h}(||\sqrt{n} \mathbb{V}_{n}||^{2}_{2})  \right].
				\end{align}
				
				(ii)
				\begin{align}
				E_{\boldsymbol{\Phi}_{n}} \left[ 1\{ ||\sqrt{n}\mathbb{V}_{n}||^{2}_{2} \geq t \}   \right] \geq \frac{1}{1-\varepsilon} E_{\boldsymbol{\Phi}_{n}} \left[ \mathcal{P}_{t+2\delta,\delta,h}(||\sqrt{n} \mathbb{V}_{n}||^{2}_{2})  \right] - \frac{\varepsilon}{1-\varepsilon}.
				\end{align}
			\end{lemma}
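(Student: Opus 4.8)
The plan is to run the same elementary sandwiching argument as for Lemma \ref{lem:Zn-strong-weak}, now with the Gaussian quadratic form $Y \equiv ||\sqrt{n}\,\mathbb{V}_{n}||^{2}_{2}$ in place of $||\sqrt{n}\,\mathbb{Z}^{\ast}_{n}||^{2}_{2}$; since there is no conditioning on $Z^{n}$, the argument is if anything slightly cleaner. Everything rests on two facts that do not use the law of $Y$. First, from the piecewise-linear definition of $p_{s,\delta}$ one has, for every real $u$, the pointwise bounds $1\{u \geq s\} \leq p_{s,\delta}(u) \leq 1\{u > s-\delta\}$. Second, by definition $\mathcal{P}_{s,\delta,h}(Y) = E_{z}\!\left[ p_{s,\delta}(Y + hz) \right]$ for $z \sim N(0,1)$ independent of $Y$, so combining the two bounds sandwiches $\mathcal{P}_{s,\delta,h}(Y)$ between the explicit Gaussian tail probabilities $E_{z}[1\{Y+hz \geq s\}]$ and $E_{z}[1\{Y+hz > s-\delta\}]$.

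For part (i) I take $s = t-\delta$. The lower sandwich bound gives $\mathcal{P}_{t-\delta,\delta,h}(Y) \geq E_{z}[1\{Y + hz \geq t-\delta\}] = \Phi\!\left( \tfrac{Y - t + \delta}{h} \right)$. On the event $\{Y \geq t\}$ the argument is at least $\delta/h$, hence the right-hand side is at least $\Phi(\delta/h)$, and the choice $h \leq h(\delta,\varepsilon)$ is exactly what makes $\Phi(\delta/h) \geq 1-\varepsilon$. Thus $\mathcal{P}_{t-\delta,\delta,h}(Y) \geq (1-\varepsilon)\,1\{Y \geq t\}$ pointwise; taking $E_{\boldsymbol{\Phi}_{n}}$ and dividing by $1-\varepsilon$ yields (i).

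For part (ii) I take $s = t+2\delta$. The upper sandwich bound gives $\mathcal{P}_{t+2\delta,\delta,h}(Y) \leq E_{z}[1\{Y + hz > t+\delta\}] = 1 - \Phi\!\left( \tfrac{t + \delta - Y}{h} \right)$. On $\{Y < t\}$ the argument exceeds $\delta/h$, so this is at most $1 - \Phi(\delta/h) \leq \varepsilon$; on $\{Y \geq t\}$ I simply use $\mathcal{P}_{t+2\delta,\delta,h}(Y) \leq 1$. Hence $\mathcal{P}_{t+2\delta,\delta,h}(Y) \leq 1\{Y \geq t\} + \varepsilon\,1\{Y < t\}$ pointwise, and taking $E_{\boldsymbol{\Phi}_{n}}$ gives $E_{\boldsymbol{\Phi}_{n}}[\mathcal{P}_{t+2\delta,\delta,h}(Y)] \leq P(Y \geq t) + \varepsilon\bigl(1 - P(Y \geq t)\bigr) = (1-\varepsilon)P(Y \geq t) + \varepsilon$, which rearranges to (ii).

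There is no real obstacle: the statement is a purely analytic consequence of the definitions once they are unpacked, and it in fact holds verbatim for an arbitrary random variable $Y$ (the Gaussianity of $\mathbb{V}_{n}$ plays no role). The only point requiring care is the calibration of the triple $(h,\delta,\varepsilon)$: one must take $h$ small enough relative to $\delta$ that a standard normal assigns mass at least $1-\varepsilon$ to the half-line $[-\delta/h,\infty)$, i.e. $\Phi(\delta/h) \geq 1-\varepsilon$, which is precisely what $h \leq h(\delta,\varepsilon)$ encodes — every inequality above uses this and nothing more. That $\delta$ itself may be taken large is irrelevant here; the smallness of $\delta$ becomes relevant only downstream, when $\mathcal{P}_{\cdot,\delta,h}$ is used to approximate the indicator together with the anti-concentration bound of Lemma \ref{lem:anticoncentration-V}.
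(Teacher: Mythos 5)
Your proposal is correct and follows essentially the same route as the paper, which proves the analogous bounds for $\mathcal{P}_{t,\delta,h}$ (namely $\mathcal{P}_{t,\delta,h}(u) \geq (1-\varepsilon)1\{u \geq t+\delta\}$ and $\mathcal{P}_{t,\delta,h}(u) \leq (1-\varepsilon)1\{u \geq t-2\delta\}+\varepsilon$ for $h \leq h(\delta,\varepsilon)$) and then shifts the threshold, exactly as you do by working directly at $s=t-\delta$ and $s=t+2\delta$; your observation that Gaussianity of $\mathbb{V}_{n}$ plays no role is precisely why the paper can declare this proof identical to that of Lemma \ref{lem:Zn-strong-weak}.
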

			
				\begin{lemma}\label{lem:ZZ-strong-weak} 
					For any $\varepsilon \in (0,1) $, $\delta > 0$ and $n \in \mathbb{N}$, there exists $ h(\delta,\varepsilon) = \frac{\delta}{\Phi^{-1}(\varepsilon)}$ such that for all $h \leq  h(\delta,\varepsilon)$:
					
					(i) 
					\begin{align}
					E_{\mathbf{P}_{n}} \left[ 1\{ ||\sqrt{n} \mathbb{Z}_{n}||^{2}_{2} \geq t \}   \right] \leq \frac{1}{1-\varepsilon} E_{\mathbf{P}_{n}} \left[ \mathcal{P}_{t-\delta,\delta,h}(||\sqrt{n} \mathbb{Z}_{n}||^{2}_{2})  \right] .
					\end{align}
					
					(ii)
					\begin{align}
					E_{\mathbf{P}_{n}} \left[ 1\{ ||\sqrt{n} \mathbb{Z}_{n}||^{2}_{2} \geq t \}   \right] \geq \frac{1}{1-\varepsilon} E_{\mathbf{P}_{n}} \left[ \mathcal{P}_{t+2\delta,\delta,h}(||\sqrt{n} \mathbb{Z}_{n}||^{2}_{2})  \right] - \frac{\varepsilon}{1-\varepsilon}.
					\end{align}
				\end{lemma}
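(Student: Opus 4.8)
The plan is to prove Lemma~\ref{lem:ZZ-strong-weak} by reducing it to a purely deterministic comparison of one-dimensional functions: the random variable $\|\sqrt{n}\mathbb{Z}_{n}\|_{2}^{2}$ enters only through its law, so both inequalities follow by taking $E_{\mathbf{P}_{n}}$-expectations of bounds that hold for \emph{every} real argument. Indeed, Lemmas~\ref{lem:Zn-strong-weak} and \ref{lem:Vn-strong-weak} are the same statement applied to $\|\sqrt{n}\mathbb{Z}^{\ast}_{n}\|_{2}^{2}$ and $\|\sqrt{n}\mathbb{V}_{n}\|_{2}^{2}$, so one argument covers all three; this is the smoothing device suggested in \cite{Pollard_01} p.~247.

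First I would record the elementary sandwich for the ramp: from the definition $p_{t,\delta}(v)=1\{v\geq t\}+\frac{v-t+\delta}{\delta}1\{v\in(t-\delta,t)\}$ one has, for all $v\in\mathbb{R}$,
\begin{align*}
1\{v\geq t\}\ \leq\ p_{t,\delta}(v)\ \leq\ 1\{v\geq t-\delta\}.
\end{align*}
For part (i) I would work with the shifted ramp $p_{t-\delta,\delta}$, so that $\mathcal{P}_{t-\delta,\delta,h}(u)=E[p_{t-\delta,\delta}(u+hZ)]$ with $Z\sim N(0,1)$. Using $p_{t-\delta,\delta}(v)\geq 1\{v\geq t-\delta\}$, for any $u\geq t$,
\begin{align*}
\mathcal{P}_{t-\delta,\delta,h}(u)\ \geq\ \Pr\big(u+hZ\geq t-\delta\big)\ =\ \Pr\Big(Z\geq -\tfrac{\delta+(u-t)}{h}\Big)\ \geq\ \Phi(\delta/h).
\end{align*}
Choosing $h\leq h(\delta,\varepsilon)$ — the threshold at which $1-\Phi(\delta/h)\leq\varepsilon$, which is independent of $t$ and of $n$ — gives $\mathcal{P}_{t-\delta,\delta,h}(u)\geq 1-\varepsilon\geq(1-\varepsilon)\,1\{u\geq t\}$; for $u<t$ the bound $1\{u\geq t\}=0\leq (1-\varepsilon)^{-1}\mathcal{P}_{t-\delta,\delta,h}(u)$ is trivial. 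Hence $1\{u\geq t\}\leq(1-\varepsilon)^{-1}\mathcal{P}_{t-\delta,\delta,h}(u)$ for every $u$, and taking $E_{\mathbf{P}_{n}}$ at $u=\|\sqrt{n}\mathbb{Z}_{n}\|_{2}^{2}$ yields (i).

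For part (ii) I would instead use $p_{t+2\delta,\delta}$, which satisfies $p_{t+2\delta,\delta}(v)\leq 1\{v\geq t+\delta\}$. For $u<t$,
\begin{align*}
\mathcal{P}_{t+2\delta,\delta,h}(u)\ \leq\ \Pr\big(u+hZ\geq t+\delta\big)\ =\ \Pr\Big(Z\geq \tfrac{\delta+(t-u)}{h}\Big)\ \leq\ 1-\Phi(\delta/h)\ \leq\ \varepsilon
\end{align*}
for $h\leq h(\delta,\varepsilon)$, while for $u\geq t$ one only needs $\mathcal{P}_{t+2\delta,\delta,h}(u)\leq 1$. In both regimes $\mathcal{P}_{t+2\delta,\delta,h}(u)\leq(1-\varepsilon)\,1\{u\geq t\}+\varepsilon$, i.e. $1\{u\geq t\}\geq(1-\varepsilon)^{-1}\mathcal{P}_{t+2\delta,\delta,h}(u)-\varepsilon/(1-\varepsilon)$; taking $E_{\mathbf{P}_{n}}$ gives (ii). The argument uses nothing about $\mathbf{P}_{n}$ beyond that it is a probability measure, and the only quantitative input is the standard Gaussian tail bound, uniform in $t$ and $n$; so there is essentially no obstacle. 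The sole point demanding care is the bookkeeping of the shift directions ($t-\delta$ in (i), $t+2\delta$ in (ii)), chosen precisely so that the Gaussian convolution displaces the ramp the correct way and the slack $\varepsilon$ lands on the side that makes each inequality go through.
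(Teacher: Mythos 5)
Your proof is correct and follows essentially the same route as the paper, which proves Lemma \ref{lem:Zn-strong-weak} by exactly this argument (pointwise sandwich of the ramp $p_{t,\delta}$ between indicators, Gaussian tail bound uniform in $t$, then shift $t\mapsto t-\delta$ resp. $t\mapsto t+2\delta$) and then notes that Lemma \ref{lem:ZZ-strong-weak} is identical since only the law of the scalar argument matters. Your observation that one deterministic inequality in $u$ covers all three smoothing lemmas is precisely the paper's implicit justification for omitting this proof.
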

				
				\begin{lemma}\label{lem:anticoncentration-V} 
					Suppose Assumption \ref{ass:data-Z}(i) holds. For any $\varepsilon>0$, there exists a $N(\varepsilon)$ and $\gamma(\varepsilon)$ such that for all $\gamma \leq  \gamma(\varepsilon)$ and all $ n \geq N(\varepsilon)$:
					\begin{align}
					\sup_{t} \boldsymbol{\Phi}_{n} \left(   | ||\sqrt{n}\mathbb{V}_{n}||^{2}_{2} - t | \leq  \sqrt{ tr\{ \Sigma_{n}^{2} \} } \gamma  \right) \leq \varepsilon .
					\end{align}
				\end{lemma}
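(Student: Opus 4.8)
The plan is to control the L\'evy concentration function of $W_{n} \equiv ||\sqrt{n}\mathbb{V}_{n}||^{2}_{2}$ through its characteristic function. Since $\sqrt{n}\mathbb{V}_{n}\sim N(0,\Sigma_{n})$, diagonalizing $\Sigma_{n}=O\Lambda O^{T}$ with $\Lambda=\mathrm{diag}(\lambda_{1}(\Sigma_{n}),\dots,\lambda_{d(n)}(\Sigma_{n}))$ gives that $W_{n}$ equals in distribution $\sum_{j=1}^{d(n)}\lambda_{j}(\Sigma_{n})\,\xi_{j}^{2}$ with $\xi_{1},\dots,\xi_{d(n)}$ i.i.d.\ $N(0,1)$. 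Hence the characteristic function factorizes, $\phi_{W_{n}}(u)=\prod_{j=1}^{d(n)}(1-2i\lambda_{j}(\Sigma_{n})u)^{-1/2}$, and using $\lambda_{j}(\Sigma_{n})\ge c$ from Assumption \ref{ass:data-Z}(i),
\begin{align*}
	|\phi_{W_{n}}(u)|=\prod_{j=1}^{d(n)}(1+4\lambda_{j}(\Sigma_{n})^{2}u^{2})^{-1/4}\le (1+4c^{2}u^{2})^{-d(n)/4},\qquad u\in\mathbb{R}.
\end{align*}

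First I would invoke Esseen's concentration inequality (see, e.g., Petrov, \emph{Sums of Independent Random Variables}, Ch.\ III): there is a universal $C_{0}$ such that for every real random variable $X$ and every $\lambda>0$, $\sup_{t}\Pr(X\in[t,t+\lambda])\le C_{0}\lambda\int_{|u|\le 1/\lambda}|\phi_{X}(u)|\,du$. Applying this with $X=W_{n}$, $\lambda=2\eta_{n}$ where $\eta_{n}\equiv\sqrt{tr\{\Sigma_{n}^{2}\}}\,\gamma$, and noting that $|W_{n}-t|\le\eta_{n}$ is the event that $W_{n}$ lies in an interval of length $2\eta_{n}$, the substitution $v=2cu$ yields
\begin{align*}
	\sup_{t}\boldsymbol{\Phi}_{n}\big(|W_{n}-t|\le\eta_{n}\big)\le 2C_{0}\,\eta_{n}\int_{|u|\le 1/\eta_{n}}(1+4c^{2}u^{2})^{-d(n)/4}\,du=\frac{C_{0}\,\eta_{n}}{c}\int_{|v|\le 2c/\eta_{n}}(1+v^{2})^{-d(n)/4}\,dv.
\end{align*}
It remains to bound the right-hand side by a constant (depending only on $c,C$) times a power of $\gamma$, uniformly in $n$.

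Here the bounds $c^{2}d(n)\le tr\{\Sigma_{n}^{2}\}\le C^{2}d(n)$ from Assumption \ref{ass:data-Z}(i) give $c\sqrt{d(n)}\,\gamma\le\eta_{n}\le C\sqrt{d(n)}\,\gamma$. If $d(n)\ge 3$ I would extend the integral to all of $\mathbb{R}$: $\int_{\mathbb{R}}(1+v^{2})^{-d/4}dv=B(1/2,d/4-1/2)$, which by Stirling's formula is $\le K d^{-1/2}$ for a universal $K$ and all $d\ge 3$; combining, the displayed bound is $\le \frac{C_{0}}{c}\cdot C\sqrt{d(n)}\,\gamma\cdot K\,d(n)^{-1/2}=\frac{C_{0}CK}{c}\,\gamma$. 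If $d(n)\in\{1,2\}$ the integral over $\mathbb{R}$ diverges and one keeps the truncation at $2c/\eta_{n}\asymp\gamma^{-1}$: $\int_{|v|\le 2c/\eta_{n}}(1+v^{2})^{-1/2}dv\precsim\log(1/\gamma)$ for $d(n)=2$ and $\int_{|v|\le 2c/\eta_{n}}(1+v^{2})^{-1/4}dv\precsim\gamma^{-1/2}$ for $d(n)=1$, so (using $\eta_{n}\asymp\gamma$ in these cases) the displayed bound is $\precsim\gamma\log(1/\gamma)$, resp.\ $\precsim\gamma^{1/2}$. In every case $\sup_{t}\boldsymbol{\Phi}_{n}(|W_{n}-t|\le\eta_{n})\le C_{1}(c,C)\,\gamma^{1/2}$ for all $\gamma\in(0,1]$ and all $n$; taking $\gamma(\varepsilon)=\min\{1,(\varepsilon/C_{1}(c,C))^{2}\}$ and $N(\varepsilon)$ arbitrary gives the claim.

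The main obstacle is obtaining one estimate valid uniformly in $n$ across the full admissible range of $d(n)$ — from bounded to diverging. The $d$-dependence of $\int_{\mathbb{R}}(1+v^{2})^{-d/4}dv\asymp d^{-1/2}$ is exactly what cancels the $\sqrt{tr\{\Sigma_{n}^{2}\}}\asymp\sqrt{d(n)}$ inside $\eta_{n}$ when $d(n)$ is large, but for $d(n)\in\{1,2\}$ that integral is infinite, forcing one to exploit instead the truncation in Esseen's inequality at the cost of the weaker power $\gamma^{1/2}$ (indeed $\gamma\log(1/\gamma)$) rather than $\gamma$. A cleaner but less uniform alternative would be a Berry--Esseen bound for $\sum_{j}\lambda_{j}(\Sigma_{n})\xi_{j}^{2}$ combined with the Gaussian density bound (valid only once $d(n)$ is large) together with a direct density argument for bounded $d(n)$.
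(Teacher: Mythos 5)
Your proof is correct, but it takes a genuinely different route from the paper. The paper diagonalizes $\Sigma_{n}$ to write $||\sqrt{n}\mathbb{V}_{n}||^{2}_{2}$ as $\sum_{l}\lambda_{l}\zeta_{l}^{2}$ and then splits into two regimes: for bounded $d(n)$ it appeals to the absence of atoms of the Gamma distribution, and for $d(n)\to\infty$ it applies the Berry--Esseen theorem to the centered, normalized sum $\sum_{l}\lambda_{l}(\zeta_{l}^{2}-1)/\sqrt{2\,tr\{\Sigma_{n}^{2}\}}$ (using $tr\{\Sigma_{n}^{3}\}/(tr\{\Sigma_{n}^{2}\})^{3/2}\to 0$ under Assumption \ref{ass:data-Z}(i)) and transfers the anti-concentration from the standard normal. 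You instead attack the L\'evy concentration function directly through Esseen's inequality and the explicit characteristic function $\prod_{j}(1-2i\lambda_{j}u)^{-1/2}$, using only the eigenvalue lower bound $\lambda_{j}\ge c$; your computations (the modulus bound $(1+4c^{2}u^{2})^{-d/4}$, the Beta-function evaluation $\asymp d^{-1/2}$ cancelling $\sqrt{tr\{\Sigma_{n}^{2}\}}\asymp\sqrt{d}$, and the separate treatment of $d\in\{1,2\}$) all check out. What your approach buys is an explicit, non-asymptotic bound of order $\gamma^{1/2}$ (order $\gamma$ once $d(n)\ge 3$) that is uniform in $n$, so $N(\varepsilon)$ is vacuous; it also handles the small-$d$ case more rigorously than the paper's brief "no mass points" remark, which as stated does not by itself deliver uniformity over $t$ and over the $n$-dependent distributions without an additional compactness or equicontinuity step. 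The paper's Berry--Esseen route, by contrast, connects directly to the Gaussian limit in Expression \ref{eqn:asym-Gauss} that is reused elsewhere, which is presumably why the author chose it.
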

				
				\begin{remark}\label{rem:anticon-V} 
					It is easy to see that from this lemma it follows that: For any $\varepsilon>0$, there exists a $N(\varepsilon)$ and $\gamma(\varepsilon)$ such that for all $\gamma \leq  \gamma(\varepsilon)$ and all $ n \geq N(\varepsilon)$:
					\begin{align}
					\boldsymbol{\Phi}_{n} \left(  ||\sqrt{n}\mathbb{V}_{n}||^{2}_{2} \geq t \right) \leq \varepsilon + \boldsymbol{\Phi}_{n}\left(  ||\sqrt{n}\mathbb{V}_{n}||^{2}_{2} \geq t + \sqrt{ tr\{ \Sigma_{n}^{2} \} }  \gamma  \right)
					\end{align}
					for all $t \geq 0$.
				\end{remark}
				
					\begin{proof}[Proof of Lemma \ref{lem:Zn-strong-weak}]

						\textbf{Part (i)} By definition of $\mathcal{P}_{t,\delta,h}$, for any $||x||^{2}_{2} \geq t+\delta$
						\begin{align*}
						\mathcal{P}_{t,\delta,h}(||x||^{2}_{2}) \geq & \int 1\{ z: ||x||^{2}_{2} + hz \geq  t  \}\phi(z)dz \geq \int 1\{ z: hz \geq  -\delta  \}\phi(z)dz \\
						 & = 1- \Phi(-\delta/h).
						\end{align*}
						Thus, for any $h \leq \frac{\delta}{\Phi^{-1}(\varepsilon)} \equiv h(\delta,\varepsilon)$, $\mathcal{P}_{t,\delta,h}(||x||^{2}_{2}) \geq (1-\varepsilon)1\{ ||x||^{2}_{2} \geq t+ \delta  \}$. Thus 	\begin{align*}
						E_{\mathbf{P}^{\ast}_{n}} \left[ 1\{ ||\sqrt{n} \mathbb{Z}^{\ast}_{n}||^{2}_{2} \geq t \}  | Z^{n} \right] \leq \frac{1}{1-\varepsilon} E_{\mathbf{P}^{\ast}_{n}} \left[ \mathcal{P}_{t-\delta,\delta,h}(||\sqrt{n} \mathbb{Z}^{\ast}_{n}||^{2}_{2}) |Z^{n}  \right]
						\end{align*}
						for any $h \leq  h(\delta,\varepsilon)$.\\
						
						\textbf{Part (ii)} Observe that for any $x : ||x||^{2}_{2} \leq t- 2\delta $,
						\begin{align*}
						\mathcal{P}_{t,\delta,h}(||x||^{2}_{2}) \leq \int 1\{ z : ||x||^{2}_{2} +hz) \geq t-\delta  \} \phi(z)dz \leq  \int 1\{ z :  hz \geq \delta  \} \phi(z)dz  .
						\end{align*}
						Thus $	\mathcal{P}_{t,\delta,h}(||x||^{2}_{2}) \leq  \varepsilon $ for any $x : ||x||^{2}_{2} \leq t- 2\delta $ and $h \leq  h(\delta,\varepsilon)$. Thus, for all $x \in \mathbb{R}^{d}$, $\mathcal{P}_{t,\delta,h}(||x||^{2}_{2})  \leq (1-\varepsilon)1\{ ||x||^{2}_{2} \geq t-2 \delta \} + \varepsilon $. The result follows by taken expectations at both sides. 
					\end{proof}

					\begin{proof}[Proof of Lemma \ref{lem:Vn-strong-weak}]
						The proof is identical to that of Lemma \ref{lem:Zn-strong-weak} and will be omitted.
					\end{proof}

					\begin{proof}[Proof of Lemma \ref{lem:ZZ-strong-weak}]
						The proof is identical to that of Lemma \ref{lem:Zn-strong-weak} and will be omitted.
					\end{proof}
					
					\begin{proof}[Proof of Lemma \ref{lem:anticoncentration-V}]
						Observe that $\xi_{n} \equiv \sqrt{n} \mathbb{V}_{n} \sim N(0,\Sigma_{n})$ (recall $\Sigma_{n} = E[ Z_{1,n}Z_{1,n}^{T}]$). Note that
						\begin{align*}
						\xi_{n}^{T} \xi_{n} = (\Sigma_{n}^{-1/2}\xi_{n})^{T}\Sigma_{n}(\Sigma_{n}^{-1/2}\xi_{n}) = & (U_{n}\Sigma_{n}^{-1/2}\xi_{n})^{T}\Lambda_{n}(U_{n}\Sigma_{n}^{-1/2}\xi_{n})\\
						\equiv & (\zeta_{n})^{T}\Lambda_{n}(\zeta_{n}) = \sum_{l=1}^{d(n)} \lambda_{l} \zeta_{l,n}^{2}
						\end{align*}
						where the third inequality follows from the diagonalization of $\Sigma_{n}$, where $\Lambda_{n}$ is a diagonal matrix of eigenvalues and $U_{n}$ is an unitary matrix. Observe that $\zeta_{n} = U_{n}\Sigma_{n}^{-1/2}\xi_{n} \sim N(0,I_{d(n)})$ and thus its components are iid standard Gaussian, so $\zeta_{l}^{2} \sim \chi^{2}_{1}$ and $\lambda_{l} \zeta_{l}^{2} \sim \Gamma(1/2,2\lambda_{l})$. Moreover, it is easy to see that
						\begin{align*}
						E[\lambda_{l} \zeta_{l,n}^{2}] = \lambda_{l}~and~Var(\lambda_{l} \zeta_{l,n}^{2}) = 2 \lambda^{2}_{l}
						\end{align*}
						which implies that $Var(\sum_{l=1}^{d(n)} \lambda_{l} \zeta^{2}_{l,n} ) = 2 tr\{ \Sigma^{2}_{n}  \}$. Also, $E[|\lambda_{l} \zeta_{l,n}^{2}|^{3}] = \lambda^{3}_{l} E[|\zeta_{l,n}|^{6}] \leq C \left(\lambda_{max}(\Sigma_{n}) \right)^{3} $ where $\lambda_{max}(A)$ is the largest eigen value of  a matrix $A$.
						
						If $d(n) \leq d < \infty$, the proof follows from the fact that $\Gamma(1/2,2\lambda_{l})$ does not have mass points and is straight forward to show that the statement holds for any $n$.
						
						Suppose that $d(n) \rightarrow \infty$ as $n \rightarrow \infty$. \footnote{The relevant cases for us are: (i) $d(n) \leq d <\infty$ or (ii) $d(n) \uparrow \infty$, that is why we implicitly assume the limit of $(d(n))_{n}$ exist.} Therefore,
						\begin{align*}
						& \sup_{t} \boldsymbol{\Phi}_{n} \left(   | ||\sqrt{n}\mathbb{V}_{n}||^{2}_{2} - t | \leq  \sqrt{ tr\{ \Sigma_{n}^{2} \} } \gamma   \right) \\
						=& \sup_{t} \boldsymbol{\Phi}_{n} \left(   | \frac{ ||\xi_{n}||^{2}_{2}}{\sqrt{ 2 tr\{ \Sigma_{n}^{2} \} }} - \frac{t}{\sqrt{2 tr\{ \Sigma_{n}^{2} \} }} | \leq   \gamma /  \sqrt{2}  \right)\\
						=&  \sup_{t'} \boldsymbol{\Phi}_{n} \left(   | \frac{ ||\xi_{n}||^{2}_{2}}{\sqrt{ 2 tr\{ \Sigma_{n}^{2} \} }} - t' | \leq   \gamma /  \sqrt{2}  \right)\\
						= & \sup_{t'} \boldsymbol{\Phi}_{n} \left(   | \frac{\sum_{l=1}^{d(n)} \lambda_{l} (\zeta_{l,n}^{2}-1)}{\sqrt{ 2 tr\{ \Sigma_{n}^{2} \} }} - t' + tr\{\Sigma_{n}\} | \leq   \gamma /  \sqrt{2}  \right)\\
						= & \sup_{t''} \boldsymbol{\Phi}_{n} \left(   | \frac{\sum_{l=1}^{d(n)} \lambda_{l} (\zeta_{l,n}^{2}-1)}{\sqrt{ 2 tr\{ \Sigma_{n}^{2} \} }} - t'' | \leq   \gamma /  \sqrt{2}  \right).
						\end{align*}
						
						Then, by Berry-Essen bound (Theorem 2, p. 544 feller \cite{FELLER_book}).
						\begin{align*}
						\sup_{t} \left| \boldsymbol{\Phi}_{n} \left(    \frac{\sum_{l=1}^{d(n)} \lambda_{l} (\zeta_{l,n}^{2}-1)}{\sqrt{ 2 tr\{ \Sigma_{n}^{2} \} }} \leq t'  \right) - \Phi(t') \right| \leq 6 C \frac{\sum_{l=1}^{d(n)} \lambda^{3}_{l} }{\left( 2 tr\{ \Sigma_{n}^{2} \} \right)^{3/2}}
						\end{align*}
						where $\Phi$ is the standard Gaussian cdf. Since  $\frac{\sum_{l=1}^{d(n)} \lambda^{3}_{l} }{\left( 2 tr\{ \Sigma_{n}^{2} \} \right)^{3/2}} = \frac{ tr\{ \Sigma_{n}^{3}  \} }{    \left( 2 tr\{ \Sigma_{n}^{2} \} \right)^{3/2} }$, by Assumption \ref{ass:data-Z}(i), for any $\varepsilon>0$, there exists a $N(\varepsilon)$ such that $\frac{tr\{ \Sigma_{n}^{3} \}}{\left(tr\{ \Sigma_{n}^{2} \} \right)^{3/2}} < 0.5 \varepsilon$ for all $n \geq N(\varepsilon)$. Thus,
						\begin{align*}
						\sup_{t \in \mathbb{R}}  \boldsymbol{\Phi}_{n} \left( | ||\xi_{n}||^{2}_{2} - t| \leq \sqrt{ tr\{ \Sigma_{n}^{2} \} } \gamma \right) = &\sup_{t \in \mathbb{R}}  \boldsymbol{\Phi}_{n} \left(\sqrt{ tr\{ \Sigma_{n}^{2} \} } \gamma - t \leq ||\xi_{n}||^{2}_{2}  \leq t + \sqrt{ tr\{ \Sigma_{n}^{2} \} } \gamma \right)\\
						\leq &  \sup_{t \in \mathbb{R}} \left|\Phi \left(t+ \gamma/\sqrt{2} \right) - \Phi \left(t-\gamma/\sqrt{2} \right) \right| + 0.5\varepsilon.
						\end{align*}
						
						Since for any $\varepsilon>0$, there exists a $\gamma(\varepsilon)$ such that $\left|\Phi \left(t+ \gamma/\sqrt{2} \right) - \Phi \left(t-\gamma/\sqrt{2} \right) \right| < 0.5 \varepsilon$, the desired result follows.
					\end{proof}

				\subsection{Proofs of Lemmas in Section \ref{sec:proof-boot}}
	
		\begin{proof}[Proof of Lemma \ref{lem:Delta-P-Pr}]
			The proof is analogous to that of Lemma \ref{lem:Delta-Pwz-Pr} and will not be repeated here.
		\end{proof}
	
		\begin{proof}[Proof of Lemma \ref{lem:Delta-Pwz-Pr}]
			Throughout the proof, let $\delta_{n} \equiv \sqrt{tr\{ \Sigma^{2}_{n} \}} \gamma(\varepsilon)$, where $\gamma(\varepsilon)$ as in Lemma \ref{lem:anticoncentration-V}. By remark \ref{rem:anticon-V} (applied thrice), 
			\begin{align}
			E_{\boldsymbol{\Phi}_{n}} \left[ 1\{ ||\sqrt{n}\mathbb{V}_{n}||^{2}_{2} \geq t  \}   \right] \geq 	E_{\boldsymbol{\Phi}_{n}} \left[ 1\{ ||\sqrt{n}\mathbb{V}_{n}||^{2}_{2} \geq t - 3 \delta_{n} \}   \right] - 3\varepsilon
			\end{align}
			for all $n \geq N(\varepsilon)$. By Lemma \ref{lem:Vn-strong-weak}(ii),
			\begin{align}
			E_{\boldsymbol{\Phi}_{n}} \left[ 1\{ ||\sqrt{n}\mathbb{V}_{n}||^{2}_{2} \geq t  \}   \right] \geq \frac{1}{1-\varepsilon} 	E_{\boldsymbol{\Phi}_{n}} \left[ \mathcal{P}_{t-\delta_{n},\delta_{n},h} ( ||\sqrt{n}\mathbb{V}_{n}||^{2}_{2} )   \right] - \frac{\varepsilon}{1-\varepsilon} - 3\varepsilon
			\end{align}
			for all $h \leq h(\varepsilon,\delta_{n})$ and all $n \geq N(\varepsilon)$. By Lemma \ref{lem:Zn-strong-weak}(i), for all $h \leq h(\varepsilon,\delta_{n})$
			\begin{align}
			E_{\mathbf{P}^{\ast}_{n}} \left[ 1\{ ||\sqrt{n} \mathbb{Z}^{\ast}_{n}||^{2}_{2} \geq t \}  |Z^{n} \right] \leq \frac{1}{1-\varepsilon} E_{\mathbf{P}^{\ast}_{n}} \left[ \mathcal{P}_{t-\delta_{n},\delta_{n},h}(||\sqrt{n} \mathbb{Z}^{\ast}_{n}||^{2}_{2}) |Z^{n} \right].
			\end{align}
			
			Hence, for all $h \leq h(\varepsilon,\delta_{n})$ and all $n \geq N(\varepsilon)$,
			\begin{align}\notag
			& E_{\mathbf{P}^{\ast}_{n}} \left[ 1\{ ||\sqrt{n} \mathbb{Z}^{\ast}_{n}||^{2}_{2} \geq t \}  |Z^{n} \right] - E_{\boldsymbol{\Phi}_{n}} \left[ 1\{ ||\sqrt{n}\mathbb{V}_{n}||^{2}_{2} \geq t \}   \right]\\ \notag
			\leq & \frac{1}{1-\varepsilon} \left(E_{\mathbf{P}^{\ast}_{n}} \left[ \mathcal{P}_{t-\delta_{n},\delta_{n},h}(||\sqrt{n} \mathbb{Z}^{\ast}_{n}||^{2}_{2}) |Z^{n} \right]  - E_{\boldsymbol{\Phi}_{n}} \left[ \mathcal{P}_{t-\delta_{n},\delta_{n},h} ( ||\sqrt{n}\mathbb{V}_{n}||^{2}_{2} )   \right] \right) \\
			&  + \frac{\varepsilon}{1-\varepsilon} + 3\varepsilon. \label{eqn:bound-1}
			\end{align}
			
			Similarly, by Lemma \ref{lem:Zn-strong-weak}(ii), for all $h \leq h(\varepsilon,\delta_{n})$
			\begin{align}
			E_{\mathbf{P}^{\ast}_{n}} \left[ 1\{ ||\sqrt{n} \mathbb{Z}^{\ast}_{n}||^{2}_{2} \geq t \}  |Z^{n} \right] \geq \frac{1}{1-\varepsilon} E_{\mathbf{P}^{\ast}_{n}} \left[ \mathcal{P}_{t+2\delta_{n},\delta_{n},h}(||\sqrt{n} \mathbb{Z}^{\ast}_{n}||^{2}_{2}) |Z^{n} \right] - \frac{\varepsilon}{1-\varepsilon}.
			\end{align}
			By Remark \ref{rem:anticon-V} (applied thrice), 
			\begin{align}
			E_{\boldsymbol{\Phi}_{n}} \left[ 1\{ ||\sqrt{n}\mathbb{V}_{n}||^{2}_{2} \geq t  \}   \right] \leq E_{\boldsymbol{\Phi}_{n}} \left[ 1\{ ||\sqrt{n}\mathbb{V}_{n}||^{2}_{2} \geq t + 3 \delta_{n} \}   \right] + 3\varepsilon
			\end{align}
			for all $n \geq N(\varepsilon)$. By Lemma \ref{lem:Vn-strong-weak}(ii),
			\begin{align}
			E_{\boldsymbol{\Phi}_{n}} \left[ 1\{ ||\sqrt{n}\mathbb{V}_{n}||^{2}_{2} \geq t  \}   \right] \leq \frac{1}{1-\varepsilon} 	E_{\boldsymbol{\Phi}_{n}} \left[ \mathcal{P}_{t+2\delta_{n},\delta_{n},h} ( ||\sqrt{n}\mathbb{V}_{n}||^{2}_{2} )   \right] + 3\varepsilon
			\end{align}
			for all $h \leq h(\varepsilon,\delta_{n})$ and all $n \geq N(\varepsilon)$.
			
			Hence,\begin{align}\notag
			& E_{\mathbf{P}^{\ast}_{n}} \left[ 1\{ ||\sqrt{n} \mathbb{Z}^{\ast}_{n}||^{2}_{2} \geq t \}  |Z^{n} \right] - E_{\boldsymbol{\Phi}_{n}} \left[ 1\{ ||\sqrt{n}\mathbb{V}_{n}||^{2}_{2} \geq t \}   \right]\\ \notag
			\geq & \frac{1}{1-\varepsilon} \left(E_{\mathbf{P}^{\ast}_{n}} \left[ \mathcal{P}_{t+2\delta_{n},\delta_{n},h}(||\sqrt{n} \mathbb{Z}^{\ast}_{n}||^{2}_{2}) |Z^{n} \right]  - E_{\boldsymbol{\Phi}_{n}} \left[ \mathcal{P}_{t+2\delta_{n},\delta_{n},h} ( ||\sqrt{n}\mathbb{V}_{n}||^{2}_{2} )   \right] \right) \\
			&  - \frac{\varepsilon}{1-\varepsilon} - 3\varepsilon. \label{eqn:bound-2}
			\end{align}
			
			By displays \ref{eqn:bound-1} and \ref{eqn:bound-2}, 
%
 in order to obtain the desired result it suffices to verify that $a \in \mathbb{R} \mapsto \mathcal{P}_{t,\delta,h}(a) \in \mathcal{C}_{h^{-1}}$. It is straight forward to check that $\mathcal{P}_{t,\delta,h} $ is three times continuously differentiable. Moreover, for any $a \in \mathbb{R}$,
			\begin{align*}
			|\partial \mathcal{P}_{t,\delta,h} (a)| \leq h^{-1}.
			\end{align*}
			
			To show this expression, observe that by the Dominated Convergence Theorem, for any $a \in \mathbb{R}$,
			\begin{align*}
			|\partial \mathcal{P}_{t,\delta,h} (a)| = & h^{-1} \left| \int p_{t,\delta}(u) (u-a) h^{-2} \phi((u - a)h^{-1})du  \right| \\
			= &  h^{-1}  \int \left| u-a   \right| h^{-2} \phi((u - a)h^{-1})du\\ 
			\leq & h^{-2} \sqrt{ \int \left| u-a  \right|^{2} h^{-1} \phi((u-a)h^{-1})du    }\\
			= & h^{-1}
			\end{align*}
			where the second line follows from the fact that $0 \leq p_{t,\delta}(u) \leq 1$. Similarly calculations yield
			\begin{align*}
			|\partial^{r} \mathcal{P}_{t,\delta,h} (a)| \leq h^{-r}
			\end{align*}
			which holds uniformly in $a \in \mathbb{R}$, $\delta$, and $t$. 
		\end{proof}

	\begin{proof}[Proof of Lemma \ref{lem:slep}]

		Establishing the result is analogous to establishing a bound for $\Delta_{h^{-1}}(\mathbf{Q}^{\ast}_{n}(\cdot| Z^{n} ),\mathbf{Q}_{n})$ where $\mathbf{Q}^{\ast}_{n}(\cdot| Z^{n} )$ is $N(0,\hat{\Sigma}_{n})$ and $\mathbf{Q}_{n}$ is $N(0,\Sigma_{n})$ . Let $\tilde{\xi}_{n} \sim \mathbf{Q}^{\ast}_{n}(\cdot|Z^{n})$ and $\xi_{n} \sim \mathbf{Q}_{n}$.

		For any $x \in \mathbb{R}^{d}$, let $f(x) \equiv g(||x||^{2}_{2})$. Observe that for any $g \in \mathcal{C}_{h^{-1}}$, $\partial_{i} f(x) = g'(||x||^{2}_{2}) 2 x_{i} $ and $\partial_{ij} f(x) = g''(||x||^{2}_{2}) 4 x_{i} x_{j} +  2 g'(||x||^{2}_{2})  1\{ i=j \} $. 
		
		By the Slepian interpolation  (\cite{Rollin2013} p. 4 --- there the construction itself is slightly different, using $\sqrt{t}$ instead of $\cos(t)$ ---),
		\begin{align*}
		E_{\mathbf{Q}^{\ast}_{n}(\cdot|Z^{n}) \cdot \mathbf{Q}_{n} } \left[ f\left( \tilde{\xi}_{n} \right) - f\left( \xi_{n} \right) \right] = \sum_{j=1}^{d(n)} \int_{0}^{\pi/2} E_{\mathbf{Q}^{\ast}_{n}(\cdot|Z^{n}) \cdot \mathbf{Q}_{n}} \left[ \partial_{j} f\left( \xi_{n}(t) \right) \dot{\xi}_{[j],n}(t) \right]  dt
		\end{align*}
		where $\xi_{n}(t) = \cos(t) \xi_{n} + \sin(t) \tilde{\xi}_{n}$ and $\dot{\xi}_{[j],n}(t)$ denotes the $j$-th coordinate of $\dot{\xi}_{n}(t)$ (the same holds for $\xi_{n}$, etc). Observe that $\dot{\xi}_{[j],n}(t) = - \sin(t) \xi_{[j],n} + \cos(t) \tilde{\xi}_{[j],n}$. Hence $(\dot{\xi}_{[j],n}(t),\xi_{n}(t))$ are jointly Gaussian with mean 0 a.s.-$\mathbf{P}_{n}$, for any $t$. Hence, by Stein's Identity (\cite{Stein1981} and \cite{CCK-AOS13} Lemma H.2),
		\begin{align*}
		& E_{\mathbf{Q}^{\ast}_{n}(\cdot|Z^{n}) \cdot \mathbf{Q}_{n}} \left[ \partial_{j} f\left( \xi_{n}(t) \right) \dot{\xi}_{[j],n}(t) \right]\\
		= & \sum_{l=1}^{d(k(n))} E_{\mathbf{Q}^{\ast}_{n}(\cdot|Z^{n}) \cdot \mathbf{Q}_{n}} \left[ \partial_{jl} f\left( \xi_{n}(t) \right) \right] E_{\mathbf{Q}^{\ast}_{n}(\cdot|Z^{n}) \cdot \mathbf{Q}_{n}} \left[ \xi_{[l],n}(t)  \dot{\xi}_{[j],n}(t) \right].
		\end{align*}
		It follows that
		\begin{align*}
		E \left[ \xi_{[l],n}(t)  \dot{\xi}_{[j],n}(t) \right] = 	E \left[ (\tilde{\xi}_{[l],n}\tilde{\xi}_{[j],n} - \xi_{[l],n}\xi_{[j],n}) \right] \sin(t)\cos(t).
		\end{align*}
		
		Therefore,
		\begin{align*}
		& E_{\mathbf{Q}^{\ast}_{n}(\cdot|Z^{n}) \cdot \mathbf{Q}_{n}} \left[ f\left( \tilde{\xi}_{n} \right) - f\left( \xi_{n} \right) \right] =  \sum_{j=1}^{d(n)}  \sum_{l=1}^{d(n)} E_{\mathbf{Q}^{\ast}_{n}(\cdot|Z^{n}) \cdot \mathbf{Q}_{n}} \left[ (\tilde{\xi}_{[l],n}\tilde{\xi}_{[j],n} - \xi_{[l],n}\xi_{[j],n}) \right] \\
		& \times \int_{0}^{\pi/2} E_{\mathbf{Q}^{\ast}_{n}(\cdot|Z^{n}) \cdot \mathbf{Q}_{n}} \left[ \partial_{jl} f\left( \xi_{n}(t) \right) \right]  \sin(t)\cos(t)dt \\
		=&   \sum_{j=1}^{d(n)}  \sum_{l=1}^{d(n)} \left\{ n^{-1} \sum_{i=1}^{n} Z_{[l],i,n}Z_{[j],i,n} - \Sigma_{[j,l],n}  \right\} \\
		& \times \int_{0}^{\pi/2} E_{\mathbf{Q}^{\ast}_{n}(\cdot|Z^{n}) \cdot \mathbf{Q}_{n}} \left[ \partial_{jl}
		f\left( \xi_{n}(t) \right) \right]  \sin(t)\cos(t)dt
		\end{align*}
		where the second line follows from the fact that $\tilde{\xi}_{n}
		\sim N(0, n^{-1} \sum_{i=1}^{n} Z_{i}Z_{i}^{T}) $, under $\mathbf{Q}^{\ast}_{n}(\cdot|Z^{n})$. 
		
		Therefore, 
		\begin{align*}
		& E_{\mathbf{Q}^{\ast}_{n}(\cdot|Z^{n}) \cdot \mathbf{Q}_{n}} \left[ f\left( \tilde{\xi}_{n} \right) - f\left( \xi_{n} \right) \right] \\
		\leq &   \max_{j,l} \left| n^{-1} \sum_{i=1}^{n} Z_{[l],i}Z_{[j],i} - \Sigma_{[j,l],n}  \right|\\
		& \times \sum_{j=1}^{d(n)}  \sum_{l=1}^{d(n)} \int_{0}^{\pi/2} E_{\mathbf{Q}^{\ast}_{n}(\cdot|Z^{n}) \cdot \mathbf{Q}_{n}} \left[ |\partial_{jl}
		f\left( \xi_{n}(t) \right)|  \right]  |\sin(t)\cos(t)|dt.
		\end{align*}	
		
		Observe that, by Cauchy-Schwarz inequality and the fact that $\partial_{ij} f(x) = g''(||x||^{2}_{2}) 4 x_{i} x_{j} +  2 g'(||x||^{2}_{2})  1\{ i=j \} $
		\begin{align*}
		\sum_{j=1}^{d(n)}  \sum_{l=1}^{d(n)} E_{\mathbf{Q}^{\ast}_{n}(\cdot|Z^{n}) \cdot \mathbf{Q}_{n}} \left[ |\partial_{jl}
		f\left( \xi_{n}(t) \right)| \right] \leq & 4 h^{-2} \sum_{j=1}^{d(n)}  \sum_{l=1}^{d(n)} E_{\mathbf{Q}^{\ast}_{n}(\cdot|Z^{n}) \cdot \mathbf{Q}_{n}} \left[ |\xi_{[j],n}(t) | |\xi_{[l],n}(t) |  \right]\\
		&  + 2 h^{-1} d(n) \\
		\leq & 4 h^{-2} \left( \sum_{j=1}^{d(n)}  \sqrt{ E_{\mathbf{Q}^{\ast}_{n}(\cdot|Z^{n}) \cdot \mathbf{Q}_{n}} \left[ |\xi_{[j],n}(t) |^{2} \right] } \right)^{2} \\
		& + 2 h^{-1} d(n) \\
		\leq & 4 h^{-2} d(n)   E_{\mathbf{Q}^{\ast}_{n}(\cdot|Z^{n}) \cdot \mathbf{Q}_{n}} \left[ ||\xi_{n}(t) ||^{2}_{2} \right]   + 2 h^{-1} d(n) .
		\end{align*} 
		
		Therefore, since $||\xi_{n}(t)||^{2}_{2} \precsim \{ ||\xi_{n}||^{2}_{2} + ||\tilde{\xi}_{n}||^{2}_{2} \}$,
		\begin{align*}
		\sum_{j=1}^{d(n)}  \sum_{l=1}^{d(n)} E_{\mathbf{Q}^{\ast}_{n}(\cdot|Z^{n}) \cdot \mathbf{Q}_{n}} \left[ |\partial_{jl}
		f\left( \xi_{n}(t) \right)|  \right] \precsim & d(n) h^{-1}  \{ h^{-1} E_{\mathbf{Q}^{\ast}_{n}(\cdot|Z^{n}) \cdot \mathbf{Q}_{n}} \left[ ||\xi_{n}||^{2}_{2} + ||\tilde{\xi}_{n}||^{2}_{2}  \right] + 2 \} \\
		= & d(n) h^{-1}  \{ h^{-1} \left( tr\{ \Sigma_{n} \} + tr\{ \hat{\Sigma}_{n} \} \right) + 2 \}.
		\end{align*}
		
		The desired result from the fact that $\int_{0}^{\pi/2} |\sin(t)\cos(t)|dt < \infty$.
	\end{proof}
	
	\section{Proofs for Section \ref{sec:MST}}
	\label{app:appl}
	
	We first introduce some notation and lemmas needed in the proofs of the results in Section \ref{sec:MST} (the proofs of these lemmas are relegated to the end of the section). Let $\hat{g}^{\ast}_{n} \equiv n^{-1}\sum_{i=1}^{n} \omega_{in} g(X_{i},\hat{\theta}^{\ast}_{GMM,n})$ and $\bar{g}^{\ast}_{n} \equiv n^{-1}\sum_{i=1}^{n} \omega_{in} g(X_{i},\theta_{0})$. Let $\bar{G}^{\ast}_{n}(\theta) = n^{-1} \sum_{i=1}^{n} \omega_{in} \nabla_{\theta} g(X_{i}, \theta) \in \mathbb{R}^{d(n) \times q}$.
	
	\begin{lemma}\label{lem:GMM-1}
		Suppose Assumption \ref{ass:example-1}(ii)(iii)(iv) holds. Then:\\
		
		(1) $\sqrt{n} || \bar{g}^{\ast}_{n} ||_{2} = O_{\mathbf{P}^{\ast}_{n}(\cdot|Z^{n})}(\sqrt{n^{-1}\sum_{i=1}^{n} ||g(X_{i},\theta_{0})||^{2}_{2} })$.\\

		(2) Uniformly over $\theta \in \{ \theta \in \Theta : ||\theta - \theta_{0}||_{2} \precsim \Delta_{n} \}$ with $\Delta_{n} = o(1)$,\begin{align*}
			||\bar{G}^{\ast}_{n}(\theta)||_{2} =O_{\mathbf{P}^{\ast}_{n}(\cdot|Z^{n})}(\sqrt{d(n)} (n^{-1/2} + \Delta_{n})),~wpa1-\mathbf{P}.
		\end{align*}
	\end{lemma}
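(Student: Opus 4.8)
\textbf{Proof proposal for Lemma \ref{lem:GMM-1}.} The plan is to handle the two parts by elementary second-moment bounds under the bootstrap law $\mathbf{P}^{\ast}_{n}(\cdot\mid Z^{n})$, combined where needed with a law-of-large-numbers control of the resulting data-dependent envelopes under $\mathbf{P}$. For part (1), observe that conditionally on $Z^{n}$ the weights $(\omega_{in})_{i\le n}$ are independent with mean zero and unit variance, so the cross terms cancel and
\[
E_{\mathbf{P}^{\ast}_{n}}\left[\left\| n^{-1/2}\sum_{i=1}^{n}\omega_{in}g(X_{i},\theta_{0})\right\|^{2}_{2}\,\Bigg|\,Z^{n}\right]= n^{-1}\sum_{i=1}^{n}\|g(X_{i},\theta_{0})\|^{2}_{2}.
\]
A conditional Markov inequality, which holds uniformly in $Z^{n}$, then yields $\sqrt{n}\,\|\bar g^{\ast}_{n}\|_{2}=O_{\mathbf{P}^{\ast}_{n}(\cdot|Z^{n})}\!\left(\sqrt{n^{-1}\sum_{i=1}^{n}\|g(X_{i},\theta_{0})\|^{2}_{2}}\right)$, and no appeal to $\mathbf{P}$ or to Assumption \ref{ass:example-1} is needed.

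For part (2), fix $\theta$ in the ball $\{\theta:\|\theta-\theta_{0}\|_{2}\precsim\Delta_{n}\}$, which lies in $\mathcal{N}$ for $n$ large since $\Delta_{n}=o(1)$, and decompose
\[
\bar G^{\ast}_{n}(\theta)= n^{-1}\sum_{i=1}^{n}\omega_{in}\nabla_{\theta}g(X_{i},\theta_{0})+ n^{-1}\sum_{i=1}^{n}\omega_{in}\left(\nabla_{\theta}g(X_{i},\theta)-\nabla_{\theta}g(X_{i},\theta_{0})\right).
\]
The first term is $\theta$-free; bounding its operator norm by its Frobenius norm (comparable up to the fixed constant $q=\dim\Theta$), its conditional second moment is $n^{-2}\sum_{i}\|\nabla_{\theta}g(X_{i},\theta_{0})\|^{2}_{F}\precsim n^{-2}\sum_{i}\|\nabla_{\theta}g(X_{i},\theta_{0})\|^{2}_{2}$. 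Since $E_{\mathbf{P}}\|\nabla_{\theta}g(X,\theta_{0})\|^{2}_{2}\precsim d(n)$ by Assumption \ref{ass:example-1}(iii) (Jensen from $\beta\ge1$), Markov under $\mathbf{P}$ gives $n^{-1}\sum_{i}\|\nabla_{\theta}g(X_{i},\theta_{0})\|^{2}_{2}=O_{\mathbf{P}}(d(n))$, and a further conditional-Markov step yields this term is $O_{\mathbf{P}^{\ast}_{n}(\cdot|Z^{n})}(\sqrt{d(n)/n})$ wpa1-$\mathbf{P}$. For the second term, Assumption \ref{ass:example-1}(iv) provides the envelope bound $\|\nabla_{\theta}g(X_{i},\theta)-\nabla_{\theta}g(X_{i},\theta_{0})\|_{2}\precsim\delta_{n}(X_{i})\Delta_{n}$ valid simultaneously for all $\theta$ in the ball, so
\[
\sup_{\theta}\left\| n^{-1}\sum_{i=1}^{n}\omega_{in}\left(\nabla_{\theta}g(X_{i},\theta)-\nabla_{\theta}g(X_{i},\theta_{0})\right)\right\|_{2}\precsim \Delta_{n}\, n^{-1}\sum_{i=1}^{n}|\omega_{in}|\,\delta_{n}(X_{i}).
\]
Taking conditional expectation and using $E_{\mathbf{P}^{\ast}_{n}}[|\omega_{in}|\mid Z^{n}]\le 1$, then $E_{\mathbf{P}}[\delta_{n}(X_{1})]\le\sqrt{E_{\mathbf{P}}[\delta_{n}(X_{1})^{2}]}\precsim\sqrt{d(n)}$ from Assumption \ref{ass:example-1}(iv), the same two-step argument shows this term is $O_{\mathbf{P}^{\ast}_{n}(\cdot|Z^{n})}(\Delta_{n}\sqrt{d(n)})$ wpa1-$\mathbf{P}$. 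Adding the two bounds and taking the supremum over $\theta$ gives $\sup_{\theta}\|\bar G^{\ast}_{n}(\theta)\|_{2}=O_{\mathbf{P}^{\ast}_{n}(\cdot|Z^{n})}\!\left(\sqrt{d(n)}\,(n^{-1/2}+\Delta_{n})\right)$ wpa1-$\mathbf{P}$, as claimed.

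The only genuinely delicate point is the bookkeeping between the conditional and unconditional statements: I would isolate once and for all the elementary fact that if $E_{\mathbf{P}^{\ast}_{n}}[|T_{n}|^{2}\mid Z^{n}]\le C_{n}$ with $C_{n}=O_{\mathbf{P}}(a_{n})$, then $T_{n}=O_{\mathbf{P}^{\ast}_{n}(\cdot|Z^{n})}(\sqrt{a_{n}})$ wpa1-$\mathbf{P}$, proved by first making $\{C_{n}\le Ka_{n}\}$ have $\mathbf{P}$-probability close to one and then applying conditional Markov on that event. The uniformity over $\theta$, which at first sight looks like the hard part, is essentially free because Assumption \ref{ass:example-1}(iv) converts the $\theta$-variation of $\nabla_{\theta}g$ into the single $\theta$-independent envelope $\delta_{n}$; the only routine verifications are the norm comparisons (operator versus Frobenius, with $q$ fixed) and confirming the ball eventually sits inside $\mathcal{N}$.
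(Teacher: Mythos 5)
Your proposal is correct and follows essentially the same route as the paper: the same conditional second-moment computation plus Markov for part (1), the same decomposition of $\bar G^{\ast}_{n}(\theta)$ into a $\theta$-free term at $\theta_{0}$ and a Lipschitz remainder controlled by the envelope $\delta_{n}(X_{i})\Delta_{n}$ from Assumption \ref{ass:example-1}(iv), and the same conditional-then-unconditional bookkeeping (the paper isolates this as Lemma \ref{lem:prb-pr}, exactly as you suggest). The only cosmetic difference is that for the remainder term the paper invokes the uniform boundedness of the weights imposed in Section \ref{sec:MST}, whereas you use $E_{\mathbf{P}^{\ast}_{n}}[|\omega_{in}|\mid Z^{n}]\leq 1$, which works equally well.
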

	
	Let 
	\begin{align*}
	R^{\ast}_{n}(\theta,\lambda) \equiv \lambda^{T} \left(  n^{-1}\sum_{i=1}^{n} \int_{0}^{1} s_{2}(t\lambda^{T} \omega_{in}g(X_{i},\theta) ) dt\omega^{2}_{in}g(X_{i},\theta)g(X_{i},\theta)^{T}  - s_{2}(0)\Omega \right) \lambda.
	\end{align*}
	
			\begin{lemma}\label{lem:GEL-1}
				Suppose Assumption \ref{ass:example-1}(i)(ii)(iii) holds and $d(n)^{4}/n = o(1)$. Then: \\
				
				(1) For all $\theta \in \mathcal{N}$, $\{ \lambda : || \lambda||_{2} \precsim  \sqrt{d(n)/n}  \} \subseteq  \Lambda(\theta)$.\footnote{The set $\mathcal{N}$ is the one in Assumption \ref{ass:example-1}.}\\
				
				(2)  Uniformly over $\lambda \in \{ \lambda \in \Lambda(\hat{\theta}^{\ast}_{GEL,n}) :  ||\lambda||_{2} \precsim \sqrt{d(n)/n}  \}$ and $||\theta - \theta_{0}||_{2} \precsim \Delta_{n}$ with $\Delta_{n} = o(1)$,
				\begin{align*}
				n R^{\ast}_{n}(\theta,\lambda) =  O_{\mathbf{P}^{\ast}_{n}(\cdot|Z^{n})} \left( \sqrt{d(n)} ( o(1) +  d(n)^{3/2} \Delta_{n} )    \right),~wpa1-\mathbf{P}.
				\end{align*}
				
			\end{lemma}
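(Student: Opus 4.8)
The plan is to prove Lemma \ref{lem:GEL-1} in two steps, mirroring the structure of the statement.

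\textbf{Part (1).} First I would show that the ball $\{\lambda : \|\lambda\|_2 \precsim \sqrt{d(n)/n}\}$ is contained in $\Lambda(\theta)$ for every $\theta \in \mathcal{N}$, i.e. that $\lambda^T g(X,\theta) \in \mathcal{V}$ a.s.-$\mathbf{P}$ whenever $\|\lambda\|_2$ is of that order. Since $\mathcal{V}$ contains a neighborhood of $0$, it suffices to show $|\lambda^T g(X,\theta)|$ is small. By Cauchy-Schwarz, $|\lambda^T g(X,\theta)| \le \|\lambda\|_2 \|g(X,\theta)\|_2 \precsim \sqrt{d(n)/n}\,\|g(X,\theta)\|_2$. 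The subtlety is that this must hold a.s., not merely in expectation; the standard device (used in DIN and references therein) is to restrict attention to the event on which $\max_{i\le n}\sup_{\theta\in\mathcal N}\|g(X_i,\theta)\|_2 = o(\sqrt{n/d(n)})$, which holds wpa1-$\mathbf{P}$ under Assumption \ref{ass:example-1}(i) together with $d(n)^4/n = o(1)$ by a union bound / Markov argument on the maximum of $n$ terms with bounded $2(2+\gamma)$-th moments of order $d(n)^{2+\gamma}$. On that event the product is $o(1)$ uniformly, so $\lambda^T g(X_i,\theta)$ lands in $\mathcal{V}$ for all $i$; since the bootstrap weights do not enter here, the same reasoning applies with $\omega_{in}g(X_i,\theta)$ once we also use boundedness (or the moment bound) of the weights.

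\textbf{Part (2).} Next I would bound $nR^\ast_n(\theta,\lambda)$ uniformly over the indicated region. Write
\begin{align*}
 nR^\ast_n(\theta,\lambda) = \lambda^T\Big( \sum_{i=1}^n \int_0^1 s_2(t\lambda^T\omega_{in}g(X_i,\theta))dt\, \omega_{in}^2 g(X_i,\theta)g(X_i,\theta)^T - n\,s_2(0)\Omega \Big)\lambda.
\end{align*}
I would split this into three pieces: (a) replacing $s_2(\cdot)$ inside the integral by $s_2(0)$, incurring an error controlled by the Lipschitz property of $s_2$ (equivalently $s''$) times $|t\lambda^T\omega_{in}g(X_i,\theta)| \precsim \|\lambda\|_2\|g\|_2$, which on the good event is $o(1)$; (b) the term $s_2(0)\lambda^T(\sum_i \omega_{in}^2 g(X_i,\theta)g(X_i,\theta)^T - \sum_i g(X_i,\theta)g(X_i,\theta)^T)\lambda$, where the weights are centered at their second moment $E[\omega_{in}^2]=1$, handled by a second-moment (variance) bound using independence of the weights; and (c) the ``deterministic-weights'' remainder $s_2(0)\lambda^T(\sum_i g(X_i,\theta)g(X_i,\theta)^T - n\Omega)\lambda$, combined with the $\theta$-to-$\theta_0$ expansion of $g(X_i,\theta)g(X_i,\theta)^T$ via Assumption \ref{ass:example-1}(iv) (the Lipschitz bound on $\nabla_\theta g$) and (iii). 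In each case the quadratic form is bounded by $\|\lambda\|_2^2$ times an operator-norm-type bound on the matrix in the middle; using $\|\lambda\|_2^2 \precsim d(n)/n$ and the moment bounds ($E\|g\|_2^{2(2+\gamma)} \precsim d(n)^{2+\gamma}$ implying $E\|g\|_2^4 \precsim d(n)^2$, $E\|\nabla_\theta g\|_2^{2\beta}\precsim d(n)^\beta$, $E[\delta_n^2]\precsim d(n)$), together with $\|\theta-\theta_0\|_2 \precsim \Delta_n$, one assembles a bound of the form $O_{\mathbf{P}^\ast_n(\cdot|Z^n)}(\sqrt{d(n)}(o(1) + d(n)^{3/2}\Delta_n))$ wpa1-$\mathbf{P}$. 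The bookkeeping of which moment condition feeds which term, and verifying that $d(n)^4/n=o(1)$ suffices to make the $o(1)$ pieces genuinely vanish, is the routine-but-delicate part.

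\textbf{Main obstacle.} The hard part will be controlling term (c) — the interaction between the sampling fluctuation of $n^{-1}\sum_i g(X_i,\theta_0)g(X_i,\theta_0)^T - \Omega$ and the Taylor expansion in $\theta$ around $\theta_0$ — at the right rate in $d(n)$, since naive operator-norm bounds on $d(n)\times d(n)$ sample covariance matrices lose factors of $d(n)$; one must instead exploit that the object is always hit on both sides by the \emph{fixed} direction $\lambda$ with $\|\lambda\|_2^2 \precsim d(n)/n$, turning matrix concentration into a scalar variance computation. Ensuring the error is genuinely $o(\sqrt{d(n)})$ rather than $O(\sqrt{d(n)})$, and that the $\Delta_n$-dependent piece carries exactly the $d(n)^{3/2}$ prefactor claimed (tracing through Assumption \ref{ass:example-1}(iii)-(iv) and the Cauchy-Schwarz steps), is where the care is needed; the weight-dependent pieces (a)-(b) are comparatively straightforward given Assumption \ref{ass:boot-w} and the uniform boundedness of the weights imposed in this section.
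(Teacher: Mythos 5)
Your proposal follows essentially the same route as the paper: part (1) via Cauchy--Schwarz plus a union-bound/Markov control of $\max_{i\le n}\sup_{\theta\in\mathcal N}\|g(X_i,\theta)\|_2$ under Assumption \ref{ass:example-1}(i) and $d(n)^4/n=o(1)$, and part (2) via the same decomposition (Lipschitz-$s_2$ remainder, weight-fluctuation term, LLN term, and $\theta$-to-$\theta_0$ expansion), each bounded by $\|\lambda\|_2^2$ times a matrix-norm/variance computation, with the $\theta$-expansion producing the $d(n)^{3/2}\Delta_n$ factor and the remaining pieces being $o(1)$ under the stated rate. One small correction: the $\theta$-expansion needs only Assumption \ref{ass:example-1}(ii)--(iii) (mean value theorem plus the moment bound on $\nabla_\theta g$), not (iv), which is not among the lemma's hypotheses.
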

			
	The following lemma is a general result that provides a relationship between $O_{\mathbf{P}^{\ast}_{n}(\cdot|Z^{n})}$ (and $o_{\mathbf{P}^{\ast}_{n}(\cdot|Z^{n})}$) and $O_{\mathbf{P}}$ variables that we use throughout.
	
	\begin{lemma}\label{lem:prb-pr}
	 Let $(W_{i})_{i}$ and $(X_{i})_{i}$ be sequences of random variables such that $W_{n}$ is $(\omega_{in},Z_{i})_{i \leq n}$ measurable and $X_{n}$ is $(Z_{i})_{i\leq n}$ measurable and $X_{n} \ne 0 $ a.s.-$\mathbf{P}$. Let $(c_{n})_{n}$ be a sequence of positive real numbers. Then:
	 
	 \smallskip
	 
	 (1) If $W_{n} = O_{\mathbf{P}^{\ast}_{n}(\cdot|Z^{n})}(|X_{n}|)$ and $X_{n} = O_{\mathbf{P}}(c_{n})$, then $W_{n} = O_{\mathbf{P}^{\ast}_{n}(\cdot|Z^{n})}(c_{n})$ wpa1-$\mathbf{P}$.
	 
	 \smallskip
	 
	 (2) If $W_{n} = O_{\mathbf{P}^{\ast}_{n}(\cdot|Z^{n})}(|X_{n}|)$ and $X_{n} = o_{\mathbf{P}}(c_{n})$, then $W_{n} = o_{\mathbf{P}^{\ast}_{n}(\cdot|Z^{n})}(c_{n})$  wpa1-$\mathbf{P}$.
	 
	\end{lemma}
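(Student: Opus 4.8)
The plan is pure definition-chasing: unwind the meaning of $O_{\mathbf{P}^{\ast}_{n}(\cdot|Z^{n})}$, $O_{\mathbf{P}}$ and $o_{\mathbf{P}}$, and split the relevant probability over the $Z^{n}$-measurable event on which $|X_{n}|$ is controlled by a multiple of $c_{n}$. Reading the statements in the paper's convention, $W_{n}=O_{\mathbf{P}^{\ast}_{n}(\cdot|Z^{n})}(|X_{n}|)$ means that for each $\varepsilon>0$ there is $M_{\varepsilon}<\infty$ such that $Z^{n}\mapsto\mathbf{P}^{\ast}_{n}\big(|W_{n}|>M_{\varepsilon}|X_{n}|\mid Z^{n}\big)$ is $o_{\mathbf{P}_{n}}(1)$; $X_{n}=O_{\mathbf{P}}(c_{n})$ means that for each $\varepsilon>0$ there is $K_{\varepsilon}<\infty$ with $\mathbf{P}_{n}(|X_{n}|>K_{\varepsilon}c_{n})<\varepsilon$ for $n$ large; and $X_{n}=o_{\mathbf{P}}(c_{n})$ means $\mathbf{P}_{n}(|X_{n}|>\eta c_{n})\to0$ for every $\eta>0$. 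The one elementary fact used repeatedly is that, because $\{|X_{n}|\leq Kc_{n}\}$ is $Z^{n}$-measurable, on that event one has $\{|W_{n}|>MKc_{n}\}\subseteq\{|W_{n}|>M|X_{n}|\}$ and hence $\mathbf{P}^{\ast}_{n}(|W_{n}|>MKc_{n}\mid Z^{n})\leq\mathbf{P}^{\ast}_{n}(|W_{n}|>M|X_{n}|\mid Z^{n})$ pointwise in $Z^{n}$.

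For part (1), fix $\varepsilon>0$, take $M_{\varepsilon}$ from the first hypothesis and $K_{\varepsilon}$ from the second, and put $M'\equiv M_{\varepsilon}K_{\varepsilon}$. On the complement of $\{|X_{n}|>K_{\varepsilon}c_{n}\}$ the domination above gives $\mathbf{P}^{\ast}_{n}(|W_{n}|>M'c_{n}\mid Z^{n})\leq\mathbf{P}^{\ast}_{n}(|W_{n}|>M_{\varepsilon}|X_{n}|\mid Z^{n})$, so
\begin{align*}
\mathbf{P}_{n}\Big(\mathbf{P}^{\ast}_{n}(|W_{n}|>M'c_{n}\mid Z^{n})>\varepsilon\Big)\;\leq\;\mathbf{P}_{n}\Big(\mathbf{P}^{\ast}_{n}(|W_{n}|>M_{\varepsilon}|X_{n}|\mid Z^{n})>\varepsilon\Big)\;+\;\mathbf{P}_{n}\big(|X_{n}|>K_{\varepsilon}c_{n}\big),
\end{align*}
and both terms on the right are $<\varepsilon$ for $n$ large by the two hypotheses (shrinking $\varepsilon$ to $\varepsilon/2$ at the outset absorbs the factor $2$). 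As $\varepsilon>0$ was arbitrary, this is exactly $W_{n}=O_{\mathbf{P}^{\ast}_{n}(\cdot|Z^{n})}(c_{n})$ wpa$1$-$\mathbf{P}$.

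For part (2), fix $\eta>0$ and $\varepsilon>0$, take $M_{\varepsilon}$ from the first hypothesis, and note that $\eta/M_{\varepsilon}$ is a fixed positive number, so $\mathbf{P}_{n}(|X_{n}|>(\eta/M_{\varepsilon})c_{n})\to0$ by $X_{n}=o_{\mathbf{P}}(c_{n})$. The same splitting, now on the complement of $\{|X_{n}|>(\eta/M_{\varepsilon})c_{n}\}$, yields
\begin{align*}
\mathbf{P}_{n}\Big(\mathbf{P}^{\ast}_{n}(|W_{n}|>\eta c_{n}\mid Z^{n})>\varepsilon\Big)\;\leq\;\mathbf{P}_{n}\Big(\mathbf{P}^{\ast}_{n}(|W_{n}|>M_{\varepsilon}|X_{n}|\mid Z^{n})>\varepsilon\Big)\;+\;\mathbf{P}_{n}\big(|X_{n}|>(\eta/M_{\varepsilon})c_{n}\big)\;\longrightarrow\;0,
\end{align*}
so $W_{n}=o_{\mathbf{P}^{\ast}_{n}(\cdot|Z^{n})}(c_{n})$ wpa$1$-$\mathbf{P}$ since $\eta,\varepsilon$ were arbitrary. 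There is no genuine obstacle here; the only points requiring care are the quantifier order — the constant $M_{\varepsilon}$ produced by the $O_{\mathbf{P}^{\ast}}$-hypothesis depends on the probability tolerance and not on the rate, which is precisely what lets $\eta/M_{\varepsilon}$ (resp. $M_{\varepsilon}K_{\varepsilon}$) be fed back into the $o_{\mathbf{P}}$ (resp. $O_{\mathbf{P}}$) hypothesis — and the $Z^{n}$-measurability of $\{|X_{n}|\leq Kc_{n}\}$, which makes truncation on that event interchangeable with the conditional probability $\mathbf{P}^{\ast}_{n}(\cdot\mid Z^{n})$. The assumption $X_{n}\neq0$ a.s.-$\mathbf{P}$ only serves to make the input statement $W_{n}=O_{\mathbf{P}^{\ast}_{n}(\cdot|Z^{n})}(|X_{n}|)$ non-degenerate and is not used elsewhere.
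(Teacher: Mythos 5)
Your proof is correct and follows essentially the same route as the paper's: split over the $Z^{n}$-measurable event $\{|X_{n}|\leq Kc_{n}\}$, use the pointwise set inclusion to dominate the conditional probability, and apply a union bound, with the only cosmetic difference being that the paper factors its constant as $\sqrt{M}\cdot\sqrt{M}$ while you keep two separate constants $M_{\varepsilon}$ and $K_{\varepsilon}$. The paper omits part (2) as "analogous," and your explicit treatment of it (feeding $\eta/M_{\varepsilon}$ back into the $o_{\mathbf{P}}$ hypothesis) is exactly the intended argument.
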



	\begin{proof}[Proof of Lemma \ref{lem:quad-approx}]
		The proof for $\hat{T}_{GMM,n}$ is in Lemma 6.1 in DIN and also analogous to that of $\hat{T}^{\ast}_{GMM,n}$, so it will be omitted. 
		
		\medskip
		
		We now establish the result for $\hat{T}^{\ast}_{GMM,n}$. It follows that $n |(\bar{g}^{\ast}_{n})^{T} \hat{W}_{n} \bar{g}^{\ast}_{n} -  (\bar{g}^{\ast}_{n})^{T} W_{n} \bar{g}^{\ast}_{n}| \leq ||\hat{W}_{n} - W_{n}||_{2} \times || \sqrt{n} \bar{g}^{\ast}_{n}||^{2}_{2} $. By Lemma \ref{lem:GMM-1}(1),
		\begin{align*}
		n |(\bar{g}^{\ast}_{n})^{T} \hat{W}_{n} \bar{g}^{\ast}_{n} -  (\bar{g}^{\ast}_{n})^{T} W_{n} \bar{g}^{\ast}_{n}| = O_{\mathbf{P}^{\ast}_{n}(\cdot|Z^{n})} \left(  n^{-1}\sum_{i=1}^{n} ||g(X_{i},\theta_{0})||^{2}_{2} ||\hat{W}_{n} - W_{n}||_{2} \right).
		\end{align*}
		Under Assumption \ref{ass:example-W} and since $E_{\mathbf{P}}[n^{-1}\sum_{i=1}^{n} ||g(X_{i},\theta_{0})||^{2}_{2}] = tr\{ \Omega \} = O(d(n))$, it follows by the Markov inequality that $n^{-1}\sum_{i=1}^{n} ||g(X_{i},\theta_{0})||^{2}_{2} ||\hat{W}_{n} - W_{n}||_{2} = o_{\mathbf{P}}(\sqrt{d(n)})$. Thus, by Lemma \ref{lem:prb-pr},  $n |(\bar{g}^{\ast}_{n})^{T} \hat{W}_{n} \bar{g}^{\ast}_{n} -  (\bar{g}^{\ast}_{n})^{T} W_{n} \bar{g}^{\ast}_{n}| = o_{\mathbf{P}^{\ast}_{n}(\cdot|Z^{n})}(\sqrt{d(n)})$ wpa1-$\mathbf{P}$.

		Given this, it suffices to show that $n |(\hat{g}^{\ast}_{n})^{T} \hat{W}_{n} \hat{g}^{\ast}_{n} -  (\bar{g}^{\ast}_{n})^{T} \hat{W}_{n} \bar{g}^{\ast}_{n}|= o_{\mathbf{P}^{\ast}_{n}(\cdot|Z^{n})}(\sqrt{d(n)})$ wpa1-$\mathbf{P}$. Note that
		\begin{align*}
		 n |(\hat{g}^{\ast}_{n})^{T} \hat{W}_{n} \hat{g}^{\ast}_{n} -  (\bar{g}^{\ast}_{n})^{T} \hat{W}_{n} \bar{g}^{\ast}_{n}| \leq &  2 n |(\hat{g}^{\ast}_{n} - \bar{g}^{\ast}_{n})^{T} \hat{W}_{n} \bar{g}^{\ast}_{n} | + n| (\hat{g}^{\ast}_{n} - \bar{g}^{\ast}_{n})^{T} \hat{W}_{n} (\hat{g}^{\ast}_{n} - \bar{g}^{\ast}_{n})|\\
		= & 2 n |(\hat{\theta}^{\ast}_{GMM,n}-\theta_{0})^{T}(\Gamma^{\ast}_{n})^{T} \hat{W}_{n} \bar{g}^{\ast}_{n} | \\
		& + n| (\hat{\theta}^{\ast}_{GMM,n}-\theta_{0})^{T}( \Gamma^{\ast}_{n} )^{T} \hat{W}_{n} (\Gamma^{\ast}_{n}) (\hat{\theta}^{\ast}_{GMM,n}-\theta_{0}) | \\
		\equiv & T^{\ast}_{1,n} + T^{\ast}_{2,n}
		\end{align*}
		where the second line follows Assumption \ref{ass:example-1}(i) and the mean value Theorem; here $\Gamma^{\ast}_{n} \equiv \int_{0}^{1} \bar{G}^{\ast}_{n}(\hat{\theta}^{\ast}_{n}(t))dt$ with $\hat{\theta}^{\ast}_{n}(t) \equiv \theta_{0} + t (\hat{\theta}^{\ast}_{GMM,n}-\theta_{0})  $. The desired result follows by establishing that $T^{\ast}_{i,n} = o_{\mathbf{P}^{\ast}_{n}(\cdot|Z^{n})}(\sqrt{d(n)})$ wpa1-$\mathbf{P}$ for $i=1,2$. We do this next.
		
		We note that, 
		\begin{align*}
		T^{\ast}_{1,n} \precsim \sqrt{n} ||\hat{\theta}^{\ast}_{GMM,n}-\theta_{0}||_{2} || (\Gamma^{\ast}_{n})^{T} \hat{W}_{n} \sqrt{n} \bar{g}^{\ast}_{n} ||_{2}
		\end{align*}
		wpa1-$\mathbf{P}$. 
		
		By assumption $\sqrt{n}||\hat{\theta}^{\ast}_{GMM,n}-\theta_{0}||_{2} = O_{\mathbf{P}^{\ast}_{n}(\cdot|Z^{n})}(\sqrt{d(n)})$ wpa1-$\mathbf{P}$. Moreover, under Assumption \ref{ass:example-W}, $\lambda_{max}(\hat{W}_{n}) \leq C$ wpa1-$\mathbf{P}$ and thus \\ $|| (\Gamma^{\ast}_{n})^{T} \hat{W}_{n} \sqrt{n} \bar{g}^{\ast}_{n} ||_{2} \precsim || \Gamma^{\ast}_{n}||_{2} ||\sqrt{n} \bar{g}^{\ast}_{n} ||_{2}$. We can apply Lemma \ref{lem:GMM-1}(2) with $\Delta_{n} = n^{-1/2} \sqrt{d(n)} $ and obtain $|| \Gamma^{\ast}_{n}||_{2} = O_{\mathbf{P}^{\ast}_{n}(\cdot|Z^{n})}(d(n)/\sqrt{n}) $ wpa1-$\mathbf{P}$. By Lemma \ref{lem:GMM-1}, and since $E_{\mathbf{P}}[n^{-1}\sum_{i=1}^{n} ||g(X_{i},\theta_{0})||^{2}_{2}] = tr\{ \Omega \} = O(d(n))$, it follows by Lemma \ref{lem:prb-pr} that $||\sqrt{n} \bar{g}^{\ast}_{n} ||_{2} = O_{\mathbf{P}^{\ast}_{n}(\cdot|Z^{n})}(\sqrt{(d(n)})$. Thus $T^{\ast}_{1,n} = O_{\mathbf{P}^{\ast}_{n}(\cdot|Z^{n})}(\sqrt{d(n)} d(n)/\sqrt{n} \sqrt{d(n)}) =  O_{\mathbf{P}^{\ast}_{n}(\cdot|Z^{n})}(\sqrt{d(n)} \frac{d(n)^{3/2}}{\sqrt{n} })   $ wpa1-$\mathbf{P}$ since $\frac{d(n)^{3}}{ n } \rightarrow 0$ the result follows.

				Finally, by our assumption  $||\hat{\theta}^{\ast}_{GMM,n}-\theta_{0}||_{2} = O_{\mathbf{P}^{\ast}_{n}(\cdot|Z^{n})}(\sqrt{d(n)} n^{-1/2})$, Lemma \ref{lem:GMM-1}, and Assumption \ref{ass:example-W}, it follows that $T_{2,n} = O_{\mathbf{P}^{\ast}_{n}(\cdot|Z^{n})}(d(n)^{1/2}   \frac{d(n)^{5/2} }{n} ) =  o_{\mathbf{P}^{\ast}_{n}(\cdot|Z^{n})}(d(n)^{1/2})$ wpa1-$\mathbf{P}$ because $d(n)^{3}/n \rightarrow 0$.

		Therefore, we conclude that
		\begin{align*}
		n |(\hat{g}^{\ast}_{n})^{T} \hat{W}_{n} \hat{g}^{\ast}_{n} -  (\bar{g}^{\ast}_{n})^{T} W_{n} \bar{g}^{\ast}_{n}| = o_{\mathbf{P}^{\ast}_{n}(\cdot|Z^{n})}(\sqrt{d(n)}) 
		\end{align*}
		wpa1-$\mathbf{P}$.
		
		\medskip

		We now establish the result for $\hat{T}^{\ast}_{GEL,n}$. The proof for  $\hat{T}_{GEL,n}$ is completely analogous and therefore omitted. Abusing notation, we denote $\hat{g}^{\ast}_{n} \equiv n^{-1}\sum_{i=1}^{n} \omega_{in} g(X_{i},\hat{\theta}^{\ast}_{GEL,n})$. Let $s_{1}(\cdot)$ and $s_{2}(\cdot)$ denote the first and second derivatives of $s$. Define the following function\begin{align*}
		\lambda \mapsto F^{\ast}_{n}(\lambda) = s_{1}(0)  \lambda^{T}  \bar{g}^{\ast}_{n}  + 0.5 s_{2}(0) \lambda^{T} \Omega \lambda.	
		\end{align*}
		
		Since $s_{2}(0)<0$, the maximum of this function is achieved at $\lambda_{0} = - \frac{s_{1}(0)}{s_{2}(0)} \Omega^{-1} \bar{g}^{\ast}_{n}$ and $F^{\ast}_{n}(\lambda_{0}) = 0.5 \frac{(s_{1}(0))^{2}}{s_{2}(0)} (\bar{g}^{\ast}_{n})^{T} \Omega^{-1} \bar{g}^{\ast}_{n}$. By Lemma \ref{lem:GMM-1}(1) and the fact that $\Omega$ has eigenvalues uniformly bounded away from zero (Assumption \ref{ass:Omega}), $||\lambda_{0}||_{2} = O_{\mathbf{P}^{\ast}_{n}(\cdot|Z^{n})}(\sqrt{d(n)/n})$ wpa1-$\mathbf{P}$. Hence, $\lambda_{0} \in \Lambda(\hat{\theta}^{\ast}_{GEL,n})$  wpa1-$\mathbf{P}$ by Lemma \ref{lem:GEL-1}(1).
		
		By definition of $\hat{T}^{\ast}_{GEL,n} $ and the mean value Theorem
		\begin{equation*}
		\hat{T}^{\ast}_{GEL,n} \geq  2 \sum_{i=1}^{n} \left( s(\lambda^{T}\omega_{in} g(X_{i},\hat{\theta}^{\ast}_{GEL,n}) ) - s(0) \right) =  2 n F^{\ast}_{n}(\lambda) + n R^{\ast}_{n}(\hat{\theta}^{\ast}_{GEL,n},\lambda)
		\end{equation*}
		for all $ \lambda \in \Lambda(\hat{\theta}^{\ast}_{GEL,n})$ with $R^{\ast}_{n}$ defined in Lemma \ref{lem:GEL-1}.
		
		By Lemma \ref{lem:GEL-1}(2) with $\Delta_{n} = n^{-1/2} \sqrt{d(n)} $, it follows that $nR^{\ast}_{n}(\hat{\theta}^{\ast}_{GEL,n},\lambda_{0}) =  o_{\mathbf{P}^{\ast}_{n}(\cdot|Z^{n})}(d(n)^{1/2}) $ wpa1-$\mathbf{P}$ since $d(n)^{4}/n = o(1) $ by assumption. Moreover,  $\lambda_{0} \in \Lambda(\hat{\theta}^{\ast}_{GEL,n})$, so $\hat{T}^{\ast}_{GEL,n} \geq  2 n F^{\ast}_{n}(\lambda_{0})+ o_{\mathbf{P}^{\ast}_{n}(\cdot|Z^{n})}(d(n)^{1/2})$ wpa1-$\mathbf{P}$.

		By definition of $\lambda_{0}$ it also follows that $F^{\ast}_{n}(\lambda_{0}) \geq F^{\ast}_{n}(\hat{\lambda}^{\ast}_{n})$ (recall that $\hat{\lambda}^{\ast}_{n}$ is the maximizer of $\sum_{i=1}^{n} s(\lambda^{T} \omega_{in}g(X_{i},\hat{\theta}^{\ast}_{GEL,n}) ) $; see Assumption \ref{ass:lambda-GEL}).
		
		Therefore,
		\begin{align*}
		2n	F^{\ast}_{n}(\lambda_{0}) \geq 2nF^{\ast}_{n}(\hat{\lambda}^{\ast}_{n}) = \hat{T}^{\ast}_{GEL,n} - n R^{\ast}_{n}(\hat{\theta}^{\ast}_{GEL,n},\hat{\lambda}^{\ast}_{n}).
		\end{align*}
		Observe that, since $||\hat{\lambda}^{\ast}_{n}||_{2} = O_{\mathbf{P}^{\ast}_{n}(\cdot|Z^{n})}(\sqrt{d(n)/n})$ (by Assumption \ref{ass:lambda-GEL}), by Lemma \ref{lem:GEL-1}(2) with $\Delta_{n} = n^{-1/2} \sqrt{d(n)}$,  $nR^{\ast}_{n}(\hat{\theta}^{\ast}_{GEL,n},\hat{\lambda}^{\ast}_{n}) =  o_{\mathbf{P}^{\ast}_{n}(\cdot|Z^{n})}(\sqrt{d(n)}) $ wpa1-$\mathbf{P}$, and thus $2n	F^{\ast}_{n}(\lambda_{0}) \geq 2nF^{\ast}_{n}(\hat{\lambda}^{\ast}_{n}) \geq \hat{T}^{\ast}_{GEL,n} + o_{\mathbf{P}^{\ast}_{n}(\cdot|Z^{n})}(\sqrt{d(n)})$ wpa1-$\mathbf{P}$.
		
		Therefore, it follows that
		\begin{align*}
		\hat{T}^{\ast}_{GEL,n} = & 2nF^{\ast}_{n}(\lambda_{0}) + o_{\mathbf{P}^{\ast}_{n}(\cdot|Z^{n})}(d(n)^{1/2}) \\
		= & \frac{(s_{1}(0))^{2}}{s_{2}(0)} n(\bar{g}^{\ast}_{n})^{T} \Omega^{-1} \bar{g}^{\ast}_{n} + o_{\mathbf{P}^{\ast}_{n}(\cdot|Z^{n})}(\sqrt{d(n)})
		\end{align*}
		wpa1-$\mathbf{P}$.
		\end{proof}
		
		Throughout the proof, for any matrix $M$, let $||t||^{2}_{M} \equiv t' M t$. 
		
		\begin{proof}[Proof of Theorem \ref{thm:MST}]
			We only establish the result for the GMM estimator; the one for the GEL estimator is completely analogous. We divide the proof into several steps.\\
			
			\textsc{Step 1.} By Lemma \ref{lem:quad-approx}, for any $\varepsilon>0$,
			\begin{align*}
			\mathbf{P}^{\ast}_{n} \left( \frac{\hat{T}^{\ast}_{GMM,n}}{\sqrt{d(n)}} \geq t \mid Z^{n}  \right) \leq (\geq)  & \mathbf{P}^{\ast}_{n} \left( \frac{ \left \Vert n^{-1/2} \sum_{i=1}^{n} \omega_{in} g(X_{i},\theta_{0})   \right \Vert^{2}_{W} }{\sqrt{d(n)}} \geq t -(+) \varepsilon \mid Z^{n}  \right) \\
			& + o_{\mathbf{P}}(1)
			\end{align*}
			and similarly, 
			\begin{align*}
			\mathbf{P} \left( \frac{\hat{T}_{GMM,n}}{\sqrt{d(n)}} \geq t \right) \geq (\leq) & \mathbf{P} \left( \frac{ \left \Vert n^{-1/2} \sum_{i=1}^{n} g(X_{i},\theta_{0})   \right \Vert^{2}_{W} }{\sqrt{d(n)}} \geq t +(-) \varepsilon   \right) - o(1).
			\end{align*}
			
			\bigskip

			\textsc{Step 2.} We now verify Assumptions \ref{ass:boot-w} and \ref{ass:data-Z} for $Z_{i} \equiv W^{1/2} g(X_{i},\theta_{0})$. The former is directly imposed, so we only need to verify the latter. 
			
			Note that  $\Sigma_{n} = W^{1/2} E_{\mathbf{P}}[g(X,\theta_{0})g(X,\theta_{0})^{T}] W^{1/2} = W^{1/2} \Omega W^{1/2}$. Thus, Assumption \ref{ass:data-Z}(i), the first part, follows by the fact that $C^{-1} \leq \lambda_{l}(\Omega) \leq C$ for all $l=1,...,d(n)$ (Assumption \ref{ass:Omega}) and Assumption \ref{ass:example-W}. Regarding the second part of Assumption \ref{ass:data-Z}(i), note that under Assumption \ref{ass:example-1}(i), for any $l \leq 2(2+\gamma)$, 
			\begin{align*}
			E_{\mathbf{P}_{n}}[||Z_{1}||_{2}^{l}] = E_{\mathbf{P}}[||W^{1/2} g(X,\theta_{0})||_{2}^{l}] \precsim E_{\mathbf{P}}[||g(X,\theta_{0})||_{2}^{l}]~(by~Assumption~\ref{ass:example-W})
			\end{align*}
			and by Assumption \ref{ass:example-1}(i), $E_{\mathbf{P}_{n}}[||Z_{1}||_{2}^{l}] \precsim d(n)^{l/2} = d(n)^{l/2}$. Thus $E_{\mathbf{P}_{n}}[||Z_{1}||_{2}^{4}] \precsim d(n)^{2}$ and $(E_{\mathbf{P}_{n}}[||Z_{1}||_{2}^{3}] )^{2} \precsim d(n)^{3}$. Hence, the expression in  the second part of Assumption \ref{ass:data-Z}(i) is of order $d(n)^{4}/n$ which is $o(1)$ by assumption. 
			
		    Assumption \ref{ass:data-Z}(ii) follows because $E_{\mathbf{P}}[||g(X,\theta_{0})||^{2(2+\gamma)}_{2}] \precsim d(n)^{2+\gamma}$ and $\frac{d(n)^{4+2\gamma}}{n^{\gamma}} = \left( \frac{ d(n)^{2+4/\gamma}}{n} \right)^{\gamma} = o(1) $ by assumption. Finally, part (iii) of the Assumption \ref{ass:data-Z} follows with $\kappa = 0$ and $ \frac{d(n)^{4} }{n} = o(1) $.\\
			
%

\textsc{Step 3.} We now show that: For any $\varepsilon>0$, there exists a $T(\varepsilon)$ 	such that
\begin{align*}
\mathbf{P}\left(   \frac{ \left \Vert n^{-1/2} \sum_{i=1}^{n} g(X_{i},\theta_{0})   \right \Vert^{2}_{W} }{\sqrt{d(n)}}  \geq t + \varepsilon \right) \geq & \mathbf{P}_{n} \left( \frac{ \left \Vert n^{-1/2} \sum_{i=1}^{n} g(X_{i},\theta_{0})   \right \Vert^{2}_{W} }{\sqrt{d(n)}} \geq t - \varepsilon   \right) \\
& - \varepsilon - o(1)
\end{align*}
for all $ t \geq T(\varepsilon)$. 
			
			By the Expression \ref{eqn:asym-Gauss},\begin{align*}
				\mathbf{P} \left( \frac{ \left \Vert n^{-1/2} \sum_{i=1}^{n} g(X_{i},\theta_{0})   \right \Vert^{2}_{W} }{\sqrt{d(n)}} \geq t + \varepsilon   \right)  \geq&  \mathbf{P} \left( \frac{ \left \Vert  W^{1/2} \sqrt{n} \mathbb{V}_{n} \right \Vert^{2}_{2} }{\sqrt{d(n)}} \geq t + \varepsilon   \right) \\
				& - o(1)
			\end{align*} where $\sqrt{n} \mathbb{V}_{n} \sim N(0,\Omega)$. 
Under our assumptions $d(n) \asymp tr\{(W^{1/2}\Omega W^{1/2})^{2}\}$ and thus by Lemma  \ref{lem:anticoncentration-V} (and its Remark \ref{rem:anticon-V}), it follows that for sufficiently small $\varepsilon$, $\mathbf{P} \left( \frac{ \left \Vert  W^{1/2} \sqrt{n} \mathbb{V}_{n} \right \Vert^{2}_{2} }{\sqrt{d(n)}} \geq t + \varepsilon   \right)  \geq \mathbf{P} \left( \frac{ \left \Vert  W^{1/2} \sqrt{n} \mathbb{V}_{n} \right \Vert^{2}_{2} }{\sqrt{d(n)}} \geq t - \varepsilon   \right) - 0.5 \varepsilon$. Invoking again Expression \ref{eqn:asym-Gauss}, the desired result follows. 

\bigskip

	\textsc{Step 4.} For any $t \in \mathbb{R}$, 
		\begin{align*}
		& \mathbf{P}^{\ast}_{n} \left( \frac{\hat{T}^{\ast}_{GMM,n}}{\sqrt{d(n)}} \geq t \mid Z^{n}  \right)   -  \mathbf{P} \left( \frac{\hat{T}_{GMM,n}}{\sqrt{d(n)}}\geq t   \right)    \\
		\leq &   \mathbf{P}^{\ast}_{n} \left( \frac{ \left \Vert n^{-1/2} \sum_{i=1}^{n} \omega_{i,n} g(X_{i},\theta_{0})   \right \Vert^{2}_{W} }{\sqrt{d(n)}} \geq t - \varepsilon \mid Z^{n}  \right)   \\
		& -  \mathbf{P} \left( \frac{ \left \Vert n^{-1/2} \sum_{i=1}^{n} g(X_{i},\theta_{0})   \right \Vert^{2}_{W} }{\sqrt{d(n)}} \geq t + \varepsilon   \right)     + o_{\mathbf{P}}(1),~(by ~Step~1) \\
				\leq &  \mathbf{P}^{\ast}_{n} \left( \frac{ \left \Vert n^{-1/2} \sum_{i=1}^{n} \omega_{i,n} g(X_{i},\theta_{0})   \right \Vert^{2}_{W} }{\sqrt{d(n)}} \geq t - \varepsilon \mid Z^{n}  \right)  \\
				& -  \mathbf{P} \left( \frac{ \left \Vert n^{-1/2} \sum_{i=1}^{n} g(X_{i},\theta_{0})   \right \Vert^{2}_{W} }{\sqrt{d(n)}} \geq t - \varepsilon   \right)    - \varepsilon + o_{\mathbf{P}}(1),~(by ~Step~3).
		\end{align*}
		An analogous result holds for $-\left(\mathbf{P}^{\ast}_{n} \left( \frac{\hat{T}^{\ast}_{GMM,n}}{\sqrt{d(n)}} \geq t \mid Z^{n}  \right)   -  \mathbf{P} \left( \frac{\hat{T}_{GMM,n}}{\sqrt{d(n)}}\geq t   \right)     \right)$. 	Therefore, for any $\varepsilon>0$,
			{\small{\begin{align*}
			& \sup_{t \in \mathbb{R}} \left| \mathbf{P}^{\ast}_{n} \left( \frac{\hat{T}^{\ast}_{GMM,n}}{\sqrt{d(n)}} \geq t \mid Z^{n}  \right)   -  \mathbf{P} \left( \frac{\hat{T}_{GMM,n}}{\sqrt{d(n)}}\geq t   \right)     \right| \\
			\leq & \sup_{t \in \mathbb{R}} \left| \mathbf{P}^{\ast}_{n} \left( \frac{ \left \Vert n^{-1/2} \sum_{i=1}^{n} \omega_{i,n} g(X_{i},\theta_{0})   \right \Vert^{2}_{W} }{\sqrt{d(n)}} \geq t \mid Z^{n}  \right)   -  \mathbf{P} \left( \frac{ \left \Vert n^{-1/2} \sum_{i=1}^{n} g(X_{i},\theta_{0})   \right \Vert^{2}_{W} }{\sqrt{d(n)}} \geq t   \right)     \right| \\
			& + \varepsilon + o_{\mathbf{P}}(1) \\
			= & \sup_{t \in \mathbb{R}} \left| \mathbf{P}^{\ast}_{n} \left( \left \Vert n^{-1/2} \sum_{i=1}^{n} \omega_{i,n} g(X_{i},\theta_{0})   \right \Vert^{2}_{W} \geq t \mid Z^{n}  \right)   -  \mathbf{P} \left( \left \Vert n^{-1/2} \sum_{i=1}^{n} g(X_{i},\theta_{0})   \right \Vert^{2}_{W} \geq t   \right)     \right| \\
			& + \varepsilon + o_{\mathbf{P}}(1) 
			\end{align*}}}
		where the last line follows from the fact that $ \sqrt{d(n)} t \in \mathbb{R}$ for any $t \in \mathbb{R}$. The desired result thus follows from Theorem \ref{thm:boot} with $Z_{i} \equiv W^{1/2} g(X_{i},\theta_{0})$ for all $i=1,...,n$. 
		\end{proof}
		
		\subsection{Proofs of Lemmas \ref{lem:GMM-1}, \ref{lem:GEL-1} and \ref{lem:prb-pr}}
		
		\begin{proof}[Proof of Lemma \ref{lem:GMM-1}]
			
			(1) Note that\begin{align*}
			E_{\mathbf{P}^{\ast}_{n}(\cdot|Z^{n})}[|| \sqrt{n} \bar{g}^{\ast}_{n}||^{2}_{2}] = & tr\{ E_{\mathbf{P}^{\ast}_{n}(\cdot|Z^{n})}[ (n^{-1/2} \sum_{i=1}^{n} \omega_{in} g(X_{i},\theta_{0}) )(n^{-1/2} \sum_{i=1}^{n} \omega_{in} g(X_{i},\theta_{0}))^{T} ] \} \\
			= &  tr\{ n^{-1} \sum_{i=1}^{n} E_{\mathbf{P}^{\ast}_{n}(\cdot|Z^{n})}[ \omega_{in}^{2} ] g(X_{i},\theta_{0}) g(X_{i},\theta_{0})^{T} \}
			\end{align*}
			because under Assumption \ref{ass:boot-w}, the weights are centered and independent. Thus $E_{\mathbf{P}^{\ast}_{n}(\cdot|Z^{n})}[|| \sqrt{n} \bar{g}^{\ast}_{n}||^{2}_{2}] = n^{-1}\sum_{i=1}^{n} ||g(X_{i},\theta_{0})||^{2}_{2}$, and the desired result follows by the Markov inequality.\\
			
			(2) By the triangle inequality	
			\begin{align*}
			||\bar{G}^{\ast}_{n}(\theta)||_{2} \leq & ||n^{-1} \sum_{i=1}^{n} \omega_{in} \{\nabla_{\theta} g(X_{i}, \theta) - \nabla_{\theta} g(X_{i}, \theta_{0})\} ||_{2} \\
			& + n^{-1/2} ||n^{-1/2} \sum_{i=1}^{n} \omega_{in} \nabla_{\theta} g(X_{i}, \theta_{0}) ||_{2}\\
			\equiv & T_{1,n} + T_{2,n}
			\end{align*}
			where $\nabla_{\theta} g(X, \theta_{0}) \in \mathbb{R}^{d(n) \times q}$. Recall that for matrices, $||A||_{2}$ is the spectral norm. Let $||A|| \equiv tr\{ A^{T}A \}$; it is clear that $||A||_{2} \leq ||A||$. Moreover,
			\begin{align*}
			& ||n^{-1/2} \sum_{i=1}^{n} \omega_{in} \nabla_{\theta} g(X_{i}, \theta_{0}) ||^{2} \\
			= & tr \left\{ \left( n^{-1/2} \sum_{i=1}^{n} \omega_{in} \nabla_{\theta} g(X_{i}, \theta_{0}) \right)^{T} \left( n^{-1/2} \sum_{i=1}^{n} \omega_{in} \nabla_{\theta} g(X_{i}, \theta_{0}) \right)  \right\} \\
			= & tr \left\{ n^{-1} \sum_{i=1}^{n} \omega^{2}_{in} \left( \nabla_{\theta} g(X_{i}, \theta_{0}) \right)^{T}  \nabla_{\theta} g(X_{i}, \theta_{0}) \right\}\\
			& + tr \left\{ n^{-1} \sum_{i\ne j} \omega_{in}\omega_{jn} \left( \nabla_{\theta} g(X_{i}, \theta_{0}) \right)^{T}  \nabla_{\theta} g(X_{j}, \theta_{0}) \right\}.
			\end{align*}
			Applying $E_{\mathbf{P}^{\ast}_{n}(\cdot|Z^{n})}$ the second term in the RHS vanishes because of independence of the weights and zero mean. Thus, since $E_{\mathbf{P}^{\ast}_{n}(\cdot|Z^{n})}[\omega^{2}_{in}]=1$, it follows by the Markov inequality that \begin{align*}
			T_{2,n} =  O_{\mathbf{P}^{\ast}_{n}(\cdot|Z^{n})} \left( n^{-1/2} \sqrt{n^{-1} \sum_{i=1}^{n}  || \nabla_{\theta} g(X_{i}, \theta_{0})||^{2} }   \right).
			\end{align*}
			Also, note that $n^{-1} \sum_{i=1}^{n}  || \nabla_{\theta} g(X_{i}, \theta_{0})||^{2} = O_{\mathbf{P}}(d(n))$ by the Markov inequality, Assumption \ref{ass:example-1}(iii), and the fact that $||A|| \leq \sqrt{q} ||A||_{2}$. Therefore by Lemma \ref{lem:prb-pr}, $T_{2,n} =  O_{\mathbf{P}^{\ast}_{n}(\cdot|Z^{n})} \left( \sqrt{d(n)/n}   \right)$ wpa1-$\mathbf{P}$.
			
			Regarding $T_{1,n}$, note that $T_{1,n} \leq n^{-1} \sum_{i=1}^{n} |\omega_{in}| \times ||\nabla_{\theta} g(X_{i}, \theta) - \nabla_{\theta} g(X_{i}, \theta_{0}) ||_{2} $. Under Assumption \ref{ass:example-1}(iv),
			\begin{align*}
			T_{1,n} \leq n^{-1} \sum_{i=1}^{n} |\omega_{in}| \delta_{n}(X_{i})  || \theta  - \theta_{0} ||_{2} \leq n^{-1} \sum_{i=1}^{n} |\omega_{in}| \delta_{n}(X_{i})  \Delta_{n}. 
			\end{align*}
			Since weights are uniformly bounded, $T_{1,n} \precsim \Delta_{n} n^{-1} \sum_{i=1}^{n}  \delta_{n}(X_{i}) $ a.s-$\mathbf{P}^{\ast}_{n}$. Thus under Assumption \ref{ass:example-1}(iv), the Markov inequality and Lemma \ref{lem:prb-pr}, $T_{1,n} = O_{\mathbf{P}^{\ast}_{n}(\cdot|Z^{n})}(\Delta_{n} \sqrt{d(n)})$ wpa1-$\mathbf{P}$. 		
		\end{proof}
		
			\begin{proof}[Proof of Lemma \ref{lem:GEL-1}]
				
				(1) Observe that $|\lambda^{T} \omega_{in} g(X_{i},\theta)| \leq  || \lambda||_{2} |\omega_{in}| ||g(X_{i},\theta)||_{2} \precsim \sqrt{d(n)/n} |\omega_{in}| ||g(X_{i},\theta)||_{2}$. It suffices to show that
				\begin{align*}
					\sqrt{d(n)/n} \max_{i\leq n } |\omega_{in}| ||g(X_{i},\theta)||_{2} = o_{\mathbf{P}^{\ast}_{n}(\cdot|Z^{n})}(1)
				\end{align*}
				 wpa1-$\mathbf{P}$, uniformly in $\theta \in \mathcal{N}$. Since weights are uniformly bounded, it suffices to show that $\sqrt{d(n)/n}  \max_{i \leq n} \sup_{\theta \in \mathcal{N}} ||g(X_{i},\theta)||_{2} = o_{\mathbf{P}}(1)$. By the Markov inequality\begin{align*}
				 	\mathbf{P}( \max_{i\leq n } \sup_{\theta \in \mathcal{N}} ||g(X_{i},\theta)||_{2} \geq K_{n} ) \leq \frac{n}{K^{2\alpha}_{n}} E_{\mathbf{P}} [\sup_{\theta \in \mathcal{N}} ||g(X_{i},\theta)||^{2\alpha}_{2}] .
				 \end{align*}  Thus by Assumption \ref{ass:example-1}(i) and $K_{n} = n^{1/(2\alpha)} \sqrt{d(n)}$ it follows that\begin{align*}
				 	\sqrt{d(n)/n}  \max_{i \leq n} \sup_{\theta \in \mathcal{N}} ||g(X_{i},\theta)||_{2} \precsim \frac{d(n)}{n^{0.5(1-1/\alpha)}}
				 \end{align*} since $d(n)^{4}/n = o(1)$ and $\alpha \geq 2$ this implies the desired result.
				
				\medskip
				
				(2) It follows that \begin{align*}
				R^{\ast}_{n}(\theta,\lambda) \leq & ||\lambda ||^{2}_{2} \left \Vert \int_{0}^{1} n^{-1} \sum_{i=1}^{n} s_{2}(t\lambda^{T} \omega_{in}g(X_{i},\theta) )\omega^{2}_{in}g(X_{i},\theta)g(X_{i},\theta)^{T}  dt - s_{2}(0)\Omega \right \Vert_{e} \\
				\leq & ||\lambda ||^{2}_{2} s_{2}(0) \left \Vert n^{-1} \sum_{i=1}^{n} \omega^{2}_{in}g(X_{i},\theta)g(X_{i},\theta)^{T}  - \Omega \right \Vert_{e} \\
				& + ||\lambda ||^{2}_{2} \left \Vert n^{-1} \sum_{i=1}^{n} \int_{0}^{1} (s_{2}(t\lambda^{T} \omega_{in}g(X_{i},\theta)) - s_{2}(0) ) dt \omega^{2}_{in}g(X_{i},\theta)g(X_{i},\theta)^{T}  \right \Vert_{e}\\
				\leq & ||\lambda ||^{2}_{2} s_{2}(0) \left \Vert n^{-1} \sum_{i=1}^{n} \omega^{2}_{in} \{ g(X_{i},\theta)g(X_{i},\theta)^{T}  - g(X_{i},\theta_{0})g(X_{i},\theta_{0})^{T} \} \right \Vert_{e}\\
				& + ||\lambda ||^{2}_{2} s_{2}(0) \left \Vert n^{-1} \sum_{i=1}^{n} (\omega^{2}_{in} - 1)g(X_{i},\theta_{0})g(X_{i},\theta_{0})^{T}  \right \Vert_{e} \\
				& + ||\lambda ||^{2}_{2} s_{2}(0) \left \Vert n^{-1} \sum_{i=1}^{n} g(X_{i},\theta_{0})g(X_{i},\theta_{0})^{T}  - \Omega \right \Vert_{e} \\
				& + ||\lambda ||^{2}_{2} \left \Vert n^{-1} \sum_{i=1}^{n} \int_{0}^{1} (s_{2}(t\lambda^{T} \omega_{in}g(X_{i},\theta)) - s_{2}(0) ) dt \omega^{2}_{in}g(X_{i},\theta)g(X_{i},\theta)^{T}  \right \Vert_{e}\\
				\equiv & ||\lambda ||^{2}_{2} \{ s_{2}(0) (T_{1,n} + T_{2,n} + T_{3,n}) + T_{4,n}(\lambda) \}.			
				\end{align*}

				Regarding $T_{1,n}$, it is easy to see that
				\begin{align*}
				T_{1,n} = O_{\mathbf{P}^{\ast}_{n}(\cdot|Z^{n})} \left( n^{-1} \sum_{i=1}^{n} \left \Vert   g(X_{i},\theta)g(X_{i},\theta)^{T}  - g(X_{i},\theta_{0})g(X_{i},\theta_{0})^{T} \right \Vert_{e} \right).
				\end{align*}
				Hence, by Lemma \ref{lem:prb-pr} and after some algebra it follows that it suffices to show that $n^{-1} \sum_{i=1}^{n}|| g(X_{i},\theta) - g(X_{i},\theta_{0}) ||^{2}_{2} = O_{\mathbf{P}}(\Delta_{n}^{2} d(n))$ and\begin{align*}
				n^{-1} \sum_{i=1}^{n}|| g(X_{i},\theta) - g(X_{i},\theta_{0}) ||_{2}||g(X_{i},\theta_{0}) ||_{2} \leq & \sqrt{ n^{-1} \sum_{i=1}^{n}|| g(X_{i},\theta) - g(X_{i},\theta_{0}) ||^{2} _{e}} \\
				& \times \sqrt{ n^{-1} \sum_{i=1}^{n}|| g(X_{i},\theta_{0}) ||^{2}_{2} }\\
				= & O_{\mathbf{P}}( \Delta_{n} d(n)  ).
				\end{align*}
				These two results follow because under Assumption \ref{ass:example-1}(ii), $||g(X_{i},\theta) - g(X_{i},\theta_{0})||_{2} \leq \int_{0}^{1} ||\nabla_{\theta}g(X_{i},\theta_{0} + t(\theta-\theta_{0}))||_{2} dt  ||\theta - \theta_{0}||_{2} \leq \sup_{\theta \in \mathcal{N} } ||\nabla_{\theta}g(X_{i},\theta )||_{2} \Delta_{n}$. And under Assumption \ref{ass:example-1}(iii) and the Markov inequality,\\ $n^{-1} \sum_{i=1}^{n} \sup_{\theta \in \mathcal{N} } ||\nabla_{\theta}g(X_{i},\theta )||_{2} = O_{\mathbf{P}}(d(n)^{1/2})$. Finally, under Assumption \ref{ass:example-1}(i) and the Markov inequality, $n^{-1} \sum_{i=1}^{n} ||g(X_{i},\theta_{0})||^{2}_{2} = O_{\mathbf{P}}(d(n))$. Therefore $n ||\lambda||^{2}_{2} T_{1,n} = O_{\mathbf{P}^{\ast}_{n}(\cdot|Z^{n})} \left(  d(n)^{2} \Delta_{n}     \right)$ wpa1-$\mathbf{P}$.
				
				Regarding $T_{2,n}$ and $T_{3,n}$ it can be shown that are $O_{\mathbf{P}^{\ast}_{n}(\cdot|Z^{n})}(d(n)/\sqrt{n})$ wpa1-$\mathbf{P}$ and $O_{\mathbf{P}}(d(n)/\sqrt{n})$ resp.; the calculations are analogous to those in the proof of Lemma A.6 in DIN and thus omitted. It thus follows,	$n ||\lambda ||^{2}_{2} (T_{2,n} + T_{3,n})  = O_{\mathbf{P}^{\ast}_{n}(\cdot|Z^{n})}(\frac{d(n)^{2}}{\sqrt{n}}) = o_{\mathbf{P}^{\ast}_{n}(\cdot|Z^{n})}(\sqrt{d(n)}) $  wpa1-$\mathbf{P}$, since $ (d(n))^{3/2}/\sqrt{n} = o(1)$ by assumption.
				
				
				Regarding the term $T_{4,n}$, since $s_{2}$ is Lipschitz at 0, it follows that\begin{align*}
					|\int_{0}^{1} (s_{2}(t\lambda^{T} \omega_{in}g(X_{i},\theta)) - s_{2}(0) ) dt| \precsim |\lambda^{T} \omega_{in}g(X_{i},\theta)|
				\end{align*} for all $t \in [0,1]$. Therefore,
				\begin{align*}
				T_{4,n}(\lambda) \leq & \left \Vert n^{-1} \sum_{i=1}^{n} |\omega_{in}|^{3}  |\lambda^{T} g(X_{i},\theta)|  g(X_{i},\theta)g(X_{i},\theta)^{T}  \right \Vert_{e}\\
				\precsim &  \left \Vert n^{-1} \sum_{i=1}^{n}  |\lambda^{T} g(X_{i},\theta)|  g(X_{i},\theta)g(X_{i},\theta)^{T}  \right \Vert_{e}\\
				\leq &   n^{-1} \sum_{i=1}^{n}  |\lambda^{T} g(X_{i},\theta)| \left \Vert g(X_{i},\theta)g(X_{i},\theta)^{T}  \right \Vert_{e}\\
				\leq &  \sqrt{ \lambda^{T} n^{-1} \sum_{i=1}^{n}  g(X_{i},\theta) g(X_{i},\theta)^{T}  \lambda }  \sqrt{n^{-1}\sum_{i=1}^{n} \left \Vert g(X_{i},\theta)g(X_{i},\theta)^{T}  \right \Vert^{2}_{2}}
				\end{align*}
				where the second line follows from the weights being uniformly bounded. By analogous arguments to those in Lemma A.6 in DIN it can be shown that $\lambda_{max}(n^{-1} \sum_{i=1}^{n}  g(X_{i},\theta) g(X_{i},\theta)^{T} ) \leq C < \infty$ wpa1-$\mathbf{P}$ and thus\\ $\sqrt{ \lambda^{T} n^{-1} \sum_{i=1}^{n}  g(X_{i},\theta) g(X_{i},\theta)^{T}  \lambda }  \precsim \sqrt{\frac{d(n)}{n}}$ wpa1-$\mathbf{P}$. It follows that\\ $n^{-1}\sum_{i=1}^{n} \left \Vert g(X_{i},\theta)g(X_{i},\theta)^{T}  \right \Vert^{2}_{2} \leq n^{-1}\sum_{i=1}^{n} \left \Vert g(X_{i},\theta)  \right \Vert^{4}_{2} = O_{\mathbf{P}}(d(n)^{2})$ by Assumption \ref{ass:example-1}(i) (observe that $\theta \in \mathcal{N}$ eventually). Therefore, by Lemma \ref{lem:prb-pr}, $n ||\lambda ||^{2}_{2}  T_{4,n}(\lambda) = O_{\mathbf{P}^{\ast}_{n}(\cdot|Z^{n})}(d(n) \sqrt{\frac{d(n)}{n}} d(n) ) = O_{\mathbf{P}^{\ast}_{n}(\cdot|Z^{n})}(\sqrt{d(n)} \frac{d(n)^{2}}{\sqrt{n}} ) =o_{\mathbf{P}^{\ast}_{n}(\cdot|Z^{n})}(\sqrt{d(n)} )  $ since $\frac{d(n)^{2}}{\sqrt{n}}  = o(1)$ by assumption.				
			\end{proof}
			
				\begin{proof}[Proof of Lemma \ref{lem:prb-pr}]
					(1) We want to establish that for any $\epsilon>0$, there exists a $M=M(\epsilon)$ and $N(\epsilon)$ such that
					\begin{align*}
					\mathbf{P} \left( \mathbf{P}^{\ast}_{n} \left( |W_{n}| \geq c_{n} M  \mid Z^{n} \right) \leq \epsilon  \right) \geq 1 - \epsilon,~\forall n \geq N(\epsilon). 
					\end{align*}
					This is equivalent to establishing that $\mathbf{P} \left( \mathbf{P}^{\ast}_{n} \left( |W_{n}| \geq c_{n} M  \mid Z^{n} \right) \geq \epsilon  \right) \leq \epsilon $. Let $A_{n} \equiv \{ Z^{n} : \mathbf{P}^{\ast}_{n} \left( |W_{n}| \geq c_{n} M  \mid Z^{n} \right) \geq \epsilon   \}$ and $B_{n} \equiv \{ X_{n} : |X_{n}| \leq \sqrt{M} c_{n}  \}$. Given $X_{n} \in B_{n}$, then $\{ W_{n} :  |W_{n}| \geq c_{n} M  \} \subseteq \{ W_{n} :  |W_{n}| \geq |X_{n}| \sqrt{M}  \}$, therefore \begin{align*}
					\mathbf{P}(A_{n}) \leq \mathbf{P}_{n}(A_{n} \cap B_{n}) + \mathbf{P}(B^{C}_{n}) \leq & \mathbf{P} \left( \mathbf{P}^{\ast}_{n} \left( |W_{n}| \geq |X_{n}| \sqrt{M}  \mid Z^{n} \right) \geq \epsilon  \right) \\
					& + \mathbf{P}(\{X_{n} : |X_{n}| \geq \sqrt{M} c_{n}\}). 
					\end{align*}
					Since $W_{n} = O_{\mathbf{P}^{\ast}_{n}(\cdot|Z^{n})}(|X_{n}|)$, the first term in the RHS can be made less than $\epsilon$ for sufficiently large $M$; similarly since $X_{n} = O_{\mathbf{P}}(c_{n})$ the second term can also be made arbitrary small. 
					
					\smallskip
					
					(2) The proof for this result is analogous to (1) and thus omitted.
				\end{proof}	
	
	\section{Proof of Proposition \ref{pro:Wald}}
	\label{app:discussion}
	
	\begin{proof}[Proof of Proposition \ref{pro:Wald}]
		
		By assumption over $V_{n}$, it follows that $\mathbb{W}_{n}(P_{n},\mathbf{P}) =  \left \Vert   c(\theta_{P_{n}}) - c(\theta_{\mathbf{P}})  \right \Vert^{2}_{V} (1 + o_{\mathbf{P}}(1)) $. Henceforth in the proof, we abuse notation and use $\mathbb{W}_{n}(P_{n},\mathbf{P}) $ to denote $\left \Vert   c(\theta_{P_{n}}) - c(\theta_{\mathbf{P}})  \right \Vert^{2}_{V} $. 
		
		Let $a_{n} \equiv 1 + \sqrt{n} || c(\theta_{P_{n}}) - c(\theta_{\mathbf{P}})  ||_{2} $. Under the null $c(\theta_{\mathbf{P}}) = 0$, the representation \ref{eqn:W-LAR} and our assumption over eigenvalues of $V$, it follows that $a_{n} \precsim 1 + \sqrt{\mathbb{W}_{n}(P_{n},\mathbf{P})}$ and 
		\begin{align*}
		\left \Vert \sqrt{n} (c(\theta_{P_{n}}) - c(\theta_{\mathbf{P}}) )  - \sqrt{n} E_{P_{n}}[\psi(X,\theta_{\mathbf{P}})]  \right \Vert_{V} = o_{\mathbf{P}}( 1 +  \sqrt{\mathbb{W}_{n}(P_{n},\mathbf{P})} ).
		\end{align*}
		
		Note that for any $x$ and $y$, $||x-y|| = o(1+||y||)$, implies $| ||x|| - ||y|| | \leq o(1+||y||)$ and thus $||y||(1+o(1)) \leq ||x|| +o(1)$ and $||x|| \leq ||y||(1+o(1)) + o(1)$. Thus applying this to $x= \sqrt{n} E_{P_{n}}[\psi(X,\theta_{\mathbf{P}})]  $ and $y = \sqrt{n} (c(\theta_{P_{n}}) - c(\theta_{\mathbf{P}}) )  $, it follows that
		\begin{align*}
		\sqrt{\mathbb{W}_{n}(P_{n},\mathbf{P})} (1+ o_{\mathbf{P}}(1) ) \leq \left \Vert \sqrt{n} E_{P_{n}}[\psi(X,\theta_{\mathbf{P}})] \right \Vert _{V } + o_{\mathbf{P}}(1)
		\end{align*}
		and 
		\begin{align*}
		\sqrt{\mathbb{W}_{n}(P_{n},\mathbf{P})} (1+ o_{\mathbf{P}}(1) ) \geq \left \Vert \sqrt{n} E_{P_{n}}[\psi(X,\theta_{\mathbf{P}})] \right \Vert _{V} - o_{\mathbf{P}}(1).
		\end{align*}
		
		Therefore, \begin{align*}
		\mathbb{W}_{n}(P_{n},\mathbf{P}) (1+ o_{\mathbf{P}}(1) ) = \left \Vert \sqrt{n} E_{P_{n}}[\psi(X,\theta_{\mathbf{P}})] \right \Vert _{V}^{2} + o_{\mathbf{P}}(1 + \left \Vert \sqrt{n} E_{P_{n}}[\psi(X,\theta_{\mathbf{P}})] \right \Vert _{V } ).
		\end{align*}
		
		It thus remains to show that $\left \Vert \sqrt{n} E_{P_{n}}[\psi(X,\theta_{\mathbf{P}})] \right \Vert _{V } \precsim \left \Vert \sqrt{n} E_{P_{n}}[\psi(X,\theta_{\mathbf{P}})] \right \Vert _{2} = O_{\mathbf{P}}(\sqrt{d(n)})$. Note that $E[\left \Vert E_{P_{n}}[\psi(X,\theta_{\mathbf{P}})] \right \Vert^{2} _{2} ] = \sum_{j=1}^{d(n)} 
		E \left[\left( E_{P_{n}}[ \psi_{[j]} (X,\theta_{\mathbf{P}})] \right)^{2} \right]$, and 
		\begin{align*}
		E \left[ \left( n^{-1} \sum_{i=1}^{n} \psi_{[j]}(X_{i},\theta_{\mathbf{P}})  \right)^{2}  \right]  = & E \left[  n^{-2} \sum_{i=1}^{n} \left( \psi_{[j]}(X_{i},\theta_{\mathbf{P}})  \right)^{2}  \right] \\
		\leq & n^{-1}  E \left[  | \psi_{[j]}(X,\theta_{\mathbf{P}}) |^{2}  \right] \\
		\precsim & n^{-1} 
		\end{align*}
		where the first equality follows because \begin{align*}
		E \left[   \psi_{[j]}(X_{i},\theta_{\mathbf{P}}) \psi_{[j]}(X_{l},\theta_{\mathbf{P}}) \right] = E \left[   \psi_{[j]}(X_{i},\theta_{\mathbf{P}})  \right] E \left[   \psi_{[j]}(X_{l},\theta_{\mathbf{P}})  \right] = 0.
		\end{align*} Thus, by the Markov inequality the result follows.

		The proof of the representation for $\mathbb{W}_{n}(P_{n}^{\ast}, P_{n})$ is analogous and omitted; it is worth pointing out, however, that for this the null hypothesis is not imposed.
	\end{proof}

\end{document}